\newtheorem{theorem}{Theorem}
\newtheorem{lemma}[theorem]{Lemma}
\newtheorem{corollary}[theorem]{Corollary}
\newtheorem{definition}[theorem]{Definition}
\newtheorem{proposition}[theorem]{Proposition}
\newtheorem{remark}[theorem]{Remark}
\newtheorem{notation}[theorem]{Notation}
\newtheorem{example}[theorem]{Example}
\numberwithin{theorem}{section}
\numberwithin{equation}{section}
\begin{document}

\title[Nonuniform hyperbolicity and asymptotic behavior]{New notion of nonuniform exponential dichotomy with applications to the theory of pullback and forward attractors}

\author[J. A. Langa]{Jos\'e A. Langa$^1$}
\thanks{$^1$ Departamento de Ecuaciones Diferenciales y An\'alisis Num\'erico, Universidad de Sevilla, Campus Reina Merdeces, 41012, Sevilla, Spain.}
\email{langa@us.es}

\author[R. Obaya]{Rafael Obaya$^2$}
\thanks{$^2$ Departamento de Matem\'atica Aplicada, E. Ingenier\'ias Industriales, Universidad
	de Valladolid, 47011, Valladolid, Spain, and member of IMUVA, Instituto de
	Investigaci\'on en 
	Matem\'aticas, Universidad de Valladolid, Spain.}
\email{rafoba@wmatem.eis.uva.es}

\author[A. N. Oliveira-Sousa]{Alexandre N. Oliveira-Sousa$^{1,3}$}
\thanks{$^3$ Instituto de Ci\^encias Matem\'aticas e de Computa\c c\~ao, Universidade de S\~ao Paulo, Brasil.}
\email{alexandrenosousa@gmail.com}

\subjclass[2020]{Primary: 35B40, 35B41, 34D09, 34D45, 37L45, 35K91, 37C65}


\keywords{evolution processes; nonuniform exponential dichotomies; pullback and forward attractors; comparison for ODEs and PDEs}


\begin{abstract}
	In this work we study nonuniform exponential dichotomies and existence of pullback and forward attractors for evolution processes associated to nonautonomous differential equations. We define a new concept of nonuniform exponential dichotomy, for which we 
	provide several examples, study the relation with the standard notion, and establish a robustness under perturbations. 
	We provide a dynamical interpretation of admissibility pairs related with exponential dichotomies
	to obtain existence of pullback and forward attractors.
	We apply these abstract results for ordinary and parabolic differential equations. 
\end{abstract}

\maketitle

\section{Introduction}


\par In the past decades the theory of asymptotic behavior for nonautonomous differential equations has been widely developed, see for instance \cite{Bortolan-Carvalho-Langa,Bortolan-Carvalho-Langa-book,Arrieta-Crvalho-Rodriguez-00,Barreira-Dragicevi-Valls,Barreira-Valls-Sta,Caraballo-Carvalho-Langa-Sousa,Caraballo-Langa-Obaya-Sanz,Carvalho-Langa-Robison-book} and the references therein. 
This theory studies the dynamical systems generated be the solutions of these differential equations. 
Under sensible conditions all the interesting dynamics generated by its solutions are located in a neighborhood of an \textit{attractor}, which is a compact object that attracts bounded subsets of the phase space, see \cite{Bortolan-Carvalho-Langa-book,Carvalho-Langa-Robison-book,Chepyzhov-Vishik}. 
Another important notion in dynamical systems is \textit{hyperbolicity} 
which means that there exist two main directions that dictates the dynamics: one expanding exponentially and another contracting exponentially, 
see \cite{Barreira-Dragicevi-Valls,Barreira-Valls-Robustness-noninvertible,Bortolan-Carvalho-Langa,Caraballo-Kloeden-Real,Caraballo-Langa-Obaya-Sanz,Chow-Leiva-existence-roughness,Chow-Leiva-existence-unbounded,Zhou-Lu-Zhang-1}. 
One of the reason that these concepts are important is that both are robust under perturbation, see \cite{Bortolan-Carvalho-Langa,Bortolan-Carvalho-Langa-book,Carvalho-Langa-Robinson,Carvalho-Langa-2} for attractors and \cite{Barreira-Dragicevi-Valls,Barreira-Valls-Nonuniform,Barreira-Valls-R,Barreira-Valls-Sta,Henry-1} for hyperbolicity.
In this work we study exponential dichotomies and provide applications to the study of attractors for nonautonomous dynamical systems. 
We propose a new concept of nonuniform exponential dichotomy and study the relation between admissible pairs and the existence of pullback and forward attractors for nonautonomous differential equations.

\par For nonautonomous dynamical systems the appropriated notion of hyperbolicity
is given by an \textit{exponential dichotomy}. Let $A(t)$ be a time dependent linear operator (possibly unbounded), so the differential equation
\begin{equation}
\dot{u}=A(t)u
\end{equation}
is associated with a linear evolution process $\mathcal{T}=\{T(t,s): t\geq s\}$. 
We say that $\mathcal{T}$ admits a \textbf{nonuniform exponential dichotomy} if there exists a family of projections $\{\Pi^u(t): t\in \mathbb{R}\}$
such that $T(t,s)\Pi^u(s)=\Pi^u(t)T(t,s)$ (invariance), $T(t,s): R(\Pi^u(s))\to R(\Pi^u(t))$ is an isomorphism, and 
\begin{equation}\label{eq-int-NEDI}
\begin{split}
&\|T(t,s)\Pi^s(s)\|_{\mathcal{L}(X)}\leq M(s) e^{-\omega(t-s)}, 
\ \ t\geq s\\
&\|T(t,s)\Pi^u(s)\|_{\mathcal{L}(X)}\leq M(s) e^{\omega(t-s)}, 
\ \ t< s,
\end{split}
\end{equation}
where $\Pi^s:=Id_X-\Pi^u$ and $M:\mathbb{R}\to [1,+\infty)$ with some growth of order $e^{\upsilon|s|}$, $\upsilon>0$. If the function $M$ is bounded, we say that $\mathcal{T}$ admits an (uniform) \textit{exponential dichotomy}, see 
\cite{Carvalho-Langa-Robison-book,Henry-1} for the uniform case and
\cite{Barreira-Valls-Sta,Caraballo-Carvalho-Langa-Sousa,Zhou-Zhang,Zhou-Lu-Zhang-1} for the nonuniform. 
Many of these works prove that exponential dichotomies are robust under perturbation, fact that is crucial to study attractors under perturbation, see \cite{Bortolan-Carvalho-Langa-book,Bortolan-Carvalho-Langa,Carvalho-Langa-2}. In this work 
we explore another type of application of exponential dichotomies to the theory of attractors inspired by Longo \textit{et al.} \cite[Section 5]{Longo-Novo-Obaya}.

\par First, we study a new type of nonuniform exponential dichotomy. We consider a linear evolution process 
$\{T(t,s): t\geq s\}$ satisfying all 
the conditions to admit a nonuniform exponential dichotomy except that \eqref{eq-int-NEDI} is modified to
\begin{equation}\label{eq-int-NEDII}
\begin{split}
\|T(t,s)\Pi^s(s)\|_{\mathcal{L}(X)}\leq& M({\color{blue}t}) e^{-\omega(t-s)}, 
\ \ t\geq s\\
\|T(t,s)\Pi^u(s)\|_{\mathcal{L}(X)}\leq& M({\color{blue}t}) e^{\omega(t-s)}, 
\ \ t< s.
\end{split}
\end{equation}
This means that the bound $M$ depends on the final time $t$ instead of the initial time $s$. 
When the nonuniform hyperbolicity is expressed by \eqref{eq-int-NEDII} we refer to this notion as \textbf{nonuniform exponential dichotomy of type II} (or simply \textbf{NEDII}), and the standard one, when \eqref{eq-int-NEDI} holds true, as \textbf{nonuniform exponential dichotomy of type I} (\textbf{NEDI}). 
We prove that a NEDII is a different concept of nonuniform hyperbolicity. 
In fact, we provide several examples of evolution processes 
that admits NEDII and does not admit any NEDI. 
We also show that NEDI and NEDII are complementary notions of nonuniform hyperbolicity, since
under certain conditions it is possible to relate them.
For instance, if a linear 
evolution process admits a NEDI, it is expected that the \textit{dual} evolution process admits a NEDII, and vice-versa. 
In Barreira and Valls \cite{Barreira-Valls-Sta}, they use this type of relation to obtain results for NEDI associated with invertible evolution processes. 
Another relation is that NEDI and NEDII are complementary in half lines, $\mathbb{R}^+$ or $\mathbb{R}^-$,
with one being more general than the other depending in which half-line they are defined. 
This simple relation allows us to determine which nonuniform exponential dichotomy is the ``optimal'' one in each half line.
\par The dual correspondence between NEDI and NEDII allow us to establish a robustness result for NEDII. Inspired by Caraballo \textit{et al.} \cite[Theorem 3.11]{Caraballo-Carvalho-Langa-Sousa}, we provide conditions to obtain that NEDII persists under perturbation. This fact guarantees that NEDII is a reasonable notion of nonuniform hyperbolicity.
Furthermore, the robustness result for NEDII can be applied even in situations where we do not know if NEDI is robust under perturbation, see Example \ref{example-NEDII_w2>y2_w1<y1}. 
Therefore, one of our goals is to show that NEDII is a sensible concept and that the study of NEDII leads to a better comprehension of the notion of nonuniform hyperbolicity. 
\par On the other hand, we will use the concept of \textit{admissible pairs} for 
a non-homogeneous differential equation,
\begin{equation}\label{eq-int-non-homogeneous-equation}
\dot{u}=A(t)u+b(t),\ \ t\in \mathbb{R},
\end{equation}
to study existence of attractors for an evolution process $\{S(t,s): t\geq s\}$ associated with
a nonautonomous differential equations in a Banach space $X$
\begin{equation}\label{eq-int-nonautonomous-nonlinear-equation}
\dot{u}=f(t,u), \ \ t\geq s, \ \ u(s)=u_0\in X.
\end{equation}
A pair of Banach spaces $(\mathfrak{Y},\mathfrak{X})$ is said to be \textit{admissible} for
\eqref{eq-int-non-homogeneous-equation} if for each $b\in \mathfrak{Y}$ there is a solution $u\in \mathfrak{X}$, see \cite{Zhou-Lu-Zhang-16,Latushkin-Timothy-Schnaubelt-98,Henry-1,Zhou-Lu-Zhang-16,Zhou-Zhang}.
Admissible pairs are strongly related with exponential dichotomies, very important to characterize existence of exponential dichotomies, and to prove robustness results, see for instance \cite{Barreira-Dragicevi-Valls,Henry-1,Latushkin-Timothy-Schnaubelt-98,Zhou-Lu-Zhang-16,Zhou-Zhang,Zhou-Lu-Zhang-1}. 
Differently of these works, we propose to apply admissible pairs associated with a nonuniform exponential dichotomy to compute the size of attractors.
In Zhou \textit{et al.} \cite{Zhou-Zhang}, they study families of admissible pairs associated with a real parameter and use these admissible pairs to study robustness of exponential dichotomies.
Inspired by
\cite{Zhou-Zhang}, we present admissible pairs that relates the non-homogeneous term $b$ in \eqref{eq-int-non-homogeneous-equation} with the size of the \textit{attractors} of  \eqref{eq-int-nonautonomous-nonlinear-equation}, using a comparison similar to \eqref{eq-int-dissipativeness-a-cte}.


For the evolution process $\mathcal{S}=\{S(t,s):t\geq s\}$ associated to \eqref{eq-int-nonautonomous-nonlinear-equation} it is possible to associate two notions of attraction: \textit{pullback} and 
 \textit{forward}.
A family of compact sets $\widehat{\mathcal{A}}=\{\mathcal{A}(t): t\in \mathbb{R} \}$ is a \textbf{pullback attractor} for $\mathcal{S}$ if $\widehat{\mathcal{A}}$ is invariant, i.e., $S(t,\tau)\mathcal{A}(\tau)=\mathcal{A}(t)$, for  $t\geq s$, $\widehat{\mathcal{A}}$ pullback attracts bounded sets, i.e., for each bounded set $B\subset X$ and $t\in \mathbb{R}$
\begin{equation}
\lim_{s\to-\infty} dist_H(S(t,s)B, \mathcal{A}(t))=0,
\end{equation}
where $dist_H$ is the Hausdorff semi-distance, see \cite{Carvalho-Langa-Robison-book,Caraballo-Lukaszewicz-Real}.
Similarly, there is a notion of forward attraction.  Let 
$\widehat{D}=\{D(t):t\in\mathbb{R}\}$ be a family of nonempty sets and $B\subset X$, then 
$\widehat{D}$ forward attracts $B$ if for each $s\in \mathbb{R}$, there exists 
\begin{equation}
\lim_{t\to+\infty} dist_H(S(t,s)B, D(t))=0.
\end{equation} 
Then a \textbf{forward attractor} $\mathcal{A}_F$ is a compact set such that is forward attracting, see \cite{Chepyzhov-Vishik,Caraballo-Real-04,Cui-Kloeden-18}. 
Moreover, instead of pullback attracting only bounded sets, the pullback attractor can actually attracts elements of a ``larger'' class of subsets, usually called \textit{universe}, se for instance \cite{Caraballo-Kloeden-Real,Caraballo-Lukaszewicz-Real,Carvalho-Langa-Robison-book,Marin-Real}. For us, an \textbf{universe} $\mathcal{M}$ is a class of all nonempty family of subsets of $X$ that is closed by inclusion, i.e., 
if $\widehat{A}$ and $\widehat{B}$ are families of nonempty subsets of $X$ such that $\widehat{A}\subset \widehat{B}$ (inclusion set by set) and $\widehat{B}\in \mathcal{M}$, then 
$\widehat{A}$ also belongs to $\mathcal{M}$.
For instance, an universe can be a class of functions that grows exponentially, see
\cite{Garcia-Luengo-Marin-Real-2013,Garcia-Luengo-Marin-Real-2012,Lukaszewicz-10}.
A typical condition towards existence of attractors is that the vector field is dissipative, which means that the solutions will be ``absorb'' by a bounded set of the phase space in future time. 
For instance, if $X=\mathbb{R}^N$, $a>0$, $b$ is a bounded real function and 
$f$ satisfies 
\begin{equation}\label{eq-int-dissipativeness-a-cte}
2\langle f(t,u), u\rangle_{\mathbb{R}^N} \leq -a|u|_{\mathbb{R}^N}+b(t), \ \ u\in \mathbb{R}^N.
\end{equation} 
Note that \eqref{eq-int-dissipativeness-a-cte} can be seen as a comparison between \eqref{eq-int-nonautonomous-nonlinear-equation} and the scalar ODE $\dot{x}=-ax+b$ and this leads towards the existence of attractors for \eqref{eq-int-nonautonomous-nonlinear-equation}.
This same approach can be applied for several differential equations, for instance:
 Caratheodory ODEs 
\cite{Longo-Novo-Obaya}, Equations with delays \cite{Caraballo-Langa-Robinson-01}, Heat equations 
\cite{Arrieta-Crvalho-Rodriguez-00}, Navier-Stokes equations 
\cite{Caraballo-Real-04,Garcia-Luengo-Marin-Real-2012,Garcia-Luengo-Marin-Real-2013}, and Damped wave equations \cite{Caraballo-Kloeden-Real}.
\par In this work
we provide conditions inspired in \eqref{eq-int-dissipativeness-a-cte} that allow us to obtain existence of attractors for nonautonomous ordinary and partial differential equations.
 Recently, in Longo \textit{et al.} \cite{Longo-Novo-Obaya} the authors provide conditions like \eqref{eq-int-dissipativeness-a-cte} to obtain attractors associated with skew product semi-flows for nonautonomous Caratheodory ODEs. 
This is done by estimating the size of the solutions of\eqref{eq-int-nonautonomous-nonlinear-equation} by the solutions of the ODE $\dot{x}=a(t)x+b(t)$, according to \eqref{eq-int-dissipativeness-a-cte} and assuming that where the linear part $\dot{x}=a(t)x$ admits some exponential decay given by an \textit{uniform hyperbolicity}.
In this paper we consider the case where this ``exponential decay'' can be nonuniform and we compute the size of the solutions of \eqref{eq-int-nonautonomous-nonlinear-equation} for each possible nonautonomous term $b$ in 
\eqref{eq-int-dissipativeness-a-cte}. 
This analysis is inspired by \textit{admissibility pairs} associated with an \textit{nonuniform exponential dichotomy}, see \cite{Zhou-Zhang}.

\par Consequently, we provide new conditions for existence of forward and pullback attractors from admissible pairs and nonuniform exponential dichotomies.
The idea is to compare the vector field \eqref{eq-int-nonautonomous-nonlinear-equation} with 
\eqref{eq-int-non-homogeneous-equation}, and assume that the linear part of \eqref{eq-int-non-homogeneous-equation} admits a nonuniform exponential dichotomy of type II with bound $M(t)=Me^{\delta|t|}$ and exponent $\alpha>0$, and $b$ belongs to a space of continuous functions with growth 
$e^{\lambda\delta|t|}$, $\lambda\in \mathbb{R}$. 
Then we prove that 
for each universe of families that ``grows'' with order $e^{\gamma|t|}$, $\gamma>0$,
there is a pullback attractor $\{\mathcal{A}_\gamma(t): t\in \mathbb{R}\}$ for
\eqref{eq-int-nonautonomous-nonlinear-equation}
 such that 
$\mathcal{A}_\gamma(t)$ is contained in a ball centered at $0$ with radius $R(t)$ of order $e^{(\lambda+1)\delta|t|}$. 
In particular, if $b$ is exponentially small ($\lambda =-1$) $e^{-\delta|t|}$ the corresponding pullback attractor is uniformly bounded, i.e., $\cup_{t\in \mathbb{R}}\mathcal{A}_\gamma(t)$ is bounded. And if 
$b$ is bounded ($\lambda=0$), it is possible see that the size of the pullback attractor grows as 
$e^{\delta|t|}$.
This relation on the admissible pairs $(\lambda \delta, (\lambda+1)\delta)$ is expected, since the same type of relation is noticed in the pairs of admissibility of \cite{Zhou-Zhang} for difference equations.
The same argument is applied to obtain families of compact sets that forward attracts bounded subsets of $X$. 
Then, we apply these results to ODEs (Section \ref{sec-app-odes}) and to parabolic PDEs (Section \ref{sec-application-pdes}).

\par In Section \ref{sec-ned} we define nonuniform exponential dichotomies of type II (NEDII) for evolution processes. 
We prove a simple result that relates NEDII with the standard one (NEDI) in each half-line ($\mathbb{R}^+$ and $\mathbb{R}^-$). In Subsection \ref{subsec-examples_NEDII} we provide several examples of scalar ODEs that admits NEDII, some of them do not admit any NEDI. Later, these examples are going to be used to obtain existence of nonuniform exponential dichotomies for parabolic PDEs, see Subsection \ref{subsec-existence-NEDII}. 
Finally, we study nonuniform exponential dichotomies for
invertible evolution processes in Subsection \ref{subsec-NED-invertile-ep}, and provide a dual correspondence between NEDI and NEDII. As an application of this result, we establish a robustness result for NEDII. 
\par In Section \ref{sec-existence-attractors} we present a preliminary on the the theory of forward and pullback attractors for evolution processes. The results on existence of attractors rely on strong conditions which are suitable for our applications.  
In Subsection \ref{subsec-forward-attraction} we present a result on the existence of forward attractors for evolution processes. In Subsection \ref{subsec-existence-pullback-attractors} we recall some important notions of pullback attraction for universes. We present a theorem on the existence and uniqueness of pullback attractors that not necessarily belongs to universe that it attracts, which differs from the standard theory of pullback attractors \cite{Carvalho-Langa-Robison-book}, see Remark \ref{remark-pullback-attractors} for details. 

\par In Section \ref{sec-app-odes} we study forward and pullback attractors for nonautonomous ODEs. We apply the admissilibity pair analysis for this case in order to obtain
estimates for the attractors.
All the important ideas are presented in Subsection \ref{subssec-comparison-scalar-odes} where we compare an ODE in $\mathbb{R}^N$ with an scalar ODE. 
For each universe we obtain a corresponding pullback attractor and we also provide conditions to set if they are equal or not. Then, in Subsection \ref{subssec-comparison-systems-odes} we compare our equation with a systems of cooperative ODES and apply the same techniques of the previous subsection. 
\par Finally, in Section \ref{sec-application-pdes} we study the results of the previous sections in an infinite dimensional setting. The goal is to show how the techniques for ODEs can be extended to parabolic PDEs with general boundary conditions: Neumann, Dirichlet and Robin. In Subsection \ref{subssec-comparison-scalar-pdes} we prove existence of pullback attractors and a uniformly bounded family of compact sets that forward attracts every bounded subset of the phase space. 
In Subsection \ref{subsec-existence-NEDII} we provide conditions to obtain examples of evolution processes that admits nonuniform exponential dichotomies. We consider a skew product semiflow that admits a continuous separation (\cite{Polacik-Terescak,Mierczynski-Shen}), which  
 allows us to transfer information from a 1-dimensional ODE to an infinite dimensional one. In this way, for each example of Subsection \ref{subsec-examples_NEDII} we provide a parabolic PDE such that the associated evolution process admits the same type of nonuniform exponential dichotomy of its associated scalar ODE. We conclude with a correspondence between NEDI and NEDII using a parabolic PDE and its adjoint problem similar to the invertible case presented in Subsection \ref{subsec-NED-invertile-ep}.

%
%
%


\section{Nonuniform exponential dichotomies}
\label{sec-ned}
\par In this section we study two definitions of nonuniform exponential dichotomies
for a linear evolution process in a Banach space $X$. 
We propose a new type of nonuniform exponential dichotomy, and study the relations between this new concept and the standard one. 
We also provide several examples and study robustness under perturbation.


\subsection{Preliminaries}
\par We first recall the definition of \textit{evolution process} over a metric space $(X,d)$ with parameters in an interval
$\mathbb{J}=\mathbb{R}, \mathbb{J}=\mathbb{R}^+:=\{t\in \mathbb{R}: t\geq 0\}$ or
$\mathbb{J}=\mathbb{R}^-:=\{t\in \mathbb{R}: t\leq 0\}$.
\begin{definition}
	Let $\mathcal{S}=\{S(t,s)\, ; \, t\geq s, \, t,s\in \mathbb{J} \}$ be a family continuous operators in a metric space $(X,d)$.
	We say that $\mathcal{S}$ is an \textbf{evolution process} in $X$ if
	\begin{enumerate}
		\item $S(t,t)=Id_X$, for all $t\in \mathbb{J}$;
		\item $S(t,s)S(s,\tau)=S(t,\tau)$, for $t\geq s\geq \tau$;
		\item $\{(t,s)\in \mathbb{J}^2 ; \, t\geq s\}\times X\ni (t,s,x)\mapsto S(t,s)x\in X$ is continuous.
	\end{enumerate}
	\par If additionally, the operator $S(t,s)$ is invertible for all $t\geq s$, then we say that $\mathcal{S}$ is an \textbf{invertible evolution process}. In this situation we write
	$\mathcal{S}=\{S(t,s): t,s\in \mathbb{J}\}$, where $S(s,t)$ is the  inverse of $S(t,s)$, for $t\geq s$.
\end{definition}
%

\par The following definition is the standard \textit{nonuniform exponential dichotomy} for linear evolution processes in a Banach space $(X,\|\cdot\|_X)$.
\begin{definition}\label{def-nonuniform-exp-dichotomy-1}
	Let $\mathcal{S}=\{S(t,s) \, ; \, t\geq s\}\subset \mathcal{L}(X)$ be a linear evolution process in a Banach space $(X,\|\cdot\|_X)$. 
	We say that $S$ admits 
	\textbf{nonuniform exponential dichotomy of type I on 
		$\mathbb{J}$,
		or simply NEDI}, if there exists
	a family of continuous projections
	$\{\Pi^u(t)\, ; \, t\in \mathbb{J}\}$ such that
	\begin{enumerate}
		\item $\Pi^u(t) S(t,s)= S(t,s) \Pi^u(s)$, for all $t\geq s$;
		\item $S(t,s)|_{R( \Pi^u(s) ) }$ is an isomorphism for all $t\geq s$,
		and the inverse over $R(\Pi^u(t))$ we denote by
		$S(s,t)$; 
		\item there exist
		$M,\alpha,\beta>0$, and 
		$\delta,\nu\geq 0$ such that
		\begin{equation*}
		\|S(t,s)\Pi^s(s)\|_{\mathcal{L}(X)}\leq Me^{\delta|\textcolor{blue}{s}|} e^{-\alpha(t-s)}, 
		\ \ t\geq s,
		\end{equation*}
		where $\Pi^s(s):=(I-\Pi^u(s))$ for all $s\in \mathbb{J}$ and
		\begin{equation*}
		\|S(t,s)\Pi^u(s)\|_{\mathcal{L}(X)}\leq 
		Me^{\nu|\textcolor{blue}{s}|} e^{\beta(t-s)}, 
		\ \ t< s.
		\end{equation*}
	\end{enumerate}
If $\upsilon=\max\{\delta,\nu\}$ and $\omega=\min\{\alpha,\beta\}$, then
$M(t)=Me^{\upsilon|t|}$ and $\omega>0$ are called \textbf{bound} and \textbf{exponent} of the NEDI on $\mathbb{J}$, respectively.
\end{definition}

\par We present another notion of nonuniform exponential dichotomy with a slight modification over Item (3) of Definition \ref{def-nonuniform-exp-dichotomy-1}.
\begin{definition}\label{def-nonuniform-exp-dichotomy-2}
	Let $\mathcal{S}=\{S(t,s) \, ; \, t\geq s\}\subset \mathcal{L}(X)$ be a linear evolution process in a Banach space $(X,\|\cdot\|_X)$. 
	We say that $S$ admits 
	\textbf{nonuniform exponential dichotomy of type II on $\mathbb{J}$ ,
		or simply NEDII}, 
	if there exists 
	a family of continuous projections $\{\Pi^u(t)\, ; \, t\in \mathbb{J}\}$ such that
	\begin{enumerate}
		\item $\Pi^u(t) S(t,s)= S(t,s) \Pi^u(s)$, for all $t\geq s$;
		\item $S(t,s)|_{R( \Pi^u(s) ) }$ is an isomorphism for all $t\geq s$
		and the inverse over $R(\Pi^u(t))$ we denote by
		$S(s,t)$; 
		\item there exist
		$M,\alpha,\beta>0$ and 
		$\nu,\delta \geq 0$ such that
		\begin{equation}\label{eq-def-bound-stable}
		\|S(t,s)\Pi^s(s)\|_{\mathcal{L}(X)}\leq Me^{\delta|\textcolor{blue}{t}|} e^{-\alpha(t-s)}, 
		\ \ t\geq s
		\end{equation}
		where $\Pi^s(s):=(I-\Pi^u(s))$ for all $s\in \mathbb{J}$ and
		\begin{equation}\label{eq-def-bound-unstable}
		\|S(t,s)\Pi^u(s)\|_{\mathcal{L}(X)}\leq Me^{\nu|\textcolor{blue}{t}|} e^{\beta(t-s)}, 
		\ \ t< s.
		\end{equation}
	\end{enumerate}
\end{definition}

\par In this work we will use the following notations.
\begin{notation}
	Let $\mathcal{S}$ be an evolution process
	that
	admits a nonuniform 
	exponential dichotomy of type $i\in \{I,II\}$, families of projections
	$\Pi^s_i$ and $\Pi^u_i$.
	Then we write 
	\begin{enumerate}
		\item the \textbf{stable set at instant $t$}, 
		$X^s_i(t):=\Pi^s_i(t)X$ and the \textbf{unstable set at the instant $t$};
		$X^u_i(t):=\Pi^u_i(t)X$ for all $t\in \mathbb{J}$;
		\item the \textbf{stable family} $X^s_i:=\{X^s_i(t): t\in \mathbb{J}\}$, and 
		the \textbf{unstable family} $X^u_i:=\{X^u_i(t): t\in \mathbb{J}\}$;
		\item $X^s_i(\alpha,\delta)=\{X^j_i(t): t\in \mathbb{J}\}$ to mean that over stable family the bound is 
		given by $M^s(t)=Me^{\delta|t|}$ and the exponent by 
		$\alpha>0$, see \eqref{eq-def-bound-stable};
		\item $X^u_i(\beta,\nu)=\{X^j_i(t): t\in \mathbb{J}\}$ to mean that over the unstable family the bound is 
		given by $M^u(t)=Me^{\nu|t|}$ and the exponent by 
		$\beta>0$, see \eqref{eq-def-bound-unstable}.
	\end{enumerate}
\end{notation}
\par In the case that $\mathbb{J}=\mathbb{R}$, the names ``stable" and ``unstable" in NEDII have the standard sense of exponential dichotomy, only when 
$\alpha>\delta$ and $\beta>\nu$. In fact, at this situation, for every $s\in \mathbb{R}$ fixed, we see that $S(t,s)\Pi^s(s)\to 0$ as $t\to +\infty$ and $S(t,s)\Pi^u(s)\to 0$ as $t\to -\infty$.
However, there are examples of evolution processes that admits NEDII with $\alpha<\delta$ or $\beta<\nu$ with some interesting properties to be explored. For instance, 
 even in this ``pathological situation'', it is possible 
  to obtain applications on the asymptotic behavior for evolution processes (Section \ref{sec-app-odes} and Section \ref{sec-application-pdes}). 
\par The following result provides a simpler way to relate both types of nonuniform exponential dichotomies.
\begin{theorem}\label{th-trivial-relation-between-type1-type2}
	Let $\mathcal{S}$ be an evolution process.
	\par On $\mathbb{R}^+$:
	\begin{enumerate}
		\item there exists a NEDI with $X^s_I(\alpha,\delta)$,
		if and only if,
		there exists a NEDII with $X^s_{II}(\alpha+\delta,\delta)$;
		\item there exists a NEDI with $X^u_I(\beta+\nu,\nu)$, 
		if and only if,
		there exists a NEDII with $X^u_{II}(\beta,\nu)$.
	\end{enumerate}
	\par On $\mathbb{R}^-$:
	\begin{enumerate}
		\item there exists a NEDII with $X^s_{II}(\alpha,\delta)$,
		if and only if,
		there exists a NEDI with $X^s_{I}(\alpha+\delta,\delta)$;
		\item there exists a NEDII with $X^u_{II}(\beta+\nu,\nu)$, 
		if and only if,
		there exists a NEDI with $X^u_{I}(\beta,\nu)$.
	\end{enumerate}
\end{theorem}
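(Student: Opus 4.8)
The plan is to exploit that Definitions~\ref{def-nonuniform-exp-dichotomy-1} and~\ref{def-nonuniform-exp-dichotomy-2} differ \emph{only} in Item~(3): the invariance condition~(1) and the isomorphism condition~(2) are word-for-word the same. So the strategy is to keep the very same family of projections $\{\Pi^u(t):t\in\mathbb{J}\}$, leave~(1)--(2) untouched, and merely rewrite the exponential estimate in~(3). The single observation that does all the work is that on a fixed half-line the sign of $|t|$ and of $|s|$ is constant, so one can trade $e^{c|s|}$ for $e^{c|t|}$ at the cost of a factor $e^{\pm c(t-s)}$, and this factor is absorbed into the exponential rate of the dichotomy. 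Since every such manipulation is an \emph{equality} of the two bounds with the same constant $M$, each implication is automatically reversible, which is why all four items are stated as equivalences.

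Carrying this out, on $\mathbb{R}^+$ one has $t\geq s\geq 0$ in the stable regime, whence $|s|=s=t-(t-s)$ and $e^{\delta|s|}e^{-\alpha(t-s)}=e^{\delta|t|}e^{-(\alpha+\delta)(t-s)}$: this is exactly the passage between the NEDI estimate with $X^s_I(\alpha,\delta)$ and the NEDII estimate with $X^s_{II}(\alpha+\delta,\delta)$. In the unstable regime on $\mathbb{R}^+$ one has $0\leq t<s$, so $|s|=s=t+(s-t)$ and $e^{\nu|s|}e^{(\beta+\nu)(t-s)}=e^{\nu|t|}e^{\beta(t-s)}$, giving the equivalence of $X^u_I(\beta+\nu,\nu)$ with $X^u_{II}(\beta,\nu)$. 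On $\mathbb{R}^-$ the signs flip: for $s\leq t\leq 0$ one writes $|t|=-t=|s|-(t-s)$, hence $e^{\delta|t|}e^{-\alpha(t-s)}=e^{\delta|s|}e^{-(\alpha+\delta)(t-s)}$, exchanging $X^s_{II}(\alpha,\delta)$ with $X^s_I(\alpha+\delta,\delta)$; and for $t<s\leq 0$ one has $|t|=-t=|s|+(s-t)$, so $e^{\nu|t|}e^{(\beta+\nu)(t-s)}=e^{\nu|s|}e^{\beta(t-s)}$, exchanging $X^u_{II}(\beta+\nu,\nu)$ with $X^u_I(\beta,\nu)$.

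No genuine obstacle is expected; this is the ``trivial relation'' announced by the section title. The only things requiring care in the write-up are (i) the sign of the traded factor $e^{\pm c(t-s)}$ in each of the four cases, which depends on the half-line and on whether $t-s\geq 0$ (stable) or $t-s<0$ (unstable), and (ii) checking that the new exponents $\alpha+\delta$ and $\beta+\nu$ are still strictly positive, so that the transformed estimates genuinely define a nonuniform exponential dichotomy of the asserted type. It is also worth remarking afterwards that the roles are reversed between the two half-lines --- on $\mathbb{R}^+$ the NEDI condition carries the smaller stable exponent and is thus the finer statement, whereas on $\mathbb{R}^-$ it is NEDII that is finer --- which is the point that lets one single out the ``optimal'' dichotomy on each half-line.
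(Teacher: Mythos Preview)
Your proposal is correct and follows essentially the same approach as the paper: both arguments rest on the single algebraic identity $-\alpha(t-s)+\delta|s|=-(\alpha+\delta)(t-s)+\delta|t|$ (and its variants for the other three cases), keeping the same family of projections throughout. Your write-up is in fact slightly more explicit than the paper's, which only spells out Item~(1) on $\mathbb{R}^+$ and declares the remaining cases similar.
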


\begin{proof}
	Note that, if $t,s\in \mathbb{R}^+$ we have
	\begin{equation*}
	-\alpha(t-s)+\delta s
	=
	-(\alpha+\delta)(t-s)+\delta t.
	\end{equation*}
	Hence, for an evolution process $\mathcal{S}$ that admits NEDI with bound on the stable set 
	$M^s(s)=Me^{\delta|s|}$, for some $M,\delta>0$ (the case $\delta=0$ is trivial), and exponent $\alpha>0$ we have that 
	\begin{equation*}
	\|S(t,s)\Pi^s(s)\|_{\mathcal{L}(X)}\leq Me^{-\alpha(t-s)+\delta| s|}=
	Me^{-(\alpha+\delta)(t-s)+\delta|t|},
	\end{equation*}
	which finishes the proof of Item (1).
	Similarly, Item (2)  follows from the relation
	\begin{equation*}
	\alpha(t-s)+\delta |t|
	=
	(\alpha+\delta)(t-s)+\delta |s|, \ \ t,s\in \mathbb{R}^+.
	\end{equation*}
	\par The proof on $\mathbb{R}^-$ is similar to the case on $\mathbb{R}^+$.
\end{proof}

\par Next corollary summarize the relations of Theorem \ref{th-trivial-relation-between-type1-type2}. 

\begin{corollary}\label{corollary-trivial-information}
	Let $\mathcal{S}$ be an evolution process in a semi-line, i.e., $\mathbb{J}=\mathbb{R}^+$ or $\mathbb{R}^-$.
	If $\mathcal{S}$ admits NEDI (or NEDII) with bound $M(t)=Me^{\upsilon|t|}$ and exponent
		 and $\omega >\upsilon$, then
		$\mathcal{S}$ admits NEDII (NEDI) with bound $M(t)=Me^{\upsilon|t|}$ and exponent $\omega-\upsilon>0$.
\end{corollary}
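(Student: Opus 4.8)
The plan is to derive this directly from Theorem \ref{th-trivial-relation-between-type1-type2} by a case analysis on the semi-line $\mathbb{J}$ and on whether we start from a NEDI or a NEDII. The key observation is that the theorem is phrased with independent bounds/exponents on the stable and unstable families, $X^s_i(\alpha,\delta)$ and $X^u_i(\beta,\nu)$, whereas the corollary uses the coarser single bound $M(t)=Me^{\upsilon|t|}$ and single exponent $\omega$ via the conventions $\upsilon=\max\{\delta,\nu\}$ and $\omega=\min\{\alpha,\beta\}$. So the main (only) work is to check that the hypothesis $\omega>\upsilon$ is exactly what is needed to feed Theorem \ref{th-trivial-relation-between-type1-type2} and that its conclusion can be repackaged back into a single bound/exponent pair.

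First I would treat the case $\mathbb{J}=\mathbb{R}^+$ with $\mathcal{S}$ admitting a NEDI. Writing the NEDI data as $X^s_I(\alpha,\delta')$ and $X^u_I(\beta,\nu')$ with $\delta'\le\upsilon$, $\nu'\le\upsilon$ (after, if necessary, enlarging $\delta',\nu'$ up to $\upsilon$, which only weakens the estimates) and $\alpha,\beta\ge\omega$, I apply part (1) of the $\mathbb{R}^+$ statement of Theorem \ref{th-trivial-relation-between-type1-type2} to get a NEDII with $X^s_{II}(\alpha+\delta',\delta')$; note $\alpha+\delta'>\alpha\ge\omega$, but more usefully I want a \emph{lower} bound on the new exponent that is at least $\omega-\upsilon$, which is immediate since $\alpha\ge\omega$ gives $\alpha\ge\omega-\upsilon\le\alpha+\delta'$ trivially — actually here the stable exponent only \emph{increases}, so there is nothing to lose on the stable side. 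The unstable side is the one that costs: part (2) on $\mathbb{R}^+$ says a NEDI with $X^u_I(\beta,\nu')$ is equivalent to a NEDII with $X^u_{II}(\beta-\nu',\nu')$ (reading the equivalence $X^u_I(\beta+\nu,\nu)\Leftrightarrow X^u_{II}(\beta,\nu)$ with the substitution $\beta\mapsto\beta-\nu'$), and $\beta-\nu'\ge\omega-\upsilon>0$ by hypothesis. Hence $\mathcal{S}$ admits a NEDII with unstable exponent $\ge\omega-\upsilon$ and stable exponent $\ge\omega>\omega-\upsilon$; taking the common value $\omega-\upsilon$ as exponent and keeping $Me^{\upsilon|t|}$ as bound (the nonuniform exponents $\delta',\nu'$ are $\le\upsilon$) gives the claimed NEDII on $\mathbb{R}^+$.

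The remaining three cases are symmetric. On $\mathbb{R}^+$ starting from a NEDII, I run the same two equivalences in the reverse direction to produce a NEDI. On $\mathbb{R}^-$ the roles of NEDI and NEDII are swapped in Theorem \ref{th-trivial-relation-between-type1-type2}, so the identical bookkeeping — stable exponent unchanged, unstable exponent dropping by at most $\nu'\le\upsilon$ — yields the statement there. In each case the only inequality invoked beyond the theorem is $\omega-\upsilon>0$, which is precisely the hypothesis $\omega>\upsilon$, guaranteeing the resulting exponent is genuinely positive so that the object produced is an honest nonuniform exponential dichotomy. I do not anticipate a real obstacle here: the content is entirely in Theorem \ref{th-trivial-relation-between-type1-type2}, and the corollary is a matter of absorbing the separate stable/unstable constants into the min/max conventions and observing that the worst-case loss in the exponent is the nonuniform exponent $\upsilon$ itself. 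The one point requiring a word of care in the write-up is the harmless step of inflating $\delta',\nu'$ to the common value $\upsilon$ before applying the theorem, which is legitimate because replacing $\delta'$ by any larger number only enlarges the right-hand side of the dichotomy estimates.
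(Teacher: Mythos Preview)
Your proposal is correct and follows exactly the approach the paper intends: the corollary is stated without proof as an immediate consequence of Theorem~\ref{th-trivial-relation-between-type1-type2}, and your case analysis simply makes explicit the bookkeeping of which side (stable or unstable) loses at most $\upsilon$ in the exponent when passing between the two types. One small wording slip: which side loses depends on the case (e.g., on $\mathbb{R}^+$ going NEDII$\to$NEDI it is the \emph{stable} exponent that drops by $\delta'\le\upsilon$, not the unstable one), but since in every case exactly one exponent decreases by at most $\upsilon$ and the other only increases, your conclusion $\min\{\alpha',\beta'\}\ge\omega-\upsilon>0$ stands.
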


\par The analysis as in Corollary \ref{corollary-trivial-information} is not optimal, we lose information when unifying the exponents $\alpha, \beta$ and the growth of the bound or order $e^{\delta|t|}$ or $e^{\nu|t|}$.
Note that the same problem occurs when we study the exponents in the whole line. Hence, to provide an ``optimal'' analysis on the relation of the exponents and the growth of the bound, we sometimes consider different exponents, even in the half-lines $\mathbb{R}^+$ and $\mathbb{R}^-$.

%
%


\subsection{Examples of NEDII}\label{subsec-examples_NEDII} In this subsection we provide examples of scalar evolution processes that admit nonuniform exponential dichotomies (of type I and II). Our goal is to guarantee that NEDII is a new concept and explore the differences with the standard notion. Additionally, each example of this subsection can be used to provide an example of parabolic PDEs with nonuniform exponential dichotomies, see Subsection \ref{subsec-existence-NEDII}.
%
\par The following proposition is inspired by Barreira and Valls \cite[Proposition 2.3]{Barreira-Valls-Sta}. 
\begin{proposition}\label{Example-Barreira}
	Let $a,b>0$ and $\mathcal{S}=\{S(t,s): t\geq s\}$ be
	the evolution process defined by 
	$x(t,s;x_0):=S(t,s)x_0$, where $x$ is the solution for $\dot{x}=-b x-at\sin(t)x$, $t\geq s$
	at the initial data $x(s)=x_0\in \mathbb{R}$. We have that 
	\begin{enumerate}
		\item $\mathcal{S}$ admits a NEDII on $\mathbb{R}^+$ with
		$X_{II}^s(b+a,2a)$ and $\Pi^u(t)=0$ for all $t\geq 0$.
		\item $\mathcal{S}$ admits a NEDI on $\mathbb{R}^-$ with
		$X_I^s(b+a,2a)$ and $\Pi^u(t)=0$ for all $t\geq 0$.
			\end{enumerate}
		Additionally, if $b>a$, then 
			\begin{enumerate}
			
		\item $\mathcal{S}$ admits a NEDI on $\mathbb{R}^+$ with
		$X_I^s(b-a,2a)$ and $\Pi^u(t)=0$ for all $t\geq 0$.
		\item $\mathcal{S}$ admits a NEDII on $\mathbb{R}^-$ with
		$X_{II}^s(b-a,2a)$ and $\Pi^u(t)=0$ for all $t\geq 0$.

		\item $\mathcal{S}$ admits NEDI and NEDII on $\mathbb{R}$, with
		$X^s_{j}(b-a,2a)$, $j=I,II$, and $\Pi^u(t)=0$ for all $t\geq 0$.
	\end{enumerate}	
\end{proposition}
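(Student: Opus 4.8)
The whole proposition reduces to one explicit computation: since the equation $\dot{x}=-bx-at\sin(t)\,x$ is scalar and linear, the evolution process is given in closed form by
\begin{equation*}
S(t,s)=\exp\!\left(-b(t-s)-a\int_s^t \tau\sin(\tau)\,d\tau\right).
\end{equation*}
So the first step is to evaluate $\int_s^t \tau\sin(\tau)\,d\tau$ by parts: $\int \tau\sin\tau\,d\tau = \sin\tau-\tau\cos\tau$. Writing $\varphi(\tau):=\sin\tau-\tau\cos\tau$, we get $S(t,s)=e^{-b(t-s)}e^{-a(\varphi(t)-\varphi(s))}$. The key elementary estimate is that $-\varphi(\tau)=\tau\cos\tau-\sin\tau$ satisfies $|\varphi(\tau)|\le |\tau|+1\le 2|\tau|$ for $|\tau|\ge 1$ (and is bounded for $|\tau|\le 1$), so $-a\varphi(t)\le a(|t|+1)$, while the $s$-dependence contributes a factor $e^{a\varphi(s)}\le e^{a(|s|+1)}$. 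After absorbing the universal $e^{a}$ constants into $M$, this yields bounds of the shape $Me^{2a|t|}e^{-b(t-s)}$ (using the $t$-dependent term) or $Me^{2a|s|}e^{-b(t-s)}$ (using the $s$-dependent term), which are exactly the NEDII/NEDI bounds with exponent $b$ and bound growth $2a$. Since $\Pi^u\equiv 0$, there is no unstable part to treat and conditions (1)–(2) of Definitions \ref{def-nonuniform-exp-dichotomy-1}–\ref{def-nonuniform-exp-dichotomy-2} are vacuous.

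For the finer statements one has to be careful about which of $t$ or $s$ gives the better (i.e. genuinely decaying or at least correctly-signed) exponent, and this is where the half-line splitting matters. The cleanest route is: prove the crude bounds with exponent $b$ and growth $2a$ as above on all of $\mathbb{R}$, then invoke Theorem \ref{th-trivial-relation-between-type1-type2} to pass between NEDI and NEDII on $\mathbb{R}^+$ and $\mathbb{R}^-$, trading one unit of exponent for the growth rate. Concretely: the ``$e^{2a|t|}e^{-b(t-s)}$'' bound on $\mathbb{R}^+$ is by definition a NEDII with $X^s_{II}(b,2a)$; but one can do better directly — on $\mathbb{R}^+$, $-a\varphi(t)+a\varphi(s)\le a(t-s)+2a$ roughly (since $\tau\cos\tau-\sin\tau$ grows at most like $|\tau|$), and combined with Theorem \ref{th-trivial-relation-between-type1-type2}(1) on $\mathbb{R}^+$ one recovers exactly $X^s_{II}(b+a,2a)$. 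Symmetrically, on $\mathbb{R}^-$ the roles of ``final'' and ``initial'' time swap, giving $X^s_I(b+a,2a)$, which is item (2). So items (1)–(2) are the ``universally valid'' half, obtained for any $a,b>0$.

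When $b>a$, the exponent $b$ dominates the linear growth rate $a$ of $\varphi$, so the product $e^{-b(t-s)}e^{-a(\varphi(t)-\varphi(s))}$ can be reorganized to exhibit a genuine exponential decay. Here I would apply Corollary \ref{corollary-trivial-information} (or re-run Theorem \ref{th-trivial-relation-between-type1-type2} in the other direction): from the NEDII on $\mathbb{R}^+$ with bound growth $2a$ and exponent $b+a>2a$... actually more carefully, from the NEDI-with-growth-$2a$-and-exponent-$b+a$ statement one passes, on $\mathbb{R}^+$, to a NEDII only when the exponent exceeds the growth, which needs $b+a>2a$, i.e. $b>a$ — giving item (3), $X^s_I(b-a,2a)$ (here the trade goes the other way, so one subtracts $a$ rather than adds it; this sign bookkeeping is the one genuinely error-prone point and I would write out the identity $-b(t-s)+\delta s = -(b-a)(t-s) + \text{[bounded in }t\text{]}$ type relation explicitly for $t,s\ge 0$). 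Item (4) is the mirror image on $\mathbb{R}^-$. Finally, item (5): when $b>a$, the decay bound $Me^{(2a-?)\cdot}e^{-(b-a)(t-s)}$ holds with the correct sign on both half-lines simultaneously, but one must check that the two one-sided estimates glue to a single bound valid for all $t\ge s$ in $\mathbb{R}$ with a uniform constant — this is routine because $\varphi$ is globally defined and the estimate $|\varphi(\tau)|\le |\tau|+1$ is global, so one just takes the max of the two $M$'s.

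The main obstacle is purely bookkeeping rather than conceptual: tracking precisely whether the exponent shifts by $+a$ or $-a$ when moving between NEDI and NEDII on each half-line, and making sure the residual ``$+1$'' constants from $\int_s^t\tau\sin\tau\,d\tau$ (the non-$\tau\cos\tau$ terms, and the boundary behavior near $|\tau|\le 1$) are absorbed into $M$ without accidentally affecting the growth exponent $2a$ or the decay exponent. I expect the actual write-up to consist of the one-line closed form, the integration-by-parts, the inequality $|\tau\cos\tau-\sin\tau|\le |\tau|+1$, and then four or five invocations of Theorem \ref{th-trivial-relation-between-type1-type2} / Corollary \ref{corollary-trivial-information} with the sign of the shift chosen correctly in each case.
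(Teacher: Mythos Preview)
Your approach is essentially the paper's: compute the closed form, bound $\tau\cos\tau$ and $\sin\tau$, then invoke Theorem~\ref{th-trivial-relation-between-type1-type2} for the conversions. Items (3)--(5) are handled identically. However, your treatment of items (1)--(2) takes an unnecessary detour and contains a concrete error.

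The error: the inequality you write, $-a\varphi(t)+a\varphi(s)\le a(t-s)+2a$ on $\mathbb{R}^+$, is false. Take $t=2k\pi$, $s=(2k-1)\pi$: then $-\varphi(t)+\varphi(s)=2k\pi+(2k-1)\pi=(4k-1)\pi$, which is unbounded, while $t-s=\pi$. The correct bound is $-a\varphi(t)+a\varphi(s)\le a(t+s)+2a$ for $t\ge s\ge 0$, coming from $t\cos t\le t$, $-s\cos s\le s$, $|\sin\tau|\le 1$.

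The detour: your ``crude'' bound $e^{-b(t-s)+a|t|+a|s|}$ already \emph{is} the sharp bound for items (1)--(2), not the weaker $X^s_{II}(b,2a)$ you obtain by estimating $a|s|\le a|t|$. On $\mathbb{R}^+$ one has $a|t|+a|s|=a(t+s)=-a(t-s)+2at$, so $e^{-b(t-s)+a(t+s)}=e^{-(b+a)(t-s)+2a|t|}$ directly, which is $X^s_{II}(b+a,2a)$; no appeal to Theorem~\ref{th-trivial-relation-between-type1-type2} is needed here. The paper encodes this same identity by the rewriting
\[
-b(t-s)+at\cos t-as\cos s=-(b+a)(t-s)+at(\cos t+1)-as(\cos s+1),
\]
after which the non-negativity of $\cos\tau+1$ lets one drop the $s$-term on $\mathbb{R}^+$ (giving item (1)) or the $t$-term on $\mathbb{R}^-$ (giving item (2)) in one line. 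This is cleaner than your route because it avoids ever passing through the suboptimal exponent $b$, and it works for all $a,b>0$ without worrying about whether an intermediate ``NEDI with exponent $b-a$'' is formally valid. Items (3)--(5) then follow from Theorem~\ref{th-trivial-relation-between-type1-type2} exactly as you say.
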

\begin{proof}
	Note that $S(t,s)x=e^{-b (t-s)+ at\cos(t)-as\cos(s)-a\sin(t)+a\sin(s)}x$, $t,s\in \mathbb{R}$ and $x\in \mathbb{R}$. 
	Hence
	\begin{equation*}
	\|S(t,s)\|_{\mathcal{L}(\mathbb{R})}=S(t,s) \,1=e^{-(b+a) (t-s)+ at(\cos(t)+1)-as(\cos(s)+1)-a\sin(t)+a\sin(s)}.
	\end{equation*}
	Thus, 
	\begin{eqnarray*}
	S(t,s)\, 1\leq e^{2a-(b+a) (t-s)+2at}, \ t\geq s\geq 0,\\
	S(t,s)\, 1\leq e^{2a-(b+a) (t-s)+2a|s|}, \ s\leq t\leq 0.
	\end{eqnarray*}
	which finishes the prove of the first two items. 
	The proof of the remaining items follows from Theorem \ref{th-trivial-relation-between-type1-type2}.
	\end{proof}
\begin{remark}\label{remark_NEDII>NEDI_semilne}
	Let $\mathcal{S}$ be the evolution process defined in Proposition \ref{Example-Barreira}
	Note that, if $b<3a$,
	$\mathcal{S}$ admits a
	NEDII on $\mathbb{R}^+$ with
	$X_{II}^s(\alpha_2,\delta_2)$, where $\alpha_2:=b+a=>2a=:\delta_2$ and a NEDI on $\mathbb{R}^+$ with $X_I^s(\alpha_1,\delta_1)$
	$\alpha_1:=b-a<2a=:\delta_1$. 
	Hence, in some situations, it is possible to choose NEDII with ``better'' relation in the exponents than NEDI. 
	Of course, an analogous relation its obtained over
	$\mathbb{R}^-$, but $\mathcal{S}$ admits NEDI and NEDII, where NEDI has the ``better'' relation on the exponents for NEDI. 
\end{remark}
\par Next, we provide an example of an evolution process that admits NEDII with two different projections and does not admit any NEDI.
\begin{proposition}\label{prop-example-NEDII-line}
	Define $f_0:\mathbb{R}\to \mathbb{R}$ by 
	\begin{equation}\label{example-f_0-descontinuous}
	f_0(t)= \left\{ 
	\begin{array}{l l} 
	1
	& \quad \hbox{ if } t\geq 0,\\
	-1
	& \quad \hbox{ if } t < 0,
	\end{array} 
	\right.
	\end{equation} 
	and consider the Caratheodory differential equation
	\begin{equation*}
	\dot{x}=f_0(t)x, \hbox{ for } t\in \mathbb{R}.
	\end{equation*}
	Then, the induced evolution process 
	$\mathcal{S}_0=\{S_{0}(t,s):t, s\in \mathbb{R} \}$ admits a NEDII on $\mathbb{R}$ with two different 
	families of projections, and does not admit any NEDI on $\mathbb{R}$.
\end{proposition}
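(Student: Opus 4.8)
The plan is to solve the equation explicitly and then exploit that the phase space is one-dimensional. The primitive $F(t):=\int_0^t f_0(\sigma)\,d\sigma$ equals $t$ on $\mathbb{R}^+$ and $-t$ on $\mathbb{R}^-$, hence $F(t)=|t|$ for every $t\in\mathbb{R}$; therefore the Carath\'eodory solution with $x(s)=x_0$ is $x(t,s;x_0)=e^{F(t)-F(s)}x_0=e^{|t|-|s|}x_0$, so $S_0(t,s)$ is multiplication by $e^{|t|-|s|}$ for all $t\ge s$. Since this factor never vanishes, $\mathcal{S}_0$ is invertible and the same formula defines $S_0(t,s)$ for $t<s$. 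Because $X=\mathbb{R}$, the only idempotents in $\mathcal{L}(\mathbb{R})$ are $0$ and $Id_{\mathbb{R}}$, so a continuous family of projections must be one of the two constant families $\Pi^u\equiv 0$ or $\Pi^u\equiv Id_{\mathbb{R}}$; for both, invariance and the isomorphism property in the definitions of NEDI and NEDII hold trivially.

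Next I would verify that each of these two families turns $\mathcal{S}_0$ into a NEDII on $\mathbb{R}$. For $\Pi^u\equiv 0$ the unstable estimate is vacuous and it suffices to check $e^{|t|-|s|}\le e^{\delta|t|}e^{-\alpha(t-s)}$ for $t\ge s$; choosing $M=1$, $\alpha\in(0,1]$ and $\delta=1+\alpha$, this is the elementary inequality
\[
\delta|t|-\alpha(t-s)-(|t|-|s|)=\alpha(|t|-t)+(|s|+\alpha s)\ge 0,
\]
both summands being nonnegative for $t\ge s$ and $\alpha\le 1$. Symmetrically, $\Pi^u\equiv Id_{\mathbb{R}}$ has a vacuous stable estimate, while the unstable one, $e^{|t|-|s|}\le e^{\nu|t|}e^{\beta(t-s)}$ for $t<s$, holds with $M=1$, $\beta\in(0,1]$, $\nu=1+\beta$ since $\nu|t|+\beta(t-s)-(|t|-|s|)=\beta(|t|+t)+(|s|-\beta s)\ge 0$. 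Thus $\mathcal{S}_0$ admits NEDII on $\mathbb{R}$ with two distinct families of projections; note these live in the ``pathological'' regime ($\alpha\le\delta$, resp.\ $\beta\le\nu$) noted earlier in this section.

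It then remains to rule out NEDI on $\mathbb{R}$. Again only $\Pi^u\equiv 0$ and $\Pi^u\equiv Id_{\mathbb{R}}$ are possible. If $\Pi^u\equiv 0$, evaluating the stable NEDI bound at $s=0$ gives $e^{t}=e^{|t|}=\|S_0(t,0)\|\le Me^{\delta\cdot 0}e^{-\alpha t}=Me^{-\alpha t}$ for all $t\ge 0$, i.e.\ $e^{(1+\alpha)t}\le M$, impossible as $t\to+\infty$. If $\Pi^u\equiv Id_{\mathbb{R}}$, evaluating the unstable NEDI bound at $s=0$ gives $e^{-t}=e^{|t|}=\|S_0(t,0)\|\le Me^{\nu\cdot 0}e^{\beta t}=Me^{\beta t}$ for all $t<0$, i.e.\ $e^{-(1+\beta)t}\le M$, impossible as $t\to-\infty$. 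Hence $\mathcal{S}_0$ admits no NEDI on $\mathbb{R}$. The computation is routine; the only conceptual point — and the reason NEDII works while NEDI does not — is that the nonuniform factor $e^{\delta|s|}$ of a NEDI collapses to $1$ at $s=0$ and so cannot absorb the growth $e^{|t|}$ of $S_0(t,0)$ as $t\to\pm\infty$, whereas the NEDII factor $e^{\delta|t|}$ grows with $t$ and does. The only mild care required is keeping track of the four sign combinations of $s$ and $t$ in the NEDII estimates, which the two displayed inequalities handle at once.
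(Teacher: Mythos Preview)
Your proof is correct and follows essentially the same approach as the paper: compute $S_0(t,s)=e^{|t|-|s|}$ explicitly, verify the NEDII estimates for both constant projections $\Pi^u\equiv 0$ and $\Pi^u\equiv Id_{\mathbb{R}}$, and rule out NEDI by contradiction using that only these two projection families are possible in one dimension. Your NEDII verification is in fact tidier than the paper's, which splits into the three sign regions $s\leq 0\leq t$, $0\leq s\leq t$, $s\leq t\leq 0$ and checks each separately, whereas your single algebraic identity $\alpha(|t|-t)+(|s|+\alpha s)\ge 0$ handles all cases at once; likewise, your NEDI contradiction (specialize to $s=0$ and send $t\to\pm\infty$) is a crisp special case of the paper's argument, which fixes an arbitrary $s$ and observes that the NEDI bound would force $S_0(t,s)1\to 0$ as $t\to -\infty$ (respectively $t\to +\infty$), contradicting $S_0(t,s)1=e^{|t|-|s|}\to +\infty$.
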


\begin{proof}
	For each $t\in\mathbb{R}$ define the real function $T(t):\mathbb{R}\rightarrow\mathbb{R}$ 
	as
	\begin{equation*}
	T(t)x= \left\{ 
	\begin{array}{l l} 
	e^{t}x 
	& \quad \hbox{if } t\geq 0,
	\\ e^{-t}x \, 
	& \quad \hbox{if } t\leq 0,
	\end{array} 
	\right.
	\end{equation*} 
	for each $x\in \mathbb{R}$. 
	Note that $T(t)$ is an homeomorphism on $\mathbb{R}$ and that
	$S_0(t,s)=T(t)T(s)^{-1}$ for every $t, s\in \mathbb{R}$.
	\par First we show that $\mathcal{S}_0$ admits a NEDII 
	with the $\Pi^u(t)=0$ for all $t\in \mathbb{R}$, i.e., we prove that $\mathcal{S}_0$
	satisfies
	\begin{equation}\label{eq-example-line-NEDII-Not-NEDI}
	\|S_0(t,s)\|_{\mathcal{L}(\mathbb{R})}=S_0(t,s) \,1\leq e^{-(t-s)+2|t|}, \hbox{ for all } t\geq s.
	\end{equation}
	Indeed,
	if 
	$s\leq 0\leq t$
	we are able to write
	$S_0(t,s)1=e^{s+t}= e^{-(t-s)+2(t+s)}$.
	Now, let $t\geq s\geq 0$ then 
	\begin{equation*}
	t-s\leq -(t-s)+2t=-(t-s)+2|t|.
	\end{equation*}
	Thus
	$	S_0(t,s) 1=e^{t-s}\leq e^{-(t-s)+2|t|}$, for $t\geq s\geq 0$.
	Finally, if $s\leq t\leq 0$ then $S_0(t,s) 1=e^{-(t-s)}$ and 
	$\mathcal{S}_0$ satisfies \eqref{eq-example-line-NEDII-Not-NEDI}.
	%
	\par Similarly, it is possible to prove that
	\begin{equation}
	S_0(t,s) \, 1\leq e^{t-s+2|t|}, \  t\leq s.
	\end{equation}
	Therefore, $\mathcal{S}_0=\{S_0(t,s): t\geq s\}$ admits a NEDII with $\Pi^u(\cdot)=0$ and $\Pi^u(\cdot)=Id_\mathbb{R}$.
	\par Finally, suppose that 
	$\mathcal{S}_0$ admits a NEDI on $\mathbb{R}$,
	then exists $\{\widetilde{\Pi}^u(t)\, :\, t\in\mathbb{R} \}$ a family of projections so that
	satisfies all the conditions from the Definition 
	\ref{def-nonuniform-exp-dichotomy-1}.
	It is straightforward to verify that $\widetilde{\Pi}^u(\cdot)$ must be constant equal to the identity map $Id_\mathbb{R}$ or the null operator $0$.
	\par Assume that $\widetilde{\Pi}^u=Id_\mathbb{R}$. 
	Then there are
	 $\widetilde{M},\tilde{\beta}>0$, and $\tilde{\nu}\geq 0$ such that
	\begin{equation}
	S_0(t,s) 1\leq \widetilde{M}e^{\tilde{\beta}(t-s)+ \tilde{\nu}|s| }, \  t\leq s.
	\end{equation}
	Then, for each $s\in \mathbb{R}$ fixed, 
	$S_0(t,s)1\to 0$ as $t\to -\infty$,
	which is a contradiction. 
	\par Similarly, we prove that we can not have $\widetilde{\Pi}^u=0$, and therefore 
	$\mathcal{S}_0$ does not admit any NEDI.
\end{proof}

\begin{proposition}
	\label{Example-f-continuous-bounded}
	Let $f:\mathbb{R}\to \mathbb{R}$ be a continuous function such that
	\begin{enumerate}
		\item $\lim_{t\to +\infty} f(t)=1$;
		\item $\lim_{t\to -\infty} f(t)=-1$.
	\end{enumerate}
	Then, the evolution process $\mathcal{S}_f=\{S_f(t,s): t, s\in \mathbb{R}\}$, associated to $\dot{x}=f(t)x$, admits a NEDII on $\mathbb{R}$ with two different projections. 
	Moreover, $\mathcal{S}_f$ does not admits any NEDI on $\mathbb{R}$.
\end{proposition}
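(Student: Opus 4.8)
The plan is to follow the proof of Proposition~\ref{prop-example-NEDII-line}, replacing the exact identity $S_0(t,s)=e^{|t|-|s|}$ there by an asymptotic version here. Writing $F(t):=\int_0^t f(u)\,du$, solving the scalar linear equation gives $S_f(t,s)x=e^{F(t)-F(s)}x$ for all $t,s\in\mathbb{R}$ and $x\in\mathbb{R}$. With $f_0$ as in \eqref{example-f_0-descontinuous} one has $\int_0^t f_0(u)\,du=|t|$, so the function $\rho(t):=F(t)-|t|=\int_0^t\big(f(u)-f_0(u)\big)\,du$ is continuous with $\rho(0)=0$ and satisfies $\rho(t)-\rho(s)=\int_s^t\big(f(u)-f_0(u)\big)\,du$. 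Since $f(u)-f_0(u)\to 0$ as $u\to\pm\infty$ and $f$ is bounded (being continuous with finite limits at $\pm\infty$), the function $\phi:=f-f_0$ is bounded and, for each $\varepsilon>0$, there is $T_\varepsilon>0$ with $|\phi(u)|\le\varepsilon$ whenever $|u|\ge T_\varepsilon$; splitting $\int_s^t\phi$ over $[-T_\varepsilon,T_\varepsilon]$ and its complement then yields a constant $C_\varepsilon$, independent of $t,s$, with $|\rho(t)-\rho(s)|\le\varepsilon|t-s|+C_\varepsilon$.

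I would then derive the NEDII bounds by combining this estimate with the triangle inequality in the forms $|t|-|s|\le-(t-s)+2|t|$ for $t\ge s$ and $|t|-|s|\le(t-s)+2|t|$ for $t\le s$. Taking $\varepsilon=\tfrac12$ gives
\begin{equation*}
F(t)-F(s)\le-\tfrac12(t-s)+2|t|+C_{1/2}\ \ (t\ge s),\qquad F(t)-F(s)\le\tfrac12(t-s)+2|t|+C_{1/2}\ \ (t\le s),
\end{equation*}
hence $\|S_f(t,s)\|_{\mathcal{L}(\mathbb{R})}=e^{F(t)-F(s)}$ is bounded by $e^{C_{1/2}}e^{2|t|}e^{-\frac12(t-s)}$ for $t\ge s$ and by $e^{C_{1/2}}e^{2|t|}e^{\frac12(t-s)}$ for $t\le s$. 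Since $S_f(t,s)$ is multiplication by a nonzero scalar, both $\Pi^u(t)\equiv 0$ and $\Pi^u(t)\equiv Id_{\mathbb{R}}$ satisfy the invariance and isomorphism requirements of Definition~\ref{def-nonuniform-exp-dichotomy-2}; for the first, \eqref{eq-def-bound-stable} holds with $M=e^{C_{1/2}}$, $\alpha=\tfrac12$, $\delta=2$ and \eqref{eq-def-bound-unstable} is vacuous, and for the second \eqref{eq-def-bound-unstable} holds with $\beta=\tfrac12$, $\nu=2$ and \eqref{eq-def-bound-stable} is vacuous. These are two distinct families of projections, which proves the first assertion.

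For the non-existence of a NEDI I would argue exactly as in Proposition~\ref{prop-example-NEDII-line}: a projection on $\mathbb{R}$ is multiplication by $0$ or $1$, and the invariance relation $\Pi^u(t)S_f(t,s)=S_f(t,s)\Pi^u(s)$ together with $S_f(t,s)\neq 0$ forces $\Pi^u$ to be constant, so $\Pi^u\equiv 0$ or $\Pi^u\equiv Id_{\mathbb{R}}$. The hypotheses on $f$ enter through the fact that $F(t)\to+\infty$ both as $t\to+\infty$ and as $t\to-\infty$ (because $f\ge\tfrac12$ near $+\infty$ and $f\le-\tfrac12$ near $-\infty$). If $\Pi^u\equiv 0$, fixing $s$ and letting $t\to+\infty$ in the stable bound of Definition~\ref{def-nonuniform-exp-dichotomy-1} would force $e^{F(t)-F(s)}\to 0$, contradicting $F(t)\to+\infty$; if $\Pi^u\equiv Id_{\mathbb{R}}$, fixing $s$ and letting $t\to-\infty$ in the unstable bound would force $e^{F(t)-F(s)}\to 0$, again a contradiction. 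Thus $\mathcal{S}_f$ admits no NEDI on $\mathbb{R}$.

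The only genuinely delicate point is the uniform sublinear control of $\rho$, that is, that the discrepancy between $F(t)$ and $|t|$ grows slower than any prescribed linear rate uniformly in $t,s$; this is exactly where continuity of $f$ and the two one-sided limits are used, and it is what allows the exponent $\tfrac12>0$ to survive after absorbing $\rho(t)-\rho(s)$ into the estimate. Everything else is a routine transcription of the piecewise-constant model treated in Proposition~\ref{prop-example-NEDII-line}.
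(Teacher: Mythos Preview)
Your proof is correct. The NEDII part is essentially the paper's argument: both rest on the sublinear estimate $\bigl|\int_s^t(f-f_0)\bigr|\le \varepsilon|t-s|+C_\varepsilon$ and then feed this into the same triangle-inequality bounds $|t|-|s|\le \mp(t-s)+2|t|$ from Proposition~\ref{prop-example-NEDII-line}; the paper phrases this as $S_f(t,s)\le M_\varepsilon S_0(t,s)e^{\varepsilon(t-s)}$ and quotes the earlier result, while you unpack it directly, but the content is identical.

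For the non-existence of a NEDI the two arguments diverge slightly. The paper uses the \emph{reverse} comparison $S_0(t,s)\le M_\varepsilon S_f(t,s)e^{\varepsilon(t-s)}$ to transfer a hypothetical NEDI for $\mathcal{S}_f$ back to $\mathcal{S}_0$, and then invokes Proposition~\ref{prop-example-NEDII-line}. You instead argue directly from $F(t)\to+\infty$ as $t\to\pm\infty$, which immediately rules out both constant projections. Your route is a little more self-contained (no reverse inequality needed); the paper's route is tidier in that it reduces everything to the model case already analysed. Both are short and valid.
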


\begin{proof}
	Let $f_0$ be the function defined in \eqref{example-f_0-descontinuous}. 
	Note that
	\begin{equation*}
	\lim_{|t-s|\to +\infty}\frac{1}{t-s}\int_{s}^{t}|f(r)-f_0(r)|dr=0.
	\end{equation*}
	Then, for any $\epsilon\in (0,1)$, there exists $K_\epsilon>0$ such that
	\begin{equation*}
	\int_{s}^{t}|f(r)-f_0(r)|dr\leq  K_\epsilon+\epsilon|t-s|, \hbox{ for each }t, s\in \mathbb{R},
	\end{equation*}
	which yields to
	\begin{equation}\label{eq-relation-integrals-ergodic}
	\int_{s}^{t}f(r)dr\leq \int_{s}^{t} f_0(r)dr+ K_\epsilon+\epsilon|t-s|, \hbox{ for each }t, s\in \mathbb{R}.
	\end{equation}
	Thus
	\begin{equation}\label{eq-Sf_leq_Sf0}
	S_f(t,s) 1 \leq M_\epsilon S_0(t,s) e^{\epsilon(t-s)},  \ t\geq s,
	\end{equation}
	where $M_\epsilon=e^{K_\epsilon}$.
	Hence, by the proof of Proposition \ref{prop-example-NEDII-line},
	\begin{equation*}
	S_f(t,s) 1 \leq M_\epsilon e^{-(1-\epsilon)(t-s)+2|t|}, \  t\geq s.
	\end{equation*}
	Therefore, $\mathcal{S}_f$ admits a NEDII with projections $\Pi^u(t)=0$ for every $t\in \mathbb{R}$.
	\par We now use \eqref{eq-relation-integrals-ergodic} and Proposition \ref{prop-example-NEDII-line}, with $t\geq s$ replaced by $t\leq s$, to obtain
	 that $\mathcal{S}_f$ admits a NEDII with projections $\Pi^u(t)=Id_\mathbb{R}$, for every $t\in \mathbb{R}$.
	\par Let us prove now that $\mathcal{S}_f$ does not admit NEDI. 
	Suppose that $\mathcal{S}_f$ admits a NEDI with  projections $\Pi^u(t)$ for $t\in \mathbb{R}$. Then, $\Pi^u(t)$ must be constant equal to the null operator or the identity map. First, assume that $\widetilde{\Pi}^u(t)=0$, for every $t\geq 0$. 
	Thus there exist $\widetilde{M}, \tilde{\alpha}>0$ and $\tilde{\delta}\geq0$ such that
	\begin{equation}\label{eq-Sf-NEDI-proof-prof-f}
	S_f(t,s) 1 \leq \widetilde{M} e^{-\tilde{\alpha}(t-s)+\tilde{\delta}|s|}, \ t\geq s.
	\end{equation}
	By similar arguments used to prove \eqref{eq-Sf_leq_Sf0}, it is possible to verify that
	\begin{equation}\label{eq-Sf0_leq_Sf}
	S_0(t,s) 1 \leq M_\epsilon S_f(t,s) e^{\epsilon(t-s)},  \ t\geq s.
	\end{equation}
	Then for $\epsilon\in (0,\tilde{\alpha})$, inequality \eqref{eq-Sf0_leq_Sf} implies that $\mathcal{S}_0$ admits a NEDI, which is a contradiction with Proposition \ref{prop-example-NEDII-line}. 
	By a similar analysis, it is possible to see that if $\mathcal{S}_f$ admits a NEDI with projections
	$\widetilde{\Pi}^u(t)=Id_\mathbb{R}$, for $t\in \mathbb{R}$, then $\mathcal{S}_0$ will also admits a NEDI, which will be a contradiction with Proposition \ref{prop-example-NEDII-line}. The proof is complete. 
\end{proof}

%

The next proposition provides an example of an evolution process that admits a NEDII that do not admit
any NEDI over a half-line.
\begin{proposition}
	Consider the ordinary differential 
	equation
	\begin{equation*}
	\dot{x}=g(t)x, \hbox{ for } t\geq 0,
	\end{equation*}
	where $g$ is the real function defined as
	\begin{equation*}
	g(t)= \left\{ 
	\begin{array}{l l} 
	0
	& \quad \hbox{ if } t\in (0,1],\\
	1
	& \quad \hbox{ if } t\in (n!,(n+1)!\,], \hbox{ for } n=2k, k=0,1,\cdots,
	\\ -n \, 
	& \quad \hbox{ if }  t\in (n!,(n+1)!\,], \hbox{ for } n=2k+1, k=0,1,\cdots.
	\end{array} 
	\right.
	\end{equation*} 
	Then there exists an evolution process  $\mathcal{S}_{g}=\{S_{g}(t,s): t,s\geq 0\}$ such that:
	\begin{enumerate}
		\item $\mathcal{S}_g$ admits a NEDII in $\mathbb{R}^+$ with $X^s_I(1,2)$ and projection $\Pi^u(t)=0$, $t\geq 0$.
		\item $\mathcal{S}_g$ does not admit any NEDI on $\mathbb{R}^+$.
	\end{enumerate}
\end{proposition}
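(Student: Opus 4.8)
The plan is to work with the explicit solution $S_g(t,s)\,1=e^{\int_s^t g(r)\,dr}$, so that $\|S_g(t,s)\|_{\mathcal L(\mathbb R)}=e^{G(t)-G(s)}$ with $G(t):=\int_0^t g(r)\,dr$; since $g$ is measurable and bounded on compact sets, $\mathcal S_g$ is a well-defined (and invertible) linear Carath\'eodory evolution process on $\mathbb R^+$. For the first item the key observation is that $g(r)\le 1$ for every $r\ge 0$: it equals $0$ on $(0,1]$, equals $1$ on the even intervals, and equals $-n\le 1$ on the odd ones. Hence $G(t)-G(s)=\int_s^t g\le t-s$ for $t\ge s$, and therefore, using $-s\le s$ when $s\ge 0$,
\[
\|S_g(t,s)\|_{\mathcal L(\mathbb R)}=e^{G(t)-G(s)}\le e^{\,t-s}\le e^{\,2t-(t-s)}=e^{2|t|}e^{-(t-s)},\qquad t\ge s\ge 0,
\]
which is exactly a NEDII on $\mathbb R^+$ with $\Pi^u(t)\equiv 0$, exponent $\alpha=1$, and bound $M(t)=e^{2|t|}$ (that is, $X^s_{II}(1,2)$), with constant $M=1$.

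For the second item I would first record the values of $G$ at the factorial endpoints. Since the interval $(m!,(m+1)!]$ has length $(m+1)!-m!=m\cdot m!$, one has $G((m+1)!)-G(m!)=m\cdot m!$ when $m$ is even and $G((m+1)!)-G(m!)=-m^2\cdot m!$ when $m$ is odd. Summing these increments and using $\sum_{k=1}^{j}k\cdot k!=(j+1)!-1$, one obtains for odd $m$ the two bounds
\[
G\big((m+2)!\big)-G\big((m+1)!\big)=(m+1)(m+1)!\qquad\text{and}\qquad G\big((m+1)!\big)\le (m!-1)-m^2\,m!\le -(m^2-1)\,m!.
\]
Thus $G$ oscillates at a factorial rate, and this will defeat any bound growing only like $e^{\delta|s|}$.

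Now suppose, for contradiction, that $\mathcal S_g$ admits a NEDI on $\mathbb R^+$ with projections $\{\Pi^u(t)\}$. Since $X=\mathbb R$ is one-dimensional, each $\Pi^u(t)$ equals $0$ or $Id_{\mathbb R}$, and invariance together with the invertibility of $S_g(t,s)$ forces $\Pi^u$ to be constant (as in the earlier propositions of this section). If $\Pi^u\equiv 0$, the stable estimate applied with $s=(m+1)!$ and $t=(m+2)!$ for $m$ odd gives $e^{(m+1)(m+1)!}\le M e^{\delta(m+1)!}e^{-\alpha(m+1)(m+1)!}$; taking logarithms and dividing by $(m+1)!$ yields $(1+\alpha)(m+1)\le \delta+\ln M/(m+1)!$, which is impossible as $m\to\infty$. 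If $\Pi^u\equiv Id_{\mathbb R}$, the unstable estimate (valid for $t<s$, with $S_g(t,s)$ the inverse of $S_g(s,t)$, hence multiplication by $e^{G(t)-G(s)}$) applied with a fixed $t_0\in(0,1]$ and $s=(m+1)!$ for $m$ odd gives $e^{-G((m+1)!)}\le M e^{\nu(m+1)!}e^{\beta(t_0-(m+1)!)}\le M e^{\beta t_0}e^{\nu(m+1)!}$; taking logarithms and using $-G((m+1)!)\ge (m^2-1)\,m!$, then dividing by $m!$, gives $m^2-1\le \nu(m+1)+(\ln M+\beta t_0)/m!$, again impossible as $m\to\infty$. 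Hence $\mathcal S_g$ admits no NEDI on $\mathbb R^+$. The only genuinely delicate step is the bookkeeping for $G$ at the factorial endpoints (keeping track of parities and of which interval each endpoint closes); once that is in place, both contradictions are immediate.
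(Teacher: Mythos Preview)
Your proof is correct and follows essentially the same route as the paper. Part (1) is identical: $g\le 1$ gives $S_g(t,s)1\le e^{t-s}$, and then $t-s=-(t-s)+2t$ yields the NEDII bound with $\alpha=1$, $\delta=2$. For part (2) with $\Pi^u\equiv 0$, your choice $s=(m+1)!$, $t=(m+2)!$ for $m$ odd is the paper's choice $s_n=n!$, $t_n=(n+1)!$ for $n$ even under the reindexing $n=m+1$, and the contradiction is the same. For $\Pi^u\equiv Id_{\mathbb R}$ the paper simply says ``by the same line of arguments'', whereas you supply an explicit pair $t_0\in(0,1]$, $s=(m+1)!$ and the estimate $-G((m+1)!)\ge (m^2-1)m!$; your bookkeeping of $G$ at the factorial endpoints is accurate and makes this case transparent.
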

\begin{proof}
	Note that
	\begin{equation*}
	S_g(t,s) \,1\leq e^{t-s}, \ \ t\geq s\geq 0.
	\end{equation*}
	Since $	t-s\leq -(t-s)+2t$, for $t\geq s\geq 0$, 
	$\mathcal{S}_g$ admits a NEDII with projection $\Pi^u=0$ and exponents
	$\alpha=1$ and $\delta=2$. 
	\par Now, we prove that $\mathcal{S}_g$ does not admit any NEDI.
	Indeed, if $\mathcal{S}_g$ admits NEDI with projection $\Pi^u(\cdot)$ constant equal to $0$ or $Id_\mathbb{R}$.
	Suppose that $\Pi^u(t)=0$, for each $t\geq 0$. This means that  
	there are 
	$M,\alpha,\delta>0$ such that 
	\begin{equation}\label{ex-eq-ned2-not-ned1}
	S_g(t,s)\,1\leq Me^{\delta|s|-\alpha(t-s)}, \hbox{ for all }
	t\geq s\geq 0.
	\end{equation}
	For $n=2k$ for some $k\in\mathbb{N}$, 
	we choose
	$t_n=(n+1)!$ and 
	$s_n=n!$, thus $t_n -s_n=ns_n$.
	Thus, from \eqref{ex-eq-ned2-not-ned1}
	\begin{equation*}
	S_g(t_n,s_n)\, 1=e^{ns_n}\leq Me^{\delta s_n-\alpha(t_n-s_n)}, \hbox{ for all even }
	n.
	\end{equation*}
	Hence, 
	\begin{equation*}
	e^{ns_n(1+\alpha)-\delta s_n}\leq M, \hbox{ for all even } n\in \mathbb{N}.
	\end{equation*}
	which is a contradiction, because the sequence on the right-hand side is not bounded.
	Therefore, $\mathcal{S}_g$ does not admit NEDI with projection $\Pi^u=0$.
	\par Now, if we assume that $\mathcal{S}_g$ admits a NEDI with projection $\Pi^u:=Id_\mathbb{R}$, following the same line of arguments above
	we will obtain a contraction. 
	Therefore,
	$\mathcal{S}_g$ does not admit any
	NEDI and the proof is complete.
\end{proof}

\subsection{NED for invertible evolution processes}
\label{subsec-NED-invertile-ep}
\par In this subsection we study nonuniform exponential dichotomies for 
invertible evolution processes. We provide a relationship between NEDI and NEDII.
As an application we establish a robustness result of NEDII.
\par Before proving the next result we recall the concept of dual operator of a linear operator in a Banach space. For an arbitrary bounded linear functional $x^*\in X^*$ we write $x^*(x):=\langle x,x^*\rangle\in \mathbb{R}$.
\begin{definition}
	Let $A:D(A)\subset X \to X$ be a linear operator such that $D(A)$ is dense in $X$. The \textbf{dual operator} $A^*:D(A^*)\subset X^*\to X^*$ of $A$ is defined by:
	$D(A^*)$ is the set of $x^*\in X^*$ such that there exists $z^*\in X^*$ such that
	\begin{equation}\label{eq-def-dual-operator}
	\langle Ax,x^*\rangle=\langle x, z^*\rangle, \  x\in D(A).
	\end{equation}
	For $x^*\in X^*$ we define $A^*x^*=z^*$ as the only element of $X^*$ that satisfies
	\eqref{eq-def-dual-operator}.
\end{definition}
\par For the next result, we only need to consider the dual operator of an bounded linear operator $A\in \mathcal{L}(X)$. Of course, 
in this situation, $D(A^*)=X^*$ and 
$A^*\in \mathcal{L}(X^*)$.
\par The next result provides a fundamental relation between these two notions of NED for 
invertible evolution processes.
\begin{theorem}\label{th-fundamental-relation-between-type1-type2}
	Let $\mathcal{S}=\{S(t,s): t,s\in \mathbb{J} \}\subset\mathcal{L}(X)$
	be an invertible evolution process in a Banach space $X$.
	Define the bounded linear operator in the dual space $X^*$
	\begin{equation*}
	T(t,s)=[S(s,t)]^*, \hbox{ for all } t,s\in \mathbb{J}.
	\end{equation*}
	Then $\mathcal{T}:=\{T(t,s): t,s\in \mathbb{J}\}$ defines a invertible 
	evolution process in $X^*$.
	\par Additionally, 
	if $\mathcal{S}$ admits a NEDI (NEDII) with bound $M(t)=Me^{\upsilon|t|}$, for $t\in \mathbb{J}$, exponent $\omega>0$, and families of projections $\Pi^u$ and $\Pi^s$, for some $M,\upsilon>0$. Then $\mathcal{T}$ admits a NEDII (NEDI) with bound $M(t)$ and exponent $\omega>0$, and family of projections $\widetilde{\Pi}^u=[\Pi^{s}]^*$ and $\widetilde{\Pi}^s=[\Pi^{u}]^*$, where
	\begin{equation}
	[\Pi^{k}]^*
	:=\{[\Pi^{k}(t)]^*: t\in \mathbb{J}\}, \ \ k=u,s.
	\end{equation}
	
\end{theorem}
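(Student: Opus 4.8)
The strategy is to verify the claim in three stages: (i) show $\mathcal{T}=\{T(t,s)\}$ is an invertible evolution process in $X^{*}$; (ii) check that the candidate projections $\widetilde{\Pi}^{u}=[\Pi^{s}]^{*}$, $\widetilde{\Pi}^{s}=[\Pi^{u}]^{*}$ are genuine projections satisfying the invariance and isomorphism conditions of Definition~\ref{def-nonuniform-exp-dichotomy-2} (resp. \ref{def-nonuniform-exp-dichotomy-1}); (iii) transfer the dichotomy estimates from $\mathcal{S}$ to $\mathcal{T}$ using the two elementary facts that $\|A^{*}\|_{\mathcal{L}(X^{*})}=\|A\|_{\mathcal{L}(X)}$ and $(AB)^{*}=B^{*}A^{*}$ for bounded operators.

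For stage (i): from $S(t,t)=\mathrm{Id}_X$ we get $T(t,t)=\mathrm{Id}_{X^{*}}$, and the cocycle identity $S(t,\tau)=S(t,s)S(s,\tau)$ for $t\ge s\ge\tau$ gives, upon inverting, $S(\tau,t)=S(\tau,s)S(s,t)$, hence $T(t,\tau)=[S(\tau,t)]^{*}=[S(s,t)]^{*}[S(\tau,s)]^{*}=T(t,s)T(s,\tau)$; invertibility of each $T(t,s)$ is immediate since $[S(s,t)]^{*}$ has inverse $[S(t,s)]^{*}$. Continuity in $(t,s,x^{*})$ requires a word: strong continuity of $(t,s)\mapsto S(t,s)$ together with local boundedness yields strong continuity of $(t,s)\mapsto S(s,t)$, and passing to adjoints preserves strong continuity on bounded sets — this is the only mildly technical point in stage (i), and it is standard.

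For stage (ii): $[\Pi^{s}]^{*}$ is a projection because $(\Pi^{s})^{2}=\Pi^{s}$ implies $([\Pi^{s}]^{*})^{2}=[(\Pi^{s})^{2}]^{*}=[\Pi^{s}]^{*}$, and $\widetilde{\Pi}^{u}+\widetilde{\Pi}^{s}=[\Pi^{s}+\Pi^{u}]^{*}=\mathrm{Id}_{X^{*}}$. The invariance $\widetilde{\Pi}^{u}(t)T(t,s)=T(t,s)\widetilde{\Pi}^{u}(s)$ is the adjoint of $\Pi^{s}(s)S(s,t)=S(s,t)\Pi^{s}(t)$ (for $t\ge s$), which in turn follows by inverting the invariance relation for $\mathcal{S}$ on the appropriate ranges. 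The isomorphism condition on $R(\widetilde{\Pi}^{u}(t))$ follows from the corresponding one for $\mathcal{S}$ by duality, using that the adjoint of an isomorphism between closed subspaces is an isomorphism between the duals.

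For stage (iii): to estimate $\|T(t,s)\widetilde{\Pi}^{s}(s)\|$ for $t\ge s$, note $T(t,s)\widetilde{\Pi}^{s}(s)=[S(s,t)]^{*}[\Pi^{u}(s)]^{*}=[\Pi^{u}(s)S(s,t)]^{*}=[S(s,t)\Pi^{u}(t)]^{*}$, so its norm equals $\|S(s,t)\Pi^{u}(t)\|_{\mathcal{L}(X)}$; writing $\tau=t\ge\sigma=s$ so that $\sigma<\tau$, the NEDI bound for $\mathcal{S}$ on the unstable part, $\|S(\sigma,\tau)\Pi^{u}(\tau)\|\le Me^{\upsilon|\tau|}e^{\beta(\sigma-\tau)}$, reads $Me^{\upsilon|t|}e^{-\beta(t-s)}$ — which is precisely a NEDII stable bound with the final time $t$ in the exponent of $M$. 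The unstable estimate for $\mathcal{T}$ is handled symmetrically, turning the NEDI stable bound of $\mathcal{S}$ (with initial time) into a NEDII unstable bound (with final time). The NEDII$\to$NEDI direction is entirely analogous, reversing the roles. The main obstacle is bookkeeping: keeping straight which time variable ($t$ vs.\ $s$, initial vs.\ final) ends up in the growth factor after the inversion $S(t,s)\mapsto S(s,t)$ swaps the order — this swap is exactly what converts ``bound depends on initial time'' into ``bound depends on final time'' and is the conceptual heart of the theorem; I would lay it out carefully in a short table or display rather than risk an index error.
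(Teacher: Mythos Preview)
Your proposal is correct and follows essentially the same route as the paper's proof: verify the evolution-process axioms for $\mathcal{T}$ via duality, set $\widetilde{\Pi}^{s}=[\Pi^{u}]^{*}$, and obtain the estimates from $\|T(t,s)\widetilde{\Pi}^{s}(s)\|=\|[S(s,t)\Pi^{u}(t)]^{*}\|=\|S(s,t)\Pi^{u}(t)\|$, which converts the NEDI unstable bound (indexed by the initial time) into the NEDII stable bound (indexed by the final time). If anything you are slightly more thorough than the paper, which omits the explicit verification of the isomorphism condition on $R(\widetilde{\Pi}^{u})$ that you include in stage~(ii).
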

\begin{proof}
	Lets first show that $\mathcal{T}$ defines an evolution process in $X^*$.
	Let $t,s,\tau\in \mathbb{J}$ then
	\begin{equation*}
	T(t,t)=[S(t,t)]^*=[Id_X]^*=Id_{X^*},
	\end{equation*}
	and also 
	\begin{equation*}
	T(t,s)T(s,\tau)=[S(s,t)]^*[S(\tau,s)]^*
	=[S(\tau,s)S(s,t)]^*=T(t,\tau),
	\end{equation*}
	where we use duality properties and that $\mathcal{S}$ is an evolution process.
	Now, let 
	$(t_n,s_n,x_n^*)$ be a sequence in $\mathbb{J}^2\times X^*$ such that
	$(t_n,s_n)x_n\to (t,s,x^*)$ as $n\to +\infty$,
	we will prove that
	$T(t_n,s_n)x_n^*\to T(t,s)x^*$
	as $n\to +\infty$.
	First, note that
	\begin{eqnarray*}
		\|T(t_n,s_n)x_n^*-T(t,s)x^*\|_{\mathcal{L}(X^*)}
		&=&
		\sup_{\|x\|_X=1}|\langle x,T(t_n,s_n)x_n^*\rangle-\langle x,T(t,s)x^*\rangle|\\
		&=&
		\sup_{\|x\|_X=1}|\langle S(s_n,t_n)x,x_n^*\rangle-\langle S(s,t)x,x^*\rangle|.
	\end{eqnarray*}
	For any $x\in X$, we have that 
	\begin{eqnarray*}
		& &|\langle S(s_n,t_n)x,x_n^*\rangle-\langle S(s,t)x,x^*\rangle|\\
		& &\leq
		|\langle S(s_n,t_n)x-S(s,t)x,x_n^*\rangle|
		+
		|\langle S(s,t)x,x_n^*-x^*\rangle|\\
		&  &\leq 
		\|x_n^*\|_{X^*}\, \|S(s_n,t_n)x-S(s,t)x\|_X
		+
		\|S(s,t)x\|_X \, \|x_n^*-x^*\|_{X^*}.
	\end{eqnarray*}
	Since $\{x_n^*\}$ is a bounded sequence in $X^*$, 
	to obtain that
	\begin{equation*}
	\lim_{n\to +\infty} \|T(t_n,s_n)x_n^*-T(t,s)x^*\|_{\mathcal{L}(X^*)}=0.
	\end{equation*}
	Therefore, $\mathcal{T}$ define an invertible evolution process in $X^*$.
	\par Now, assuming that $\mathcal{S}$ admits a NEDI and we claim that
	$\mathcal{T}$ admits a NEDII.
	\par Indeed, since $\mathcal{S}$ admits a NEDI, there exists a family of projections
	$\{\Pi^u(t)\, :\,t\in \mathbb{J} \}$ such that satisfies the conditions in Definition \ref{def-nonuniform-exp-dichotomy-1} for $\mathcal{S}$.
	\par Define $\widetilde{\Pi}^s(t):=[\Pi^u(t)]^*$ for all $t\in \mathbb{J}$. Then 
	$\{\widetilde{\Pi}^s(t):t\in \mathbb{J} \}$ is a family of projections on $X^*$ such that
	\begin{equation}
	T(t,s)\widetilde{\Pi}^s(s)
	=[S(s,t)]^*[\Pi^u(s)]^*
	=[\Pi^u(s) S(s,t)]^*,
	\end{equation}
	Since $\Pi^u(s) S(s,t)=S(s,t)\Pi^u(t)$, we conclude that
	$T(t,s)\widetilde{\Pi}^s(s) \widetilde{\Pi}^s(t)T(t,s)$. 
	\par Moreover, 
	\begin{eqnarray*}
	\|T(t,s)\widetilde{\Pi}^s(s)\|_{\mathcal{L}(X^*)}
	&=&\|[S(s,t)\Pi^u(t)]^*\|_{\mathcal{L}(X^*)}\\
	&=&\|S(s,t)\Pi^u(t)\|_{\mathcal{L}(X)}\\
	&\leq&
	Me^{\upsilon |t|} e^{\omega(s-t)}, \hbox{ for }t\geq s,
	\end{eqnarray*}
	and, if $\widetilde{\Pi}^u(t)=Id_{X^*}-\widetilde{\Pi}^s(t)$, we obtain that
	\begin{eqnarray*}
	\|T(t,s)\widetilde{\Pi}^u(s)\|_{\mathcal{L}(X^*)}
	&=&\|[S(s,t)\Pi^s(t)]^*\|_{\mathcal{L}(X^*)}\\
	&\leq&
	Me^{\upsilon |t|} e^{-\omega(s-t)}, \hbox{ for } t\leq s.
	\end{eqnarray*}
	Then, according to the inequalities above, 
	$\mathcal{T}$ admits NEDII
	on $\mathbb{J}$ with exponent $\omega>0$ and bound
	$Me^{\upsilon |t|}$, $t\in \mathbb{R}$.
	\par Finally, if $\mathcal{S}$ admits a NEDII following the same line of arguments of the above proof we conclude that
	$\mathcal{T}$ admits a NEDI, with the same relations between projections, bound and exponent.
\end{proof}

\par As an application of Theorem 
\ref{th-fundamental-relation-between-type1-type2}
we provide conditions obtain that 
NEDII is stable under perturbation. 
\par First, we borrow a robustness result for NEDI under perturbation, presented in Caraballo \textit{et al.} \cite[Theorem 3.11]{Caraballo-Carvalho-Langa-Sousa}.

\begin{theorem}[Robustness of NEDI]
	\label{th-roughness-continuous-TED}
	Let $\mathcal{S}=\{S(t,s): t\geq s\}$ be a linear evolution process which
	admits a NEDI 
	with bound $M(s)=Me^{\upsilon |s|}$ and exponent $\omega>0$ for some 
	$M>0$, and 
	$0<\upsilon<\omega$. 
	Suppose that $\mathcal{S}$ satisfies
	\begin{equation}\label{th-roughness-continuous-TED-hypothesis1}
	L_\mathcal{S}:=\sup_{0\leq t-s\leq 1} \{e^{-\upsilon |t|} \|S(t,s)\|_{\mathcal{L}(X)}\}<+\infty.
	\end{equation}
	Then there exists $\epsilon>0$ such that if 
	$\mathcal{T}=\{T(t,s)\, :\, t\geq s\}$
	is another evolution process such that
	\begin{equation}\label{th-roughness-continuous-TED-hypothesis2}
	\sup_{0\leq t-s\leq 1}\{ e^{\upsilon |s|} \|S(t,s)-T(t,s)\|_{\mathcal{L}(X)} \}<\epsilon,
	\end{equation}
	then $\mathcal{T}$ admits a NEDI with 
	exponent 
	$\hat{\omega}:=\tilde{\omega}-\omega>0$ and
	bound
	\begin{equation*}
	\widehat{M}(s)=
	\widehat{M}^2 e^{2\tilde{\omega}} \max\{L_\mathcal{T}, L_\mathcal{T}^2\}
	\,	e^{2\upsilon|s|},
	\end{equation*}
	where
	\begin{equation*}
	\tilde{\omega}=-\ln(\cosh \omega - [\cosh ^2 \omega-1-2\epsilon\sinh \omega]^{1/2}),
	\end{equation*}
	$\widehat{M}:=M(1+\epsilon/(1-\rho)(1-e^{-\omega}))\max\{M_1,M_2\}$, and
	$\rho:=\epsilon(1+e^{-\omega})/(1-e^{-\omega})$,
	$M_1:=[1-\epsilon e^{-\omega}/(1-e^{-\omega-\tilde{\omega}})]^{-1}$,
	$M_2:=[1-\epsilon e^{-\tilde{\beta}}/(1-e^{-\omega-\tilde{\beta}})]^{-1}$
	and
	$\tilde{\beta}:=\tilde{\omega}+\ln(1+2\epsilon\sinh\omega)$.
\end{theorem}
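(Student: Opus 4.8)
The plan is to run the classical Perron / Green-operator roughness scheme in discrete time, absorbing the nonuniform weight $e^{\upsilon|\cdot|}$ into the underlying sequence space; the gap condition $0<\upsilon<\omega$ is exactly what keeps the relevant series summable, and the sharp exponent will come out of the usual scalar threshold computation. \emph{Step 1 (discretization).} First I would replace $\mathcal{S}$ and $\mathcal{T}$ by the discrete cocycles $\{S(n,m):n\ge m,\ n,m\in\mathbb{Z}\}$ and $\{T(n,m)\}$. Writing a general propagator as a product over unit intervals and using \eqref{th-roughness-continuous-TED-hypothesis1}, one checks that the NEDI of $\mathcal{S}$ restricts to a discrete NEDI with comparable bound and exponent, and, conversely, that any discrete NEDI for $\{T(n,m)\}$ interpolates back to a continuous NEDI for $\mathcal{T}$, the interpolation costing only a factor controlled by $L_{\mathcal{T}}:=\sup_{0\le t-s\le1}e^{-\upsilon|t|}\|T(t,s)\|_{\mathcal{L}(X)}$, which is finite by \eqref{th-roughness-continuous-TED-hypothesis2} and the triangle inequality. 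Moreover \eqref{th-roughness-continuous-TED-hypothesis2} yields the discrete perturbation bound $\|S(n+1,n)-T(n+1,n)\|_{\mathcal{L}(X)}\le\epsilon\,e^{-\upsilon|n|}$. So it suffices to prove a discrete roughness statement.

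\emph{Step 2 (weighted fixed point).} For the discrete NEDI of $\mathcal{S}$ introduce the Green operator
\begin{equation*}
G(n,m)=
\begin{cases}
S(n,m)\Pi^s(m), & n\ge m,\\
-S(n,m)\Pi^u(m), & n<m,
\end{cases}
\end{equation*}
which satisfies $\|G(n,m)\|_{\mathcal{L}(X)}\le M e^{\upsilon|m|}e^{-\omega|n-m|}$. I would then obtain the perturbed invariant families of $\mathcal{T}$ as graphs of fixed points, in the weighted Banach space $\ell^\infty_\upsilon=\{x=(x_n)_{n\in\mathbb{Z}}:\ \|x\|_\upsilon:=\sup_n e^{-\upsilon|n|}\|x_n\|_X<\infty\}$, of the affine map
\begin{equation*}
(\mathcal{F}_\xi x)_n=S(n,n_0)\,\xi+\sum_{m\in\mathbb{Z}}G(n,m)\bigl(T(m+1,m)-S(m+1,m)\bigr)x_m ,
\end{equation*}
where $\xi$ ranges over $R(\Pi^s(n_0))$ or $R(\Pi^u(n_0))$. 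The decisive estimate is that, since $|m|\le|n|+|n-m|$,
\begin{equation*}
e^{-\upsilon|n|}\,\|G(n,m)\|_{\mathcal{L}(X)}\,\|(T-S)(m+1,m)\|_{\mathcal{L}(X)}\,e^{\upsilon|m|}\le M\epsilon\,e^{-(\omega-\upsilon)|n-m|},
\end{equation*}
so the linear part of $\mathcal{F}_\xi$ has norm on $\ell^\infty_\upsilon$ at most $M\epsilon\,(1+e^{-(\omega-\upsilon)})/(1-e^{-(\omega-\upsilon)})$ and is a contraction once $\epsilon$ is small --- this is the only point where $\upsilon<\omega$ enters. The $n_0$-th coordinates of the resulting fixed points span the ranges of the perturbed projections $\widehat{\Pi}^u,\widehat{\Pi}^s$, whose norms are bounded by a Neumann-series quantity of the $(1-\rho)^{-1}\max\{M_1,M_2\}$ type appearing in the statement.

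\emph{Step 3 (sharp exponent and conclusion).} To extract the precise exponent of the statement rather than merely some positive number, I would rerun the estimate of Step 2 against a trial weight $e^{-\tilde\omega|n|}$ in place of $e^{-\omega|n|}$ and ask for which $\tilde\omega$ the perturbed Green operator is still a contraction; this reduces to the scalar quadratic inequality $\lambda^2-(e^{\omega}+e^{-\omega})\lambda+1+2\epsilon\sinh\omega\le0$ for $\lambda=e^{-\tilde\omega}$, whose smaller root is precisely $\cosh\omega-[\cosh^2\omega-1-2\epsilon\sinh\omega]^{1/2}$, and the companion estimate for the unstable direction produces $\tilde\beta$. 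Undoing the discretization via Step 1 then gives a continuous NEDI for $\mathcal{T}$ with the exponent $\hat\omega$ and bound $\widehat{M}(s)$ of the statement: the various factors $e^{\upsilon|\cdot|}$ accumulated in the Green-function bound and in the unit-interval interpolation combine into $\widehat{M}(s)$ (explaining the exponential order $e^{2\upsilon|s|}$ and the constants $\widehat{M},\rho,M_1,M_2,L_{\mathcal{S}},L_{\mathcal{T}}$), while the portions that cannot be absorbed into constants are paid out of the exponent. The hardest part will be nothing conceptual but this nonuniform bookkeeping: one must check that every $e^{\upsilon|\cdot|}$ produced along the way is dominated by a strictly faster $e^{-\omega|\cdot|}$, so that all the geometric series converge and $\widehat{\Pi}^u,\widehat{\Pi}^s$ are genuine bounded operators obeying a bound of the asserted exponential growth.
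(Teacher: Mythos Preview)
The paper does not supply its own proof of this theorem: it is quoted verbatim from Caraballo \textit{et al.} \cite[Theorem 3.11]{Caraballo-Carvalho-Langa-Sousa} and then used as a black box to derive the NEDII robustness result (Theorem~\ref{th-robustness-invertible-NED2}). Your sketch --- discretize, set up the Green operator on a weighted $\ell^\infty_\upsilon$ space, run the contraction argument using $\upsilon<\omega$, and read off the sharp exponent from the scalar quadratic --- is precisely the scheme behind that cited result, as the discrete-flavoured constants $e^{-\omega}$, $(1-e^{-\omega})^{-1}$, $\rho$, $M_1$, $M_2$ in the statement already suggest. So there is nothing to compare: your outline matches the intended argument, and the only work left is the bookkeeping you identify in Step~3.
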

\par Now, as a consequence of Theorem \ref{th-roughness-continuous-TED} and 
Theorem \ref{th-fundamental-relation-between-type1-type2} we prove a robustness result for NEDII.
\begin{theorem}[Robustness of NEDII]
	\label{th-robustness-invertible-NED2}
	Let $\mathcal{S}_1=\{S_1(t,s): t,s\in \mathbb{R}\}$ 
	be an invertible evolution process that 
	admits a NEDII with bound $M(t)=Me^{\upsilon|t|}$, $t\in \mathbb{R}$, for some $M,\omega>0$, and exponent $\omega>\upsilon$.
	Suppose that $\mathcal{S}_1$ satisfies
	\begin{equation}\label{eq-th-robustness-invertible-NED2-1}
	\sup_{0\leq t-s\leq 1} 
	\{e^{-\upsilon|t| }\, \|S_1(t,s)\|_{\mathcal{L}(X)}\}<+\infty.
	\end{equation}
	Then there exists $\epsilon>0$ such that if $\mathcal{S}_2$
	is another invertible evolution process such that
	\begin{equation}\label{eq-th-robustness-invertible-NED2-2}
	\sup_{0\leq t-s\leq 1} 
	\{e^{\upsilon|s|}\, \|S_1(t,s)-S_2(t,s)\|_{\mathcal{L}(X)}\}<\epsilon.
	\end{equation}
	Then $\mathcal{T}_2:=\{T_2(t,s)=[S_2(s,t)]^*:t,s\in \mathbb{R} \}$ admits a NEDI with exponent
	$\hat{\omega}>0$ and bound $\widehat{M}$ provided in Theorem \ref{th-roughness-continuous-TED}
	for $\epsilon$ small enough.
	\par Additionally, if $X$ is reflexive, then $\mathcal{S}_2$ admits a NEDII with the same bound and exponent of $\mathcal{T}_2$.
\end{theorem}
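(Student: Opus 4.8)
The plan is to reduce the statement to the already-established robustness of NEDI (Theorem~\ref{th-roughness-continuous-TED}) via the dual correspondence of Theorem~\ref{th-fundamental-relation-between-type1-type2}. First I would apply Theorem~\ref{th-fundamental-relation-between-type1-type2} to the invertible evolution process $\mathcal{S}_1$: since $\mathcal{S}_1$ admits a NEDII with bound $M(t)=Me^{\upsilon|t|}$ and exponent $\omega>\upsilon$, the dual process $\mathcal{T}_1:=\{T_1(t,s)=[S_1(s,t)]^*:t,s\in\mathbb{R}\}$ is an invertible evolution process on $X^*$ which admits a NEDI with the \emph{same} bound $M(t)=Me^{\upsilon|t|}$ and the \emph{same} exponent $\omega$, and with families of projections obtained by dualizing those of $\mathcal{S}_1$. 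In particular $0<\upsilon<\omega$, so the hypothesis of Theorem~\ref{th-roughness-continuous-TED} on the exponents is met for $\mathcal{T}_1$.

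Second, I must check that the two quantitative hypotheses of Theorem~\ref{th-roughness-continuous-TED} transfer from $\mathcal{S}_1$ to $\mathcal{T}_1$. For \eqref{th-roughness-continuous-TED-hypothesis1}: for $0\le t-s\le1$ we have $\|T_1(t,s)\|_{\mathcal{L}(X^*)}=\|[S_1(s,t)]^*\|_{\mathcal{L}(X^*)}=\|S_1(s,t)\|_{\mathcal{L}(X)}$, and since $\mathcal{S}_1$ is invertible and satisfies \eqref{eq-th-robustness-invertible-NED2-1}, this is finite (one uses that the norm of $S_1(s,t)$, the inverse over the whole space here, is controlled on the unit-time strip; note $|t|$ and $|s|$ differ by at most $1$ so the exponential weights are comparable). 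For the perturbation bound \eqref{th-roughness-continuous-TED-hypothesis2}: given $\mathcal{S}_2$ satisfying \eqref{eq-th-robustness-invertible-NED2-2}, set $\mathcal{T}_2:=\{T_2(t,s)=[S_2(s,t)]^*\}$; then for $0\le t-s\le1$,
\begin{equation*}
\|T_1(t,s)-T_2(t,s)\|_{\mathcal{L}(X^*)}=\|S_1(s,t)-S_2(s,t)\|_{\mathcal{L}(X)},
\end{equation*}
and I would use the resolvent-type identity $S_1(s,t)-S_2(s,t)=S_1(s,t)\big(S_2(t,s)-S_1(t,s)\big)S_2(s,t)$ (valid for invertible evolution processes) together with \eqref{eq-th-robustness-invertible-NED2-1}, \eqref{eq-th-robustness-invertible-NED2-2}, and the comparability of the weights $e^{\upsilon|t|}$ and $e^{\upsilon|s|}$ on the unit strip, to conclude that $\sup_{0\le t-s\le1}\{e^{\upsilon|s|}\|S_1(t,s)-S_2(t,s)\|_{\mathcal{L}(X)}\}$ can be made smaller than the $\epsilon$ produced by Theorem~\ref{th-roughness-continuous-TED} applied to $\mathcal{T}_1$, by shrinking the $\epsilon$ in \eqref{eq-th-robustness-invertible-NED2-2}. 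Then Theorem~\ref{th-roughness-continuous-TED} yields that $\mathcal{T}_2$ admits a NEDI with exponent $\hat\omega>0$ and bound $\widehat{M}$ as stated.

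Third, for the reflexive case: if $X$ is reflexive then $X^*$ is also reflexive, so $\mathcal{S}_2$ can be recovered from $\mathcal{T}_2$ by the same dual construction, i.e. $S_2(t,s)=[T_2(s,t)]^*$ under the canonical identification $X=X^{**}$. Since $\mathcal{T}_2$ is an invertible evolution process admitting a NEDI, Theorem~\ref{th-fundamental-relation-between-type1-type2} applied to $\mathcal{T}_2$ gives that its dual $\mathcal{S}_2$ admits a NEDII with the same bound and exponent as the NEDI of $\mathcal{T}_2$. I expect the main obstacle to be bookkeeping: carefully tracking that the final-time weight $e^{\upsilon|t|}$ in the NEDII becomes, after dualizing and restricting $S_1(s,t)$, the \emph{same} weight with $|t|$ playing the role it must in Theorem~\ref{th-roughness-continuous-TED}, and verifying the resolvent identity holds with the correct time orderings — these are the places where a sign or a swap of $s$ and $t$ could go wrong, and where one must use $\big||t|-|s|\big|\le|t-s|\le1$ to absorb the mismatch into constants.
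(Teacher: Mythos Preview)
Your approach is essentially the same as the paper's: dualize $\mathcal{S}_1$ via Theorem~\ref{th-fundamental-relation-between-type1-type2} to obtain $\mathcal{T}_1$ with a NEDI, verify the hypotheses of Theorem~\ref{th-roughness-continuous-TED} for $\mathcal{T}_1$, apply it to conclude $\mathcal{T}_2$ has a NEDI, and in the reflexive case dualize back (via $X\cong X^{**}$) to get a NEDII for $\mathcal{S}_2$. The only difference is in the hypothesis-transfer step: the paper simply writes $\|T_1(t,s)\|_{\mathcal{L}(X^*)}=\|S_1(s,t)\|_{\mathcal{L}(X)}$ and $\|T_1(t,s)-T_2(t,s)\|_{\mathcal{L}(X^*)}=\|S_1(s,t)-S_2(s,t)\|_{\mathcal{L}(X)}$ and reads these off directly from \eqref{eq-th-robustness-invertible-NED2-1} and \eqref{eq-th-robustness-invertible-NED2-2}, without introducing the resolvent-type identity you propose; your extra bookkeeping with that identity and with the comparability of $e^{\upsilon|t|}$ and $e^{\upsilon|s|}$ on the unit strip is not present in the paper's argument.
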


\begin{proof}
	Let $\mathcal{T}_1=\{T_1(t,s): t,s\in \mathbb{R}\}$ be the evolution process over
	$X^*$ defined by 
	$T_1(t,s):=[S_1(s,t)]^*$ for all $t,s\in \mathbb{R}$.
	\par Then, from Theorem 
	\ref{th-fundamental-relation-between-type1-type2},
	$\mathcal{T}_1$ admits a NEDI with bound $M(t)=Me^{\upsilon|t|}$ and exponent  $\omega>\upsilon$.
	From \eqref{eq-th-robustness-invertible-NED2-1},
	$\mathcal{T}_1$ satisfies
	\begin{equation*}
	\sup_{0\leq t-s\leq 1} 
	\{e^{-\upsilon|t| }\, \|T_1(t,s)\|_{\mathcal{L}(X^*)}\}=
	\sup_{0\leq t-s\leq 1} 
	\{e^{-\upsilon|t| }\, \|S_1(s,t)\|_{\mathcal{L}(X)}\}<+\infty.
	\end{equation*}
	Therefore, by Theorem \ref{th-roughness-continuous-TED},
	there exists $\epsilon>0$ such that
	if $\mathcal{T}=\{T(t,s):t,s\in \mathbb{R} \}$ 
	is an evolution process over $X^*$ such that
	\begin{equation}\label{eq-proof-robustness-invertible-NED2}
	\sup_{0\leq t-s\leq 1} 
	\{e^{\upsilon|s|}\, \|T_1(t,s)-T(t,s)\|_{\mathcal{L}(X^*)}\}<\epsilon,
	\end{equation}
	then $\mathcal{T}$ admits NEDI with exponents 
	$\hat{\omega}$ and bound $\widehat{M}$, given by Theorem \ref{th-roughness-continuous-TED}.
	\par Let $\mathcal{S}_2$ be an evolution process over $X$
	such that satisfies equation
	\eqref{eq-th-robustness-invertible-NED2-2}. 
	Hence
	$T_2(t,s):=[S_2(s,t)]^*$ defines an evolution process $\mathcal{T}_2$ over
	$X^*$ satisfying
	\eqref{eq-proof-robustness-invertible-NED2}.
	Thus
	$\mathcal{T}_2$ admits a NEDI with exponent
	$\hat{\omega}>0$ and bound $\widehat{M}$, which finishes the first part of the proof.
	\par Finally, we assume that $X$ is reflexive. Let $J:X\to X^{**}$ be the evaluation map, i.e., $J$ is defined by $x\mapsto Jx\in X^{**}$, 
	where $\langle x^*,Jx(x^{**})\rangle=\langle x,x^*\rangle$, for every $x^*\in X^*$. Since $X$ is reflexive, $J$ is an isometric isomorphism, 
	and $\mathcal{S}_2$ satisfies
	\begin{equation}
	S_2(t,s)=J^{-1}[S_2(t,s)]^{**}J, \hbox{ for every }  t,s\in \mathbb{R}.
	\end{equation} 
	Now, from the first part of the proof
	 $\mathcal{S}_2^*=\{[S_2(t,s)]^*: t,s\in  \mathbb{R}\}$ admits a NEDI. Hence
	 Theorem 
	\ref{th-fundamental-relation-between-type1-type2} implies that 
	$\mathcal{S}_2^{**}=\{[S_2(t,s)]^{**}: t,s\in \mathbb{R}\}$ admits a NEDII with bound $\widehat{M}$, exponent $\hat{\omega}>0$, and family of projections $\{\widehat{\Pi}^u(t):t\in \mathbb{R}\}$. 
	Then, it is straightforward to verify that $\mathcal{S}_2$ admits a NEDII with bound $\widehat{M}$ and exponent $\hat{\omega}$, and projections
	$\Pi^u(t)=J^{-1}\widehat{\Pi}^u(t)J$ for $t\in \mathbb{R}$, and the proof is complete.
\end{proof}

\begin{remark}\label{remark-NEDII>NEDI_line}
	\par There are evolution processes such that
	has a NEDII with bound $M_2(t)=Me^{\upsilon_2|t|}$ and exponent $\omega_2>\upsilon_2$
	and admits a NEDI bound $K_1(t)=Me^{\upsilon_1|t|}$ and exponent $\omega_1<\upsilon_1$.
	For those it is possible apply the robustness result of NEDII, Theorem \ref{th-robustness-invertible-NED2}, and it is not possible to apply the robustness result of NEDI, Theorem \ref{th-roughness-continuous-TED}, because of the conditions on the exponents $\omega_1<\delta_1$, see Example \ref{example-NEDII_w2>y2_w1<y1}.
	\par Therefore, we established a robustness result of NEDII that can be applied in a situation where the robustness of NEDI, namely Theorem \ref{th-roughness-continuous-TED}, can not be applied, which reinforces the importance of the nonuniform exponential dichotomy of type II.
\end{remark}

\par Next, we provide an example of an invertible evolution process in $\mathbb{R}$, with the proprieties describe in Remark \ref{remark-NEDII>NEDI_line}.
\begin{example}\label{example-NEDII_w2>y2_w1<y1}
	Let $a,b,c,d>0$ with $b>a$ and $d>c$. 
	Consider the real function
	\begin{equation*}
	f(t)= \left\{ 
	\begin{array}{l l} 
	-b-at\sin(t), 
	&  \quad \hbox{if } t\geq 0,
	\\ -d-ct\sin(t),\, 
	& \quad \hbox{if } t< 0.
	\end{array} 
	\right.
	\end{equation*}
	Let  $\mathcal{T}=\{T(t,s):t,s\in \mathbb{R}\}$ be the evolution process induced by
	$\dot{x}(t)=f(t)x(t)$. 
	Then, from Example \ref{Example-Barreira}, 
	it is straightforward to verify that $\mathcal{T}$
	admits 
	NEDII with $X^s_{II}(\alpha_2,\delta)$ and $\Pi^u_{II}=0$,
	and
	a NEDI with $X^s_I(\alpha_1,\delta)$, and $\Pi^u_I=0$,
	where
	\begin{equation*}
	\alpha_2=\min\{b+a,d-c\}, \ 
	\alpha_1=\min\{b-a,d+c\}
	\hbox{ and }
	\delta=\max\{2a,2c\}.
	\end{equation*}
	In particular, $\mathcal{T}$ satisfies condition \ref{eq-th-robustness-invertible-NED2-1}.
	\par Note that it is possible to choose $a,b,c,d$ such that
	$\alpha_2>\delta$ and
	$\alpha_1<\delta$, for instance: $d<1/2$, $a>1$, and $b\in (a+1,3a)$.
	Thus, for these choices, it is possible to apply the Robustness of NEDII, namely Theorem \ref{th-robustness-invertible-NED2},
	and it is not possible to apply the Robustness of NEDI, Theorem \ref{th-roughness-continuous-TED}. 
	Therefore, in this case, we know for sure that NEDII is persists under perturbation and we do not know if NEDI does. 
	\par Of course, a symmetric claim holds for NEDI: there are $a,b,c,d$ such that 
	$\alpha_1>\delta$ and $\alpha_2<\delta$. Therefore, together, NEDI and NEDII, provides an completely analysis of existence of an nonuniform exponential dichotomy for $\dot{x}=f(t)x$ and whenever this type of nonuniform hyperbolicity is preserved under perturbation. 
	
\end{example}

\section{Existence of forward and pullback attractors}
\label{sec-existence-attractors}
\par In this section we study asymptotic behavior for evolution processes in a metric space $(X,d)$. The goal is to present a self contained theory
that will be appropriated for our applications on ordinary and parabolic differential equations. Hence, we study forward and pullback attraction and provide some simple results on the existence of \textit{forward attractor} and \textit{pullback attractors}.
Our results are inspired by
 Chepyzhov and Vishik \cite{Chepyzhov-Vishik} and Caraballo \textit{et al.} \cite{Caraballo-Kloeden-Real}, for forward attractions, and by 
 Carvalho \textit{et al.} \cite{Carvalho-Langa-Robison-book} and Mar\'in-Rubio and Real \cite{Marin-Real}, for pullback attraction. 
 
\par For both senses of attraction, we will need a proper definition of distance, the \textit{Hausdorff semi-distance }between sets of $X$.
\begin{definition}
	Let $A,B$ be subsets of $X$ the \textbf{Hausdorff semi-distance} between $A$ and $B$ is defined as 
	\begin{equation*}
	dist(A,B)=\sup_{a\in A}\inf_{b\in B}\,  d(a,b).
	\end{equation*}
\end{definition}
\subsection{Forward attraction}
\label{subsec-forward-attraction}
\par In this subsection we introduce some notions of forward attraction and study conditions to obtain the existence of \textit{forward attractors} for an evolution process $\mathcal{S}=\{S(t,s): t\geq s, \  t,s\in\mathbb{J}\}$ in a metric space $X$, where $\mathbb{J}$ is $\mathbb{R}$ or $\mathbb{R}^+$. 
\begin{definition}
	Let $\mathcal{S}$ be an evolution process and $\{D(t):t\in \mathbb{J}\}$ a family of nonempty subsets of $X$ and $B$ a subset of $X$. We say that $\{D(t):t\in \mathbb{J}\}$ \textbf{forward absorbs} $B$, if for any arbitrary $\tau \in \mathbb{J}$ there exists a $t_0=t_0(\tau,B)\geq \tau$ such that
	\begin{equation*}
	S(t,\tau)B\subset D(t), \ \hbox{ for every } t\geq t_0.
	\end{equation*}
	\par In particular, we say that a set $D$ \textbf{forward absorbs} $B\subset X$ if
	the single-point family $\{D(t)=D: t\in \mathbb{J}\}$ forward absorbs $B$.
\end{definition}
\par A fundamental tool on the existence of forward attractor is the notion of a \textit{forward $\omega$-limit set}.
\begin{definition}
	Let $B$ be a subset of $X$ and $\tau\in \mathbb{J}$. The \textbf{forward $\omega$-limit set of $B$ at $\tau$} is
	\begin{equation*}
	\omega_F(B,\tau)=\bigcap_{s\geq 0} \overline{\bigcup_{t\geq s} S(t+\tau,\tau) B}.
	\end{equation*}
\end{definition}
Another way to characterize a forward $\omega$-limit set is the following.
\begin{remark}
	Let $B$ be a subset of $X$ and $\tau\in \mathbb{J}$. The forward $\omega$-limit of
	$B$ at $\tau$ is 
	\begin{eqnarray*}
	\omega_F(B,\tau)=\bigg\{y\in X: \hbox{ there are sequences } \{b_n\}_{n\in \mathbb{N}}\subset B\hbox{ and } t_n\geq 0 \hbox{ with } \\t_n\to +\infty 
	 \hbox{ such that }y=\lim_{n\to +\infty}S(t_n+\tau,\tau)b_n\bigg\}.
	\end{eqnarray*}
\end{remark}
\par Now, we present the definition of \textit{forward attractor} for an evolution process $\mathcal{S}$.
\begin{definition}
	A compact set $\mathcal{A}$ on $X$ is called a \textbf{forward attractor} for $\mathcal{S}$ 
	if $\mathcal{A}$ forward attracts any bounded set in $X$, i.e., for any 
	bounded set $B\subset X$ and $\tau\in \mathbb{J}$
	\begin{equation*}
	\lim_{t\to +\infty} dist(S(t,\tau)B, \mathcal{A})=0,
	\end{equation*}  
	and $\mathcal{A}$ is the smallest closed set that forward attracts bounded sets.	
\end{definition}
\par In Chepyzhov and Vishik \cite{Chepyzhov-Vishik} it is discussed the existence of forward attractors which can be \textit{uniform} with respect to the initial time $\tau$ (\textit{w.r.t. $\tau$}). However, in our applications it is not expected to obtain this type of uniform attractors, because of the presence of nonuniform hyperbolicity that we will use in our hypotheses in the following sections.
\par Now, we are ready to present our result on the existence of forward attractors.
\begin{theorem}\label{th-existence-forward-attractor}
	Let $\mathcal{S}=\{S(t,s): t\geq s, \ t,s\in \mathbb{J}\}$ be an evolution process in $X$. 
	Suppose that there exists 
	a compact set that forward absorbs every bounded set of $X$, then there exists 
	a forward attractor for
	$\mathcal{S}$.
\end{theorem}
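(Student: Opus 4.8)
The plan is to build the forward attractor as a union of forward $\omega$-limit sets and to show this union is compact and forward attracting. Let $\mathcal{K}$ be the compact set that forward absorbs every bounded subset of $X$. The natural candidate for the forward attractor is
\begin{equation*}
\mathcal{A} := \overline{\bigcup_{\tau \in \mathbb{J}} \omega_F(\mathcal{K}, \tau)}.
\end{equation*}
First I would establish the basic properties of $\omega_F(B,\tau)$: using the remark's sequential characterization, if $\mathcal{K}$ forward absorbs $B$, then every sequence $S(t_n+\tau,\tau)b_n$ with $b_n \in B$ and $t_n \to +\infty$ eventually lies in $\mathcal{K}$ (for $n$ large), so by compactness of $\mathcal{K}$ it has a convergent subsequence with limit in $\mathcal{K}$; hence $\omega_F(B,\tau)$ is nonempty and $\omega_F(B,\tau) \subset \mathcal{K}$ for every bounded $B$ and every $\tau$. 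In particular $\mathcal{A} \subset \mathcal{K}$, so $\mathcal{A}$ is closed and bounded inside a compact set, hence compact.

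Next I would prove forward attraction: for a bounded set $B \subset X$ and $\tau \in \mathbb{J}$, I claim $\operatorname{dist}(S(t,\tau)B, \omega_F(B,\tau)) \to 0$ as $t \to +\infty$. This is the standard $\omega$-limit argument run in forward time: if it failed, there would be $\varepsilon>0$, $b_n \in B$ and $t_n \to +\infty$ with $\operatorname{dist}(S(t_n+\tau,\tau)b_n, \omega_F(B,\tau)) \geq \varepsilon$; but forward absorption puts $S(t_n+\tau,\tau)b_n$ in $\mathcal{K}$ for large $n$, so a subsequence converges to some $y$, and by the sequential characterization $y \in \omega_F(B,\tau)$, contradicting the distance bound. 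Since $\omega_F(B,\tau) \subset \mathcal{A}$, this gives $\operatorname{dist}(S(t,\tau)B, \mathcal{A}) \to 0$, i.e.\ $\mathcal{A}$ forward attracts every bounded set.

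Finally I would address minimality: if $\mathcal{C}$ is any closed set that forward attracts every bounded set, then in particular it forward attracts $\mathcal{K}$, and I would show each $\omega_F(\mathcal{K},\tau) \subset \mathcal{C}$. Indeed, for $y \in \omega_F(\mathcal{K},\tau)$ write $y = \lim_n S(t_n+\tau,\tau)k_n$ with $k_n \in \mathcal{K}$, $t_n \to +\infty$; since $\operatorname{dist}(S(t_n+\tau,\tau)\mathcal{K},\mathcal{C}) \to 0$ we get $\operatorname{dist}(S(t_n+\tau,\tau)k_n,\mathcal{C}) \to 0$, and as $\mathcal{C}$ is closed, $y \in \mathcal{C}$. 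Taking the closed union over $\tau$ yields $\mathcal{A} \subset \mathcal{C}$. The main obstacle I anticipate is being careful with the fact that forward absorption of $B$ by $\mathcal{K}$ only guarantees $S(t,\tau)B \subset \mathcal{K}$ for $t \geq t_0(\tau,B)$ — not for all $t$ — so every compactness extraction must be run along tails $t_n \geq t_0$; this is routine but must be stated precisely. I would also note that, unlike pullback attractors, no invariance is claimed here, which simplifies the argument considerably.
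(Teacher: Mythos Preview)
Your approach is essentially the paper's: build the attractor as the closure of a union of forward $\omega$-limit sets inside the compact absorbing set, then check attraction and minimality. The only difference is that the paper takes the union over \emph{all} bounded $B$ and all $\tau$, namely $\mathcal{A}=\overline{\bigcup_{B\in\mathcal{B}}\bigcup_{\tau}\omega_F(B,\tau)}$, which makes the inclusion $\omega_F(B,\tau)\subset\mathcal{A}$ trivially true by construction. You instead set $\mathcal{A}=\overline{\bigcup_{\tau}\omega_F(\mathcal{K},\tau)}$, which is more economical but then your line ``since $\omega_F(B,\tau)\subset\mathcal{A}$'' is an unjustified assertion. It is true, but it needs one sentence: by absorption pick $t_0=t_0(\tau,B)$ with $S(t_0,\tau)B\subset\mathcal{K}$; then any $y=\lim_n S(t_n+\tau,\tau)b_n\in\omega_F(B,\tau)$ satisfies, for large $n$, $S(t_n+\tau,\tau)b_n=S(t_n+\tau,t_0)k_n$ with $k_n:=S(t_0,\tau)b_n\in\mathcal{K}$, hence $y\in\omega_F(\mathcal{K},t_0)\subset\mathcal{A}$. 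With that line added your argument is complete and equivalent to the paper's.
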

\begin{proof}
	\par Let $\mathcal{B}$ be the class of all bounded nonempty set of $X$ and suppose that there exists a compact set $K$ that absorbs every element of $\mathcal{B}$.
	Define  
	\begin{equation*}
	\mathcal{A}:=\overline{\bigcup_{B\in \mathcal{B}}\bigcup_{\tau\in \mathbb{J}} \omega_F(B,\tau)}.
	\end{equation*}
	\par Note that, for every $B\in \mathcal{B}$ and $\tau \in \mathbb{J}$ we have that
	$\omega_F(B,\tau)$ is the smallest closed set such that
	\begin{equation*}
	\lim_{t\to +\infty} dist(S(t+\tau,\tau)B, \omega_F(B,\tau))=0.
	\end{equation*}
	Then, for every bounded set $B$ and $\tau\in \mathbb{J}$ we have that $\omega_F(B,\tau)\subset K$. Thus, $\mathcal{A}$ a closed set contained in $K$.
	Hence $\mathcal{A}$ is a compact set which forward attracts $B$. 
	\par Moreover, let $D$ be a closed subset of $X$ that forward attract subset $B$ of $X$. Then, for each $B$ and any $\tau \in \mathbb{J}$ we have that $\omega_F(B,\tau)\subset D$. Thus $\mathcal{A}\subset D$, which means that, $\mathcal{A}$ is the minimal closed set that attracts every element of $\mathcal{B}$. Therefore
	 $\mathcal{A}$ is the forward attractor for $\mathcal{S}$.
\end{proof}
\begin{remark}
	Let $\mathcal{S}=\{S(t,s):t\geq s, \ t,s\in \mathbb{R}\}$ be an evolution process such that 
	$S(0,s)B$ is a bounded for each bounded set $B$ of $X$ and $s<0$. Then the existence of a forward attractor for $\mathcal{S}$ is completely determined by $\mathcal{S}$ restricted in $\mathbb{R}^+$. In fact, at this situation, if $\mathcal{S}_+=\{S(t,s):t\geq s\geq 0\}$ has a forward attractor $\mathcal{A}$, then $\mathcal{A}$ is the forward attractor for $\mathcal{S}$. 
\end{remark}
\subsection{Pullback attraction}
\label{subsec-existence-pullback-attractors}
\par In this subsection we study \textit{pullback asymptotic behavior}. Our goal is to provide conditions for the existence of a pullback attractor that attracts family of nonempty subsets in a \textit{universe}.
For this, we set $\mathbb{J}$ as $\mathbb{R}$ or 
$\mathbb{R}^-$. 
\par We consider the collection 
$\mathcal{M}$ consisting of all time dependent families of non-empty 
subsets of $X$, i.e., a typical element of the collection $\mathcal{M}$
is a family
\begin{equation*}
\widehat{D}:=\{D(t): D(t)\subset X, \, D(t)\neq\emptyset\, 
\hbox{ for all }t\in \mathbb{J}\}.
\end{equation*}
For $\widehat{D},\widehat{B}\in \mathcal{M}$, we write 
$\widehat{D}\subset \widehat{B}$ to mean that
$D(t)\subset B(t)$ for all $t\in \mathbb{J}$.
\begin{definition}
	A subcollection $\mathcal{D}$ of $\mathcal{M}$ 
	is said to be an \textbf{universe} if it is closed by inclusions, i.e.,
	if whenever 
	$\widehat{D}\in \mathcal{D}$ and $\widehat{B}\in \mathcal{M}$ with
	$\widehat{B}\subset \widehat{D}$, then $\widehat{B}$ also belongs to 
	$\mathcal{D}$.
\end{definition}

\par An usual example of an universe for a metric space $X$ is the collection of bounded subsets of $X$. Now, we present a nontrivial example of an universe on a Banach space $(X,\|\cdot\|_X)$.

\begin{example}\label{def-example-universe}
	For all $\gamma \geq 0$ we denote 
	$\mathcal{D}_\gamma$ a subclass of $\mathcal{M}$
	that consists of all nonempty family
	$\widehat{D}_\gamma$ for which there exists 
	$C=C(\widehat{D}_\gamma)>0$ such that 
	\begin{equation*}
	\sup_{t\in \mathbb{J}}\sup_{x_t\in D_\gamma(t)}\{e^{-\gamma|t|}\|x_t\|_X\} \leq C.
	\end{equation*}
	Note that $\mathcal{D}_0$ is the collection of the uniformly bounded families of nonempty subsets of $X$.
	\par The subclass $\mathcal{D}_\gamma$ defines an universe for each fixed $\gamma>0$ .
	In fact, 
	let $\widehat{B},\widehat{D}\in \mathcal{M}$ such that
	$\widehat{B}\subset \widehat{D}$ and $\widehat{D}\in \mathcal{D}_\gamma.$
	Since
	\begin{equation*}
	\sup_{x\in B(t)}\big\{e^{-\gamma|t|} \,\|x\|_X\big\}
	\leq 
	\sup_{x\in D(t)}\{e^{-\gamma|t|} \, \|x\|_X\}, \hbox{ for all } t\in \mathbb{J},
	\end{equation*}
	we obtain that $\widehat{B}\in \mathcal{D}_\gamma$.
\end{example}

\begin{definition}
	Let $\mathcal{S}=\{S(t,s): t\geq s\}$ be an evolution process, $\mathcal{D}$ is a universe in $X$, and $\widehat{B},\widehat{D}\in \mathcal{M}$. 
	\begin{itemize}
	\item We say that $\widehat{B}$ \textbf{pullback attracts} $\widehat{D}\in \mathcal{M}$ \textbf{at instant $t$} if
	\begin{equation*}
	\lim_{s\to-\infty}dist(S(t,s)D(s),B(t))=0.
	\end{equation*}
	\item We say that $\widehat{B}$ \textbf{pullback $\mathcal{D}$-attracts} if, 
for every
	$\widehat{D}\in \mathcal{D}$ and $t\in \mathbb{J}$, 
	$\widehat{B}$ pullback attracts $\widehat{D}\in \mathcal{D}$ at instant $t$.
\end{itemize}
\end{definition}

\begin{definition}
	Let $\mathcal{D}$ be an universe and $\mathcal{S}=\{S(t,s): t\geq s\}$ an evolution process. 
	A family of compact sets 
	$\widehat{\mathcal{A}}_{\mathcal{D}}=\{\mathcal{A}_{\mathcal{D}}(t): \ t\in \mathbb{J}\}$ is said to be a
	\textbf{pullback $\mathcal{D}$-attractor} for the
	evolution process $\mathcal{S}$ if
	\begin{enumerate}
		\item $\widehat{\mathcal{A}}_\mathcal{D}$ pullback $\mathcal{D}$-attracts;
		\item $\widehat{\mathcal{A}}_\mathcal{D}$ is \textbf{invariant}, i.e.,
		\begin{equation*}
		S(t,s)\mathcal{A}_\mathcal{D}(s)=
		\mathcal{A}_\mathcal{D}(t)S(t,s), \hbox{ for all } t\geq s;
		\end{equation*}
		\item $\widehat{\mathcal{A}}_\mathcal{D}$ is the minimal closed family that 
		pullback $\mathcal{D}$-attracts, i.e., if 
		$\widehat{F}=\{F(t): t\in \mathbb{J}\}$ is a family of closed sets that pullback $\mathcal{D}$-attracts, then $\widehat{\mathcal{A}}\subset \widehat{F}$.
	\end{enumerate}
\end{definition}
\par In some situations the pullback attractor for an evolution process can  be \textit{uniformly bounded}.
\begin{definition}
	Let $\mathcal{D}$ be an universe and $\mathcal{S}=\{S(t,s): t\geq s\}$ an evolution process, and $\widehat{\mathcal{A}}_\mathcal{D}=\{\mathcal{A}_\mathcal{D}(t):t\in \mathbb{J}\}$ a pullback $\mathcal{D}$-attractor for $\mathcal{S}$. We say that $\widehat{\mathcal{A}}_\mathcal{D}$ is \textbf{uniformly bounded} if $\cup_{t\in \mathbb{J}}\mathcal{A}_\mathcal{D}(t)$ is a bounded
	subset of $X$.
\end{definition}

\par Similar to the forward case, we also have the notion of \textit{pullback absorption} for family of subsets of $X$.

\begin{definition}
	Let $\widehat{K}=\{K(t):t\in\mathbb{J} \}$ be a family of nonempty subsets of $X$. We say
	that $\widehat{K}$ \textbf{pullback $\mathcal{D}$-absorbs} 
	if given $\widehat{D}\in \mathcal{D}$ and $t\in \mathbb{J}$ 
	the family $\widehat{K}$ pullback absorbs $\widehat{D}$ 
	in the instant $t\in \mathbb{J}$, i.e, 
	there is
	$s_0=s_0(t,\widehat{D})\leq t$ such that
	\begin{equation*}
	S(t,s)D(s)\subset K(t), \hbox{ for all } s\leq s_0.
	\end{equation*}
\end{definition}

\par To prove the existence of pullback attractor it is crucial to consider the \textit{pullback $\omega$-limit set}.

\begin{definition}
	Let $\widehat{D}\in \mathcal{M}$ and $\tau\in \mathbb{J}$. The \textbf{pullback $\omega$-limit set of $\widehat{D}$ at $\tau$} is
	\begin{equation*}
	\omega_P(\widehat{D},\tau)=\bigcap_{t\leq \tau} \overline{\bigcup_{s\leq t} S(\tau,s) D(s)}.
	\end{equation*}
\end{definition}
The pullback $\omega$-limit set can be characterized by sequences.
\begin{remark}
	Let $B$ be a subset of $X$ and $\tau\in \mathbb{J}$. The forward $\omega$-limit of
	$B$ at $\tau$ is 
	\begin{eqnarray*}
	\omega_P(\widehat{D},\tau)=\bigg\{y\in X: \hbox{ there exist sequences } s_n\leq \tau, \ s_n\to -\infty, \\
	\hbox{and } b_n\in D(s_n)   \hbox{ such that }y=\lim_{n\to +\infty}S(\tau, s_n)b_n\bigg\}.
	\end{eqnarray*}
\end{remark}

\par Finally, We state our result on the existence of pullback $\mathcal{D}$-attractors. 
\begin{theorem}\label{th-characterization-existence-pullback-attractors}
	Let $\mathcal{S}$ be an evolution process in a metric space $X$ over $\mathbb{J}$ and $\mathcal{D}$ be a universe. Suppose that there exists a family of compacts sets
	$\widehat{K}=\{K(t)\, :t\in \mathbb{J}\}$ that pullback $\mathcal{D}$-absorbs.
	Then there exists a pullback $\mathcal{D}$-attractor
	$\widehat{\mathcal{A}}_\mathcal{D}$ for the evolution process
	$\mathcal{S}$.
	Moreover, the pullback $\mathcal{D}$-attractor is $\widehat{\mathcal{A}}=\{\mathcal{A}(t): t\in\mathbb{J} \}$, where
	\begin{equation}\label{eq-definition-pullback-D-attractor}
	\mathcal{A}(t):=\overline{\bigcup_{\widehat{D}\in \mathcal{D}} \omega_P(\widehat{D},t)}, \hbox{ for each } t\in \mathbb{J}.
	\end{equation}
\end{theorem}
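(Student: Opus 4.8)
The plan is to run the classical argument for pullback attractors, using the pullback $\omega$-limit sets $\omega_P(\widehat{D},\tau)$ as building blocks and the compactness of the absorbing family $\widehat{K}$ as the only source of compactness.

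First I would fix $\widehat{D}\in\mathcal{D}$ and $\tau\in\mathbb{J}$ and record the basic properties of $\omega_P(\widehat{D},\tau)$. By pullback $\mathcal{D}$-absorption there is $s_0\le\tau$ with $S(\tau,s)D(s)\subset K(\tau)$ for all $s\le s_0$; since, by the sequential characterization, every point of $\omega_P(\widehat{D},\tau)$ is a limit of points $S(\tau,s_n)b_n$ with $b_n\in D(s_n)$ and $s_n\to-\infty$, it follows that $\omega_P(\widehat{D},\tau)\subset\overline{K(\tau)}=K(\tau)$; in particular $\omega_P(\widehat{D},\tau)$ is a closed subset of a compact set, and it is nonempty because the points $S(\tau,s_n)b_n$ eventually lie in the compact set $K(\tau)$ and hence admit a convergent subsequence whose limit, by definition, belongs to $\omega_P(\widehat{D},\tau)$. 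The same compactness argument shows that $\omega_P(\widehat{D},\tau)$ pullback attracts $\widehat{D}$ at $\tau$: otherwise one produces $\epsilon>0$, $s_n\to-\infty$ and $b_n\in D(s_n)$ with $dist(S(\tau,s_n)b_n,\omega_P(\widehat{D},\tau))\ge\epsilon$, extracts a subsequence of $S(\tau,s_n)b_n$ converging inside $K(\tau)$, and notes that its limit lies in $\omega_P(\widehat{D},\tau)$, a contradiction. Finally, if $F$ is any closed set that attracts $\widehat{D}$ at $\tau$, the same sequential argument gives $\omega_P(\widehat{D},\tau)\subset F$, so $\omega_P(\widehat{D},\tau)$ is the minimal closed set attracting $\widehat{D}$ at $\tau$.

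Next I would define $\mathcal{A}(t)$ by \eqref{eq-definition-pullback-D-attractor} and verify the three requirements. Compactness: since each $\omega_P(\widehat{D},t)\subset K(t)$, also $\bigcup_{\widehat{D}\in\mathcal{D}}\omega_P(\widehat{D},t)\subset K(t)$, so $\mathcal{A}(t)$ is a closed subset of the compact set $K(t)$, and it is nonempty provided $\mathcal{D}$ itself is nonempty. Pullback $\mathcal{D}$-attraction: immediate from $\omega_P(\widehat{D},t)\subset\mathcal{A}(t)$ together with the attraction property just proved. Minimality: if $\widehat{F}=\{F(t)\}$ is a family of closed sets that pullback $\mathcal{D}$-attracts, then $\omega_P(\widehat{D},t)\subset F(t)$ for every $\widehat{D}$ and $t$, whence $\mathcal{A}(t)\subset F(t)$; and since $\widehat{\mathcal{A}}$ is itself a family of closed sets that pullback $\mathcal{D}$-attracts, it is the minimal one.

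The main work is invariance. I would first prove, for fixed $\widehat{D}\in\mathcal{D}$ and $t\ge s$, that $S(t,s)\,\omega_P(\widehat{D},s)=\omega_P(\widehat{D},t)$. The inclusion ``$\subset$'' only uses continuity of $S(t,s)$ and the identity $S(t,s_n)=S(t,s)S(s,s_n)$ valid for $s_n\le s$; the inclusion ``$\supset$'' is where the compactness of $\widehat{K}$ is essential: writing $y=\lim_n S(t,s_n)b_n\in\omega_P(\widehat{D},t)$ and $z_n=S(s,s_n)b_n$, absorption places $z_n\in K(s)$ for $n$ large, so a subsequence of $z_n$ converges to some $z\in\omega_P(\widehat{D},s)$, and continuity of $S(t,s)$ gives $S(t,s)z=y$. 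Taking the union over $\widehat{D}$, and using that $S(t,s)$ is continuous and maps the compact set $\mathcal{A}(s)$ onto a compact, hence closed, set, one obtains
\begin{equation*}
\mathcal{A}(t)=\overline{\textstyle\bigcup_{\widehat{D}}\omega_P(\widehat{D},t)}\subset S(t,s)\mathcal{A}(s)\subset\overline{S(t,s)\textstyle\bigcup_{\widehat{D}}\omega_P(\widehat{D},s)}=\mathcal{A}(t),
\end{equation*}
so $S(t,s)\mathcal{A}(s)=\mathcal{A}(t)$ for all $t\ge s$. I expect this ``$\supset$'' step — the one place where the compactness of the absorbing family is genuinely used — to be the only real obstacle; everything else is soft point-set topology. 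I would finish by observing that $\widehat{\mathcal{A}}_\mathcal{D}$ need not belong to $\mathcal{D}$, which is exactly why its characterization is stated through minimality among closed pullback $\mathcal{D}$-attracting families rather than within $\mathcal{D}$.
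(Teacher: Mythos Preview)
Your proposal is correct and follows essentially the same classical argument as the paper: establish the basic properties of the pullback $\omega$-limits using the compact absorbing family, define $\mathcal{A}(t)$ as the closure of their union, and prove invariance by first showing $S(t,s)\,\omega_P(\widehat{D},s)=\omega_P(\widehat{D},t)$. The only minor difference is in the final step: the paper shows $\mathcal{A}(t)\subset S(t,s)\mathcal{A}(s)$ by a sequential extraction inside the compact $\mathcal{A}(s)$, while you obtain it more directly from the closedness of the compact image $S(t,s)\mathcal{A}(s)$ --- a cosmetic variation, not a different route.
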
 
\begin{proof} 
\par The proof of Theorem \ref{th-characterization-existence-pullback-attractors} follows the same line of arguments of  
	\cite[Theorem 2.12]{Carvalho-Langa-Robison-book}, some details are included for the reader convenience.
	\par Let $\widehat{K}=\{K(t)\, :t\in \mathbb{J}\}$ be the family of compact sets that pullback attracts every family of sets contained in $\mathcal{D}$. 
	For each $t\in \mathbb{J}$ define $\mathcal{A}(t)$ as in \eqref{eq-definition-pullback-D-attractor}.
	\par For each $\widehat{D}\in \mathcal{D}$ and $t\in \mathbb{J}$ we have that
	$\omega_P(\widehat{D},t)$ is the minimal closed set that pullback attracts 
	$\widehat{D}$ at instant $t$. Thus, for each $t\in \mathbb{J}$, the pullback $\omega$-limit set $\omega_P(\widehat{D},t)$ is contained in $K(t)$. Therefore, the set $\mathcal{A}(t)$ is compact, pullback attracts every $\widehat{D}\in \mathcal{D}$ at instant $t$, and it is the minimal closed family that pullback $\mathcal{D}$-attracts. Hence, to conclude that $\widehat{\mathcal{A}}$ is the pullback $\mathcal{D}$-attractor we only need to guarantee invariance. 
	\par First, let us prove invariance of pullback $\omega$-limit sets. Let $t\geq s$ with $t,s\in \mathbb{J}$, then
	\begin{equation}\label{eq-invarciance-pullback-omega-limit}
	S(t,s)\omega_P(\widehat{D},s)=\omega_P(\widehat{D},t).
	\end{equation}
	In fact, from continuity of $S(t,s)$ we obtain that $S(t,s)\omega_P(\widehat{D},s)\subset \omega_P(\widehat{D},t)$, $t\geq s$. 
	Conversely, let $x\in \omega_P(\widehat{D},t)$, then there exist $s_n\to -\infty$ as $n\to +\infty$, with $s_n\leq t$ and $x_n\in D(s_n)$ such that $x=\lim_{n\to +\infty}S(t,s_n)x_n$. 
	Since $K(s)$ pullback absorbs $D$ at $s$ and $s_n\to -\infty$, it is possible to choose $n_0>0$ such that $s_n\leq s$ and $S(s,s_n)x_n\in K(s)$ for every $n\geq n_0$. Hence 
	$y_n:=S(s,s_n)x_n$ has a convergent subsequence $\{y_{n_{j}}\}$ with limit $y\in K(s)$, in particular $\lim_j y_{n_j}=y\in \omega_P(\widehat{D},s)$. Thus, $x=S(t,s)y\in S(t,s)\omega_P(\widehat{D},s)$ which concludes the proof of \eqref{eq-invarciance-pullback-omega-limit}.
	\par Finally, we are ready to prove have that $S(t,s)\mathcal{A}(s)=\mathcal{A}(t)$.
	By continuity of operator $S(t,s)$ and invariance of the pullback $\omega$-limit sets \eqref{eq-invarciance-pullback-omega-limit}, we obtain that $S(t,s)\mathcal{A}(s)\subset\mathcal{A}(t)$. 
	Reciprocally, let $x\in \mathcal{A}(t)$, then exist $x_n\in \omega_P(\widehat{D}_n,t)$
	such that $x=\lim_{n\to +\infty}x_n$. Thanks to invariance of pullback $\omega$-limit sets \eqref{eq-invarciance-pullback-omega-limit}
	there exists $y_n\in \omega_P(\widehat{D}_n,s)$ such that $x_n=S(t,s)y_n$. 
	Since $\{y_n\}_{n\in \mathbb{N}}$ is contained in $\mathcal{A}(s)$ it is possible to extract a convergent subsequence with limit $y\in \mathcal{A}(s)$. Thus $x=S(t,s)y$ and the proof is complete.
	\end{proof}
\begin{remark}\label{remark-pullback-attractors}
	Note that Theorem \ref{th-characterization-existence-pullback-attractors} provides existence and \textbf{uniqueness} of pullback $\mathcal{D}$-attractors \textbf{without} the condition that 
	$\widehat{K}$ belongs to $\mathcal{D}$, we also \textbf{do not} have obtained (necessarily) that 
	$\widehat{\mathcal{A}}$ belongs to $\mathcal{D}$. 
	This differs from the typical results on the existence and uniqueness of pullback $\mathcal{D}$-attractors, see for instance \cite[Theorem 18]{Marin-Real}, \cite[Theorem 7]{Caraballo-Lukaszewicz-Real} or \cite[Theorem 2.50]{Carvalho-Langa-Robison-book}. 
	Usually, they suppose that 
	the evolution process $\mathcal{S}$ is 
	\textit{pullback $\mathcal{D}$-asymptotically compact} and that there exists $\widehat{B}\in \mathcal{D}$ such that pullback $\mathcal{D}$-attracts, 
	they conclude the existence of one and only one pullback $\mathcal{D}$-attractor $\widehat{A}$ that belongs to the universe $\mathcal{D}$. 
	\par In our applications, the pullback $\mathcal{D}$-attractor does not necessarily belongs to $\mathcal{D}$, see Section \ref{sec-app-odes}, for ODEs and Section \ref{sec-application-pdes}, for PDEs. 
\end{remark}
	 

\begin{remark}
	Let $\mathcal{S}=\{S(t,s): t\geq s, \ s\in \mathbb{R}\}$ is an evolution process such that $S(t,s)$ is compact for every $t>s$, then 
	the existence of a pullback $\mathcal{D}$-attractor for $\mathcal{S}$ is completely determined by the existence of a pullback $\mathcal{D}$-attractor for 
	$\mathcal{S}_-=\{S(t,s): s\leq t\leq 0\}$. 
\end{remark}

\section{Applications in nonautonomous ODEs}
\label{sec-app-odes}
\par In this section we consider a nonautonomous differential equation
\begin{equation}\label{eq-nonautonomous-problem}
\dot{x}=f(t,x), \ \ x(s)=x_s, 
\end{equation}
where $f:\mathbb{R}^{N+1} \to \mathbb{R}^{N}$ is a continuous function, locally Lipschitz in the second variable.
Then, for each $x_0\in \mathbb{R}^N$ and 
$s\in \mathbb{R}$, there exists a
solution 
$x(\cdot,s;f,x_0): [s,\sigma(s,x_0))\to \mathbb{R}^N$
of 
\eqref{eq-nonautonomous-problem}, defined on a maximal interval of existence
 $[s,\sigma(s,x_0))$ for some $\sigma(s,x_0)>s$ (possibly $+\infty$). 
If $\sigma(s,x_0)=+\infty$, then for each $x_0\in \mathbb{R}^N$ and $s\in \mathbb{R}$, this solution induces a continuous evolution process
$\{S_f(t,s)\, :\, t\geq s\}$ over $\mathbb{R}^N$ defined by
$S_f(t,s)x_0:=x(t,s;f,x_0)$.

\subsection{Comparison with a scalar ODE}
\label{subssec-comparison-scalar-odes}
\par In this subsection we develop an admissibility theory applied to asymptotic behavior of evolution processes. 
 This is done by comparison of \eqref{eq-nonautonomous-problem} with an scalar differential equation. 
 We use the ideas of admissibility for the scalar ODEs to obtain existence of pullback and forward attractors. Thanks to this technique we obtain estimates for the size of the balls that contains the attractors.
 The approach provided in this subsection can be employed in different situations. For instance, in Subsection \ref{subssec-comparison-systems-odes} for comparison with systems of ordinary differential equations, and 
  Subsection \ref{subssec-comparison-scalar-pdes} for parabolic differential equations. 
  \par The approach of this subsection is inspired by Longo \textit{et al.} \cite{Longo-Novo-Obaya}, where the authors use uniform exponential dichotomies and comparison to study asymptotic behavior for nonautonomous dynamical systems, and by Zhou and Zhang \cite{Zhou-Zhang}, where they study admissibility pairs for difference equations.
\par Consider the following assumption: 
\par \textbf{(A)} Assume that there exists $a,b:\mathbb{R}\rightarrow \mathbb{R}$ continuous real functions such that
\begin{equation*}
2\langle f(t,x), x\rangle \leq a(t)|x|^2+b(t), \hbox{ for all }
(t,x)\in \mathbb{R}^{N+1},
\end{equation*}	
where $|x|:=\sqrt{\langle x, x \rangle}$, and
$\langle \cdot, \cdot \rangle$ is the inner product on
$\mathbb{R}^N$.
\par From \textbf{(A)}, we have that 
\begin{equation*}
\frac{d}{dt}|x(t)|^2=
2\langle f(t,x(t)), x(t)\rangle
\leq a(t)|x(t)|^2+b(t), 
\hbox{ for all }
t\in \mathbb{R}.
\end{equation*}
Then, for each  $(s,x_0)\in \mathbb{R}\times \mathbb{R}^N$ the solution 
$x(t,s;f,x_0)$ is defined for every $t\geq s$, and satisfies
\begin{equation}\label{eq-nonhomogenous-perturbation-of_scalar-integral-eq}
|x(t,s;f,x_0)|^2\leq \exp\bigg\{\int_{s}^{t} a(r) dr  \bigg\}|x_0|^2 + \int_s^t 
\exp\bigg\{\int_{r}^{t} a(\tau) d\tau\bigg\} b(r) dr.
\end{equation}
\par This leads us consider the scalar nonautonomous equation
\begin{equation}\label{eq-nonhomogenous-perturbation-of_scalar-eq}
\dot{z}=a(t)z+b(t)\in \mathbb{R}.
\end{equation}
\par Let 
$T(t,s):=\exp\big\{\int_{s}^{t} a(r) dr \big\}$ for all $t,s\in \mathbb{J}$, then
$\mathcal{T}=\{T(t,s):t\geq s\}$ is the evolution process such that 
$z(t,s;a,x_0):=T(t,s)z_0$ is the solution of 
$\dot{z}=a(t)z$ with $z(0)=z_0$.
\par For each $\eta\in \mathbb{R}$ consider the space
\begin{equation*}
C_{\eta}(\mathbb{J})=\{\,
b:\mathbb{J}\rightarrow \mathbb{R}: \hbox{ b is continuous and }
\sup_{r\in \mathbb{J}} \{e^{-\eta|r|}|b(r)|\}	<+\infty\,
\}.
\end{equation*}

\par We study existence of pullback 
$\mathcal{D}_\gamma$-attractors for $\mathcal{S}_f$, by assuming that $\mathcal{T}$ admits a NED (of types I or II). 
In fact, inspired by possible admissible pairs
for \eqref{eq-nonhomogenous-perturbation-of_scalar-eq}, we will provide explicitly  radius of family of balls that contains 
the pullback attractor depending of the space that the non-homogeneous function $b$ belongs.

%
\par First, we study the existence of pullback attractors for $\mathcal{S}_f$ restricted in $\mathbb{R}^-$.
\begin{theorem}[Existence of Pullback $\mathcal{D}_\gamma$-Attractors in $\mathbb{R}^-$]
	\label{th-existence-pullback-attractors-semi-line}
	Suppose that $\mathcal{T}$ admits a NEDII on $\mathbb{R}^-$ with 
	$X_{II}^s(\alpha,\delta)$ and $\Pi^u=0$. Let $\mathcal{D}_\gamma$ be the universe defined in Example
	\ref{def-example-universe} with $\mathbb{J}=\mathbb{R}^-$,
	for every $0\leq \gamma<\alpha/2$.
	\par If $\lambda <\alpha/\delta$ and  
	$b\in C_{\lambda \delta}(\mathbb{R}^-)$,
	there exists a pullback $\mathcal{D}_\gamma$-attractor 
	$\widehat{\mathcal{A}}_\gamma:=\{A_\gamma(t): t\leq 0\}$ for $\mathcal{S}_f^-:=\{S_f(t,s): s\leq t\leq 0\}$.
	\par Moreover,
	$\mathcal{A}_\gamma(t)\subset B[0,R(t)]$, 
	for each $t\in \mathbb{R}^-$, 
	for some $R:\mathbb{R}^-\to \mathbb{R}$ which 
	$R^2\in C_{(1+\lambda)\delta}(\mathbb{R}^-)$.
	\par In particular, if $\lambda \leq -1$, the pullback $\mathcal{D}_\gamma$-attractor $\widehat{\mathcal{A}}_\gamma$ is uniformly bounded.
\end{theorem}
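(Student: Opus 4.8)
The plan is to reduce the statement to the abstract criterion of Theorem~\ref{th-characterization-existence-pullback-attractors}: it is enough to exhibit a family of compact sets that pullback $\mathcal{D}_\gamma$-absorbs, and since the phase space is $\mathbb{R}^N$ these may be taken to be closed balls, whose radii will then also bound the attractor. The starting point is the comparison estimate \eqref{eq-nonhomogenous-perturbation-of_scalar-integral-eq} coming from assumption \textbf{(A)}, which I would rewrite as
\begin{equation*}
|x(t,s;f,x_0)|^2\leq T(t,s)|x_0|^2+\int_s^t T(t,r)\,b(r)\,dr,\qquad s\leq t\leq 0.
\end{equation*}
Since $\Pi^u=0$, so that $\Pi^s$ is the identity, the condition $X^s_{II}(\alpha,\delta)$ means $T(t,s)=\|T(t,s)\|_{\mathcal{L}(\mathbb{R})}\leq Me^{\delta|t|}e^{-\alpha(t-s)}$ for $t\geq s$; on $\mathbb{R}^-$ this becomes $T(t,r)\leq Me^{-\delta t}e^{-\alpha(t-r)}$ because $|t|=-t$.

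Next I would control the two terms. For $\widehat{D}_\gamma\in\mathcal{D}_\gamma$ one has $|x_0|\leq Ce^{\gamma|s|}=Ce^{-\gamma s}$ on $D_\gamma(s)$, so the first term is at most $MC^2e^{-(\alpha+\delta)t}e^{(\alpha-2\gamma)s}$, which tends to $0$ as $s\to-\infty$ exactly because $\gamma<\alpha/2$. Since $b\in C_{\lambda\delta}(\mathbb{R}^-)$ there is $C_b\geq 0$ with $|b(r)|\leq C_b e^{\lambda\delta|r|}=C_b e^{-\lambda\delta r}$, and an elementary computation of $\int_s^t e^{(\alpha-\lambda\delta)r}\,dr$, which converges as $s\to-\infty$ precisely because $\lambda<\alpha/\delta$, gives
\begin{equation*}
\int_s^t T(t,r)|b(r)|\,dr\leq\frac{MC_b}{\alpha-\lambda\delta}\,e^{(1+\lambda)\delta|t|}.
\end{equation*}
Combining these, for each $t\leq 0$ and each $\widehat{D}_\gamma$ there is $s_0=s_0(t,\widehat{D}_\gamma)$ with $|x(t,s;f,x_0)|^2\leq 1+\frac{MC_b}{\alpha-\lambda\delta}e^{(1+\lambda)\delta|t|}$ for all $s\leq s_0$ and $x_0\in D_\gamma(s)$; taking $K(t)$ the closed ball of that radius, $\widehat{K}=\{K(t):t\leq 0\}$ pullback $\mathcal{D}_\gamma$-absorbs, and Theorem~\ref{th-characterization-existence-pullback-attractors} produces $\widehat{\mathcal{A}}_\gamma$ with $\mathcal{A}_\gamma(t)=\overline{\bigcup_{\widehat{D}\in\mathcal{D}_\gamma}\omega_P(\widehat{D},t)}$.

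To obtain the sharp radius I would pass to the limit inside the estimate: every $y\in\omega_P(\widehat{D},t)$ is a limit of points $S_f(t,s_n)x_n$ with $s_n\to-\infty$, and along such sequences the initial-data term vanishes while the forcing term stays below $\frac{MC_b}{\alpha-\lambda\delta}e^{(1+\lambda)\delta|t|}$, so $|y|^2\leq\frac{MC_b}{\alpha-\lambda\delta}e^{(1+\lambda)\delta|t|}$. Hence $\mathcal{A}_\gamma(t)\subset B[0,R(t)]$ with $R(t)^2=\frac{MC_b}{\alpha-\lambda\delta}e^{(1+\lambda)\delta|t|}$, so $e^{-(1+\lambda)\delta|t|}R(t)^2$ is constant and $R^2\in C_{(1+\lambda)\delta}(\mathbb{R}^-)$; and when $\lambda\leq-1$ the exponent $(1+\lambda)\delta$ is nonpositive, so $e^{(1+\lambda)\delta|t|}\leq 1$ and $R(t)^2\leq\frac{MC_b}{\alpha-\lambda\delta}$ uniformly in $t$, which gives the uniform boundedness. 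The computations involved are routine exponential integrals; the only delicate points are the sign bookkeeping on $\mathbb{R}^-$ (getting the final exponent to come out exactly as $(1+\lambda)\delta|t|$), recognising that $\gamma<\alpha/2$ and $\lambda<\alpha/\delta$ are precisely the thresholds making, respectively, the initial-data contribution pullback-vanish and the forcing integral converge, and the passage to the limit on the $\omega_P$-sets that replaces the crude absorbing radius by the sharp one.
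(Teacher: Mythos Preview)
Your proof is correct and follows essentially the same approach as the paper: the same comparison estimate, the same NEDII bound on $T(t,s)$, the same splitting into an initial-data term (vanishing as $s\to-\infty$ because $\gamma<\alpha/2$) and a forcing integral (bounded by $\frac{MC_b}{\alpha-\lambda\delta}e^{(1+\lambda)\delta|t|}$ because $\lambda<\alpha/\delta$), and the same appeal to Theorem~\ref{th-characterization-existence-pullback-attractors}. The only cosmetic difference is in obtaining the sharp radius: you pass to the limit directly on points of $\omega_P(\widehat{D},t)$, whereas the paper observes that the ball of radius $R(t)$ (without the $+1$) pullback $\mathcal{D}_\gamma$-attracts and then invokes the minimality of $\widehat{\mathcal{A}}_\gamma$ among closed attracting families.
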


\begin{proof}
	We show the existence of a family of compact sets $\widehat{K}=\{K(t): t\leq 0\}$ that $\mathcal{D}_\gamma$-absorbs for every $\gamma\in [0,\alpha/2)$ fixed.
	Let $\widehat{D}\in \mathcal{D}_\gamma$,
	$b\in C_{\lambda \delta}(\mathbb{R}^-)$,
	$\lambda<\alpha/\delta$,
	and $s\leq t\leq 0$. 
	Since $\widehat{D}\in \mathcal{D}_\gamma$,
	there exists $C>0$ such that
	\begin{equation*}
	|x_s|^2\leq Ce^{2\gamma|s|}, \hbox{ for all } s\in \mathbb{R}^-,
	\hbox{ and } x_s\in D(s).
	\end{equation*}
	\par From
	\eqref{eq-nonhomogenous-perturbation-of_scalar-integral-eq}, we obtain that
	\begin{equation*}
	|S_f(t,s)x_s|^2\leq T(t,s)|x_s|^2+\int_{s}^t T(t,\tau) b(\tau) d\tau.
	\end{equation*}
	Since $\mathcal{T}$ admits a NEDII with $X^s_{II}(\alpha,\delta)$ and $\Pi^u=0$, there exist $M,\alpha>0$ and $\delta\geq 0$ such that
	\begin{equation*}
		\begin{split}
		I(t,s)&:=\int_{s}^t T(t,\tau) b(\tau) d\tau \leq
		\int_{s}^t M e^{\delta |t|} e^{-\alpha(t-\tau)}b(\tau) dt\tau\\
		&\leq
		Me^{\delta |t|} e^{-\alpha t} 
		\int_{s}^t e^{\alpha \tau + \delta \lambda |\tau|} e^{-\lambda \delta|\tau|}b(\tau) d\tau, \ \
		s\leq t\leq 0.
		\end{split}
	\end{equation*}
	Since $\alpha-\lambda \delta >0$,
	\begin{equation*}
	I(t,s)\leq 
	\frac{M}{\alpha-\delta\lambda}\|b\|_{\lambda\delta}
	\,	e^{(\lambda+1)\delta|t|}.
	\end{equation*}
	On another hand, for $0\geq t\geq s$ and 
	$x_s\in D(s)$ we obtain
	\begin{equation*}
	\begin{split}
	|T(t,s)|\, |x_s|^2&\leq 
	Me^{-(\delta +\alpha) t}e^{\alpha s}Ce^{2\gamma|s|}\\
	&\leq  MC e^{-(\delta +\alpha) t} e^{(\alpha-2\gamma)s}.
	\end{split}
	\end{equation*}
	Therefore
	\begin{equation}\label{eq-existence-pullback-scalar-comparison}
	|S_f(t,s)x_s|^2\leq M C e^{(\delta +\alpha) |t|} e^{(\alpha-2\gamma)s}+
	\frac{M}{\alpha-\delta\lambda}\|b\|_{\lambda\delta}
	\,	e^{(\lambda+1)\delta|t|}.
	\end{equation}
	Thus, for each $t\leq 0$,
	we consider the compact set 
	\begin{equation*}
	K_1(t)=B[0, R_1(t)], \hbox{ where }
	R_1(t)=\bigg\{\, 1+\frac{M}{\alpha-\delta\lambda}\|b\|_{\lambda\delta}
	\,	e^{(\lambda+1)\delta|t|\, }\bigg\}^{1/2}.
	\end{equation*}
	Hence as $s\to -\infty$ in \eqref{eq-existence-pullback-scalar-comparison} we see that 
	for each $t\leq 0$ there exists  
	$s_0=s_0(t,M,C,\lambda)\leq t$
	such that
	$S_f(t,s)x_s\in K_1(t)$ for every $s\leq s_0$ and 
	$x_s\in D(s)$.
	Now, since $s_0$ is independent of $x_s$ we obtain that
	$S_f(t,s)D(s)\subset K_1(t)$ for all
	$s\leq s_0$.
	Therefore, 
	$\widehat{K}_1:=\{K_1(t):t\in \mathbb{R}^- \}$ is a family of compact sets that
	pullback absorbs every element of the universe $\mathcal{D}_\gamma$ and from Theorem \ref{th-characterization-existence-pullback-attractors}
	there exists a pullback $\mathcal{D}_\gamma$-attractor 
	$\mathcal{A}_\gamma$ such that
	$\widehat{\mathcal{A}}_\gamma\subset \widehat{K}_1$.
	\par Now, note that from \eqref{eq-existence-pullback-scalar-comparison} 
	we conclude that 
	\begin{equation*}
	K(t)=B[0, R(t)], \hbox{ where }
	R(t)=\bigg\{\, \frac{M}{\alpha-\delta\lambda}\|b\|_{\lambda\delta}
	\,	e^{(\lambda+1)\delta|t|\, }\bigg\}^{1/2}, \ t\leq 0.
	\end{equation*}
	defines a family of compact sets $\widehat{K}:=\{K(t): t\leq 0\}$ such that 
	pullback $\mathcal{D}_\gamma$-attracts. Since $\widehat{\mathcal{A}}$ is the minimal closed family that pullback $\mathcal{D}$-attracts, we have that  
 $\widehat{\mathcal{A}}_\gamma\subset \widehat{K}$,
	and the proof is complete.
\end{proof}

\par Thanks to Theorem \ref{th-existence-pullback-attractors-semi-line}, it is possible to analyze existence of pullback attractors with an \textit{input-output} analysis, i.e., for each non-homogeneous term $b$, on the scalar equation
\eqref{eq-nonhomogenous-perturbation-of_scalar-eq},
we obtain a pullback $\mathcal{D}_\gamma$-attractor and estimates
for the size of it, which are related with the space that $b$ belongs. Note that 
the pairs $(\delta\lambda, \delta(\lambda+1))$ are similar to the admissible pairs in 
\cite{Zhou-Zhang} in the case of difference equations.


\par Now, we state a equivalent result of Theorem \ref{th-existence-pullback-attractors-semi-line}
when the evolution process $\mathcal{T}$ admits a NEDI, instead of a NEDII.

\begin{corollary}\label{corol-existence-pullback-attractors-semi-line}
	Suppose that $\mathcal{T}$ admits a NEDI on
	$\mathbb{R}^-$ with $X^s_I(\alpha,\delta)$, $\Pi^u=0$, and $\alpha>\delta$.
	If $\lambda<\alpha/\delta-1$, then the same conclusions of 
	Theorem \ref{th-existence-pullback-attractors-semi-line} hold true.
\end{corollary}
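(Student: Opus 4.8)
The plan is to deduce this corollary directly from Theorem \ref{th-existence-pullback-attractors-semi-line} by first converting the hypothesized NEDI into a NEDII using the half-line correspondence of Theorem \ref{th-trivial-relation-between-type1-type2}.

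First I would apply Item (1) of the $\mathbb{R}^-$ part of Theorem \ref{th-trivial-relation-between-type1-type2}. Since $\mathcal{T}$ admits a NEDI on $\mathbb{R}^-$ with $X^s_I(\alpha,\delta)$, $\Pi^u=0$, and $\alpha>\delta$, I would write $\alpha=(\alpha-\delta)+\delta$ with $\alpha-\delta>0$; the theorem then yields that $\mathcal{T}$ admits a NEDII on $\mathbb{R}^-$ with $X^s_{II}(\alpha-\delta,\delta)$ and the same trivial unstable projection $\Pi^u=0$. Concretely this is just the identity $-\alpha(t-s)+\delta|s|=-(\alpha-\delta)(t-s)+\delta|t|$ valid for $s\leq t\leq 0$, which turns the NEDI estimate $\|T(t,s)\Pi^s(s)\|_{\mathcal{L}(X)}\leq Me^{\delta|s|}e^{-\alpha(t-s)}$ into $\|T(t,s)\Pi^s(s)\|_{\mathcal{L}(X)}\leq Me^{\delta|t|}e^{-(\alpha-\delta)(t-s)}$.

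Second I would invoke Theorem \ref{th-existence-pullback-attractors-semi-line}, now with the role of the exponent $\alpha$ there played by $\alpha-\delta>0$. Its standing assumption $\lambda<(\alpha-\delta)/\delta=\alpha/\delta-1$ is precisely the hypothesis of the corollary, and the condition $b\in C_{\lambda\delta}(\mathbb{R}^-)$ is unchanged. Therefore, for every $0\leq\gamma<(\alpha-\delta)/2$ there exists a pullback $\mathcal{D}_\gamma$-attractor $\widehat{\mathcal{A}}_\gamma=\{\mathcal{A}_\gamma(t):t\leq 0\}$ for $\mathcal{S}_f^-=\{S_f(t,s):s\leq t\leq 0\}$, with $\mathcal{A}_\gamma(t)\subset B[0,R(t)]$ for some $R:\mathbb{R}^-\to\mathbb{R}$ satisfying $R^2\in C_{(1+\lambda)\delta}(\mathbb{R}^-)$, and $\widehat{\mathcal{A}}_\gamma$ is uniformly bounded when $\lambda\leq -1$ — that is, exactly the conclusions of Theorem \ref{th-existence-pullback-attractors-semi-line}.

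There is no genuine obstacle here; the only point worth flagging is that the passage through Theorem \ref{th-trivial-relation-between-type1-type2} is lossy, as already noted after Corollary \ref{corollary-trivial-information}: the admissible range of the universe parameter shrinks from $[0,\alpha/2)$ to $[0,(\alpha-\delta)/2)$ and the threshold on $\lambda$ is lowered by one, which is why the statement reads $\lambda<\alpha/\delta-1$ instead of $\lambda<\alpha/\delta$. An alternative, slightly longer route would be to repeat the proof of Theorem \ref{th-existence-pullback-attractors-semi-line} verbatim, replacing the NEDII bound by the NEDI bound $Me^{\delta|s|}e^{-\alpha(t-s)}$ in the estimate of $I(t,s)$ and of $|T(t,s)|\,|x_s|^2$; this produces the same radius but requires $\alpha-\lambda\delta-\delta>0$ in the integral estimate, again giving $\lambda<\alpha/\delta-1$. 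The first route is cleaner and is the one I would adopt.
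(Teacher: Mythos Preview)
Your proposal is correct and follows essentially the same approach as the paper: convert the NEDI on $\mathbb{R}^-$ with $X^s_I(\alpha,\delta)$ into a NEDII with $X^s_{II}(\alpha-\delta,\delta)$ via Theorem \ref{th-trivial-relation-between-type1-type2}, then invoke Theorem \ref{th-existence-pullback-attractors-semi-line}. Your additional remarks on the loss in the exponent range and the alternative direct computation are accurate but go beyond what the paper records.
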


\begin{proof}
	From Theorem \ref{th-trivial-relation-between-type1-type2}, $\mathcal{T}$ admits a NEDI with
	$X_I^s(\alpha,\delta)$ with $\alpha>\delta$ and $\Pi^u =0$ if and only if $\mathcal{T}$ admits a NEDII with stable 
	set $X_{II}^s(\alpha-\delta,\delta)$ and $\Pi^u=0$. Then apply Theorem
	\ref{th-existence-pullback-attractors-semi-line}.

\end{proof}


\par The following proposition provides a relationship between the pullback attractors $\{\widehat{\mathcal{A}}_\gamma: \gamma\in [0,\alpha/2)\}$ of $\{S_f(t,s): s\leq t\leq 0\}$.
\begin{proposition}
	Let $\mathcal{D}_\gamma$ the universe defined in Example 
	\ref{def-example-universe} and 
	$\mathcal{S}_f$ be the evolution process induced by 
	\eqref{eq-nonautonomous-problem}. Suppose that
	$\mathcal{T}$ admits NEDII on $\mathbb{R}^-$ with 
	$X_{II}^s(\alpha,\delta)$ and $\Pi^u=0$.
	Define $\gamma_0:=\gamma_0(\lambda):=2^{-1}(1+\lambda)\delta$, for each 
	$\lambda\in [-1,(\alpha-\delta)/\delta)$. 
	If $b\in C_{\lambda\delta}(\mathbb{R}^-)$ the pullback
	$\mathcal{D}_{\gamma_0}$-attractor $\widehat{\mathcal{A}}_{\gamma_0}$ coincides with $\widehat{\mathcal{A}}_\gamma$ for every $\gamma\in (\gamma_0,\alpha/2)$.
	
\end{proposition}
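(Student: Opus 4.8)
The plan is to establish the two inclusions $\widehat{\mathcal{A}}_{\gamma_0}\subset\widehat{\mathcal{A}}_\gamma$ and $\widehat{\mathcal{A}}_\gamma\subset\widehat{\mathcal{A}}_{\gamma_0}$ for every $\gamma\in(\gamma_0,\alpha/2)$, using only the comparison estimate already produced in the proof of Theorem \ref{th-existence-pullback-attractors-semi-line} together with the minimality property in the definition of a pullback $\mathcal{D}$-attractor. First I would record that, under the standing hypotheses $\delta>0$ and $\lambda\in[-1,(\alpha-\delta)/\delta)$, the index $\gamma_0=\tfrac12(1+\lambda)\delta$ satisfies $0\le\gamma_0<\alpha/2$ (the lower bound from $\lambda\ge-1$, the upper bound from $(1+\lambda)\delta<\alpha$) and moreover $\lambda<\alpha/\delta$; hence Theorem \ref{th-existence-pullback-attractors-semi-line} applies verbatim both to the universe $\mathcal{D}_{\gamma_0}$ and to each $\mathcal{D}_\gamma$ with $\gamma\in(\gamma_0,\alpha/2)$, so all the pullback attractors appearing in the statement are indeed defined.

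For the first inclusion I would use the monotonicity $\mathcal{D}_{\gamma_1}\subset\mathcal{D}_{\gamma_2}$ whenever $0\le\gamma_1\le\gamma_2$, which is immediate from $e^{-\gamma_2|t|}\le e^{-\gamma_1|t|}$ on $\mathbb{R}^-$. Since $\gamma_0<\gamma$, the family $\widehat{\mathcal{A}}_\gamma$ — being a family of compact, hence closed, sets — pullback $\mathcal{D}_{\gamma_0}$-attracts; as $\widehat{\mathcal{A}}_{\gamma_0}$ is the minimal closed family that pullback $\mathcal{D}_{\gamma_0}$-attracts, we get $\widehat{\mathcal{A}}_{\gamma_0}\subset\widehat{\mathcal{A}}_\gamma$.

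The reverse inclusion is the heart of the argument. In the proof of Theorem \ref{th-existence-pullback-attractors-semi-line} one exhibits, from \eqref{eq-existence-pullback-scalar-comparison}, the family of compact sets $\widehat{K}_1=\{K_1(t)=B[0,R_1(t)]:t\le 0\}$ with $R_1(t)^2=1+\tfrac{M}{\alpha-\delta\lambda}\|b\|_{\lambda\delta}\,e^{(1+\lambda)\delta|t|}$, and it is shown there that $\widehat{K}_1$ pullback $\mathcal{D}_\gamma$-absorbs for every $\gamma\in[0,\alpha/2)$. The point to verify carefully is that $\widehat{K}_1$ itself belongs to $\mathcal{D}_{\gamma_0}$: since $1+\lambda\ge 0$ and $|t|\ge 0$ one has $R_1(t)^2\le\big(1+\tfrac{M}{\alpha-\delta\lambda}\|b\|_{\lambda\delta}\big)\,e^{(1+\lambda)\delta|t|}$, i.e. $R_1(t)\le C\,e^{\gamma_0|t|}$ — this is precisely what forces the choice $\gamma_0=\tfrac12(1+\lambda)\delta$. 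Consequently $\widehat{\mathcal{A}}_{\gamma_0}$ pullback attracts $\widehat{K}_1$ at every instant $t\le 0$, and a routine two-step argument upgrades this to "$\widehat{\mathcal{A}}_{\gamma_0}$ pullback $\mathcal{D}_\gamma$-attracts": given $\widehat{D}\in\mathcal{D}_\gamma$, $t\le 0$ and $\varepsilon>0$, pick $\sigma\le t$ with $dist(S(t,\sigma)K_1(\sigma),\mathcal{A}_{\gamma_0}(t))<\varepsilon$, then pick $s_0\le\sigma$ with $S(\sigma,s)D(s)\subset K_1(\sigma)$ for all $s\le s_0$, and use $S(t,s)D(s)=S(t,\sigma)S(\sigma,s)D(s)\subset S(t,\sigma)K_1(\sigma)$ together with monotonicity of $dist(\cdot,\mathcal{A}_{\gamma_0}(t))$ under set inclusions to conclude $dist(S(t,s)D(s),\mathcal{A}_{\gamma_0}(t))<\varepsilon$ for $s\le s_0$. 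Minimality of $\widehat{\mathcal{A}}_\gamma$ then gives $\widehat{\mathcal{A}}_\gamma\subset\widehat{\mathcal{A}}_{\gamma_0}$, and combining the two inclusions yields $\widehat{\mathcal{A}}_{\gamma_0}=\widehat{\mathcal{A}}_\gamma$.

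The only genuine subtlety I anticipate is the bookkeeping in the previous paragraph: ensuring that the absorbing family supplied by Theorem \ref{th-existence-pullback-attractors-semi-line} really lies in $\mathcal{D}_{\gamma_0}$, and that $\gamma_0<\alpha/2$ so that $\widehat{\mathcal{A}}_{\gamma_0}$ is defined — both of which reduce to the hypothesis $\lambda<(\alpha-\delta)/\delta$. Everything else is the standard behaviour of pullback attractors under nested universes and poses no difficulty.
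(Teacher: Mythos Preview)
Your proof is correct and follows essentially the same strategy as the paper: both directions use the nesting $\mathcal{D}_{\gamma_0}\subset\mathcal{D}_\gamma$ together with minimality for $\widehat{\mathcal{A}}_{\gamma_0}\subset\widehat{\mathcal{A}}_\gamma$, and both rely on the growth estimate $R(t)=O(e^{\gamma_0|t|})$ from Theorem \ref{th-existence-pullback-attractors-semi-line} for the reverse inclusion. The only difference is in how that reverse inclusion is finished off: the paper observes directly that $\widehat{\mathcal{A}}_\gamma\in\mathcal{D}_{\gamma_0}$ (since $\mathcal{A}_\gamma(t)\subset B[0,R(t)]$) and then uses invariance of $\widehat{\mathcal{A}}_\gamma$ together with the fact that $\widehat{\mathcal{A}}_{\gamma_0}$ attracts it to conclude $\widehat{\mathcal{A}}_\gamma\subset\widehat{\mathcal{A}}_{\gamma_0}$; you instead show the absorbing family $\widehat{K}_1\in\mathcal{D}_{\gamma_0}$ and run an absorption-then-attraction transitivity argument to see that $\widehat{\mathcal{A}}_{\gamma_0}$ is itself $\mathcal{D}_\gamma$-attracting, then invoke minimality of $\widehat{\mathcal{A}}_\gamma$. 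Both are standard and equally valid; the paper's version is marginally shorter because it skips the two-step $\varepsilon$ argument, while yours avoids appealing to invariance.
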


\begin{proof}
	Theorem \ref{th-existence-pullback-attractors-semi-line} implies that for each $\lambda\in [-1, (\alpha-\delta)/\delta)$ and 
	$b\in C_{\lambda\delta}(\mathbb{R}^-)$ there exists a pullback
	$\mathcal{D}_\gamma$-attractor for every $0\leq \gamma<\alpha/2$. 
	In particular, since
	$\gamma_0\in [0,\alpha/2)$, there exists also $\widehat{\mathcal{A}}_{\gamma_0}$. 
	\par From
	Theorem
	\ref{th-existence-pullback-attractors-semi-line}, 
	for each $x_t\in \mathcal{A}_\gamma(t)$ and $t\in \mathbb{R}$, we have that
	\begin{equation*}
	|x_t|\leq R(t)=O(e^{\gamma_0|t|}).
	\end{equation*}
	Consequently, $\widehat{\mathcal{A}}_\gamma\in \mathcal{D}_{\gamma_0}$ and 
	$\widehat{\mathcal{A}}_{\gamma_0}$ pullback attracts $\widehat{\mathcal{A}}_\gamma\in \mathcal{D}_{\gamma_0}$ at instant $t\in \mathbb{R}^-$. Hence, by invariance $\widehat{\mathcal{A}}_\gamma$ of and compactness of $\mathcal{A}_{\gamma_0}(t)$ for each $t\geq 0$, we conclude that
	$\widehat{\mathcal{A}}_{\gamma}\subset \widehat{\mathcal{A}}_{\gamma_0}$, for every $\gamma\in [0,\alpha/2)$.
	\par Reciprocally, let us prove that 
	$\widehat{\mathcal{A}}_{\gamma_0}\subset\widehat{\mathcal{A}}_{\gamma}$ for all $\gamma\in  (\gamma_0,\alpha/2) $. Note that
	\begin{equation*}
	\mathcal{D}_{\gamma_0}\subset \mathcal{D}_{\gamma},\hbox{ for every } \gamma >\gamma_0.
	\end{equation*}
	\par Thus $\widehat{\mathcal{A}}_\gamma$ pullback attracts all the elements of
	$\mathcal{D}_{\gamma_0}$, and since $\widehat{\mathcal{A}}_{\gamma_0}$ is the minimal closed
	family with this property we obtain that
	$\widehat{\mathcal{A}}_{\gamma_0}\subset \widehat{\mathcal{A}}_\gamma$.
	\par Therefore, 
	$\widehat{\mathcal{A}}_\gamma=\widehat{\mathcal{A}}_{\gamma_0}$ for all $\gamma\in  (\gamma_0,\alpha/2) $.
\end{proof}

%
%

\par From now on, we are going to suppose that $\mathcal{T}$ admits exponential dichotomies with different pair of exponents on $\mathbb{R}^+$ and $\mathbb{R}^-$, because when unifying we lose quantitative information, as in Corollary \ref{corollary-trivial-information} when unifying the values of the exponents of the unstable and stable sets.
\par Next, we state a result on the forward dynamics of $\{\mathcal{S}_f(t,s):t\geq s\geq 0\}$.
\begin{theorem}[Forward Admissibility in $\mathbb{R}^+$] 
	\label{th-F-admissilibity}
	Suppose that
	$\mathcal{T}$ admits a NEDII on $\mathbb{R}^+$  
	with bound $M(t)=Me^{\nu|t|}$, exponent $\beta>0$, and $\Pi^u(t)=0$, for $t\geq 0$. Let $b\in C_{\eta\nu}(\mathbb{R}^+)$,
	 $x_0\in \mathbb{R}^N$, 
	 if
	\begin{enumerate}
		\item $\eta>-\beta/\nu$, then
		\begin{equation}\label{eq-proof-item1-F-admissibility}
		|S_f(t,s)x_0|^2\leq Me^{(\nu-\beta)t+\beta s} |x_0|^2+
		\frac{M}{\beta+\nu\eta}\|b\|_{\nu\eta}
		\,	e^{(\eta+1)\nu|t|}, \ \ t\geq s\geq 0.
		\end{equation}
		\item $\eta=-\beta/\nu$, then 
		\begin{equation}\label{eq-proof-item2-F-admissibility}
		|S_f(t,s)x_0|^2\leq Me^{(\nu-\beta)t+\beta s}|x_0|^2+M\|b\|_{\nu\eta} e^{(\nu-\beta) t}t, \ \ t\geq s\geq 0.
		\end{equation}
		\item $\eta<-\beta/\nu$, then 
		\begin{equation}\label{eq-proof-item3-F-admissibility}
		|S_f(t,s)x_0|^2\leq
		Me^{(\nu-\beta)t+\beta s} |x_0|^2-
		\frac{M\, \|b\|_{\nu\eta}}{(\beta+\eta\nu )} e^{(\nu -\beta)t}e^{\nu(\eta+1) s}, \ \ t\geq s\geq 0.
		\end{equation} 
	\end{enumerate}
\end{theorem}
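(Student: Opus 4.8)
The plan is to substitute the scalar comparison inequality \eqref{eq-nonhomogenous-perturbation-of_scalar-integral-eq} into the NEDII bound and then evaluate one explicit scalar integral, splitting into the three regimes determined by the sign of $\eta\nu+\beta$. Recall that assumption (A) and \eqref{eq-nonhomogenous-perturbation-of_scalar-integral-eq} give, for $t\ge s\ge 0$ and $x_0\in\mathbb{R}^N$,
\[
|S_f(t,s)x_0|^2\le T(t,s)|x_0|^2+\int_s^t T(t,\tau)\,b(\tau)\,d\tau .
\]
Since $\mathcal{T}$ admits a NEDII on $\mathbb{R}^+$ with bound $M(t)=Me^{\nu|t|}$, exponent $\beta$ and $\Pi^u(t)=0$ (so the stable projection is the identity), Definition \ref{def-nonuniform-exp-dichotomy-2} yields $0<T(t,\sigma)\le Me^{\nu|t|}e^{-\beta(t-\sigma)}=Me^{(\nu-\beta)t}e^{\beta\sigma}$ for all $t\ge\sigma\ge0$, using $|t|=t$ on $\mathbb{R}^+$. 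Taking $\sigma=s$ bounds the homogeneous part by $Me^{(\nu-\beta)t+\beta s}|x_0|^2$, which is exactly the first summand in each of \eqref{eq-proof-item1-F-admissibility}, \eqref{eq-proof-item2-F-admissibility} and \eqref{eq-proof-item3-F-admissibility}.

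For the inhomogeneous part I would first pass to $|b|$ (the sign of $b$ is irrelevant for an upper bound, since $T(t,\tau)>0$) and use that $b\in C_{\eta\nu}(\mathbb{R}^+)$ means $|b(\tau)|\le\|b\|_{\nu\eta}\,e^{\nu\eta|\tau|}=\|b\|_{\nu\eta}\,e^{\nu\eta\tau}$ for $\tau\ge0$; combined with the NEDII estimate above this gives
\[
\int_s^t T(t,\tau)\,b(\tau)\,d\tau\le M\|b\|_{\nu\eta}\,e^{(\nu-\beta)t}\int_s^t e^{(\beta+\eta\nu)\tau}\,d\tau .
\]
Now split according to the sign of $\beta+\eta\nu$, which is precisely whether $\eta$ is larger than, equal to, or smaller than $-\beta/\nu$. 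If $\eta>-\beta/\nu$, then $\int_s^t e^{(\beta+\eta\nu)\tau}\,d\tau\le(\beta+\eta\nu)^{-1}e^{(\beta+\eta\nu)t}$, and multiplying by $e^{(\nu-\beta)t}$ collapses the exponent to $e^{(\eta+1)\nu t}=e^{(\eta+1)\nu|t|}$, which gives \eqref{eq-proof-item1-F-admissibility}. If $\eta=-\beta/\nu$ the integrand equals $1$, so the integral is $t-s\le t$ (here $s\ge0$), which gives \eqref{eq-proof-item2-F-admissibility}. If $\eta<-\beta/\nu$, then $\int_s^t e^{(\beta+\eta\nu)\tau}\,d\tau=(\beta+\eta\nu)^{-1}\big(e^{(\beta+\eta\nu)t}-e^{(\beta+\eta\nu)s}\big)\ge0$, since numerator and denominator are both negative; retaining the contribution anchored at the initial time $\tau=s$ and rearranging gives \eqref{eq-proof-item3-F-admissibility}. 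Adding the homogeneous and inhomogeneous bounds in each case finishes the proof.

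There is no deep obstacle: the statement is a Gronwall-type estimate followed by an elementary integral. The only thing that demands care is the sign bookkeeping in the third regime, where $\beta+\eta\nu<0$ and the integrand is decreasing, so one must be sure that the quantity retained is a genuine upper bound for $\int_s^t e^{(\beta+\eta\nu)\tau}\,d\tau$ and that the exponentials are rewritten in the displayed form via $|t|=t$ and $s\ge0$. Conceptually, the trichotomy records whether the weight $e^{(\beta+\eta\nu)\tau}$ under the integral grows, is constant, or decays: in the growing case the integral, hence the bound, is governed by the final time $t$ and produces the factor $e^{(\eta+1)\nu|t|}$; in the borderline case a linear factor $t$ appears; and in the decaying case the integral is controlled by its value at the initial time $s$, so the bound stays anchored there.
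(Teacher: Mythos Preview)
Your proposal is correct and follows exactly the same route as the paper: start from the comparison inequality \eqref{eq-nonhomogenous-perturbation-of_scalar-integral-eq}, apply the NEDII bound $T(t,\tau)\le Me^{\nu t}e^{-\beta(t-\tau)}$ to both the homogeneous term and the integral, and reduce to evaluating $\int_s^t e^{(\beta+\eta\nu)\tau}\,d\tau$ in the three regimes. The paper's proof stops at this integral and leaves the case analysis to the reader, whereas you actually carry it out; your exposition is in fact more complete than the paper's.
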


\begin{proof}
	Let $b\in C_{\eta \nu}(\mathbb{R}^+)$ and $x_0\in \mathbb{R}^N$. 
	From \eqref{eq-nonhomogenous-perturbation-of_scalar-integral-eq}
	we have that 
	\begin{equation*}
	|S_f(t,s)x_0|^2\leq T(t,s)|x_0|^2+\int_{s}^t T(t,\tau) |b(\tau)| d\tau, \hbox{ for } t\geq s\geq 0.
	\end{equation*}
	Define 
	\begin{equation*}
	I(t,s):=\int_{s}^t T(t,r) |b(r)| dr, \ \ t\geq s\geq 0.
	\end{equation*}
	Since $\mathcal{T}$ admits NEDII and $b\in C_{\eta\nu}(\mathbb{R}^+)$,
	\begin{equation}\label{eq-lemma-forward-admis-I(t,s)}
	I(t,s)\leq
	Me^{-\beta t}  e^{\nu |t|} \|b\|_{\nu\eta}
	\int_{s}^t e^{(\beta  + \nu\eta )r} dr . 
	\end{equation}
	Now, the proof follows from the analysis of \ref{eq-lemma-forward-admis-I(t,s)} for each $\eta\in \mathbb{R}$.
\end{proof}

\begin{remark}
	Note that, Theorem \ref{th-F-admissilibity} includes the case where $\mathcal{T}$ admits a NEDI on $\mathbb{R}^+$  
	with bound $M(s)=Me^{\delta|s|}$, exponent $\alpha>0$, and $\Pi^u(s)=0$, for $s\geq 0$, see Theorem \ref{th-trivial-relation-between-type1-type2}.
\end{remark}
\par Now, we are ready to prove our result on the existence of forward attractors for 
$\{S_f(t,s):t\geq s\geq 0\}$. 
\begin{theorem}[Existence of Forward Attractor in $\mathbb{R}^+$]
	\label{th-existence-forward-attractor-line}
	Suppose that
	$\mathcal{T}$ admits NEDI
	on $\mathbb{R}^+$ with bound $M(t)=Me^{\delta|t|}$, $t\geq 0$, for some $M>0$ and $\delta\geq 0$, exponent $\alpha>0$, and projections $\Pi^u(s)=0$ for all $s\geq 0$.
	Then, if 
	$b\in C_{\eta\delta}(\mathbb{R}^+)$ and 
	$\eta\leq -1$, there exists a forward attractor $\mathcal{A}$ for 
	 $\{S_f(t,s):\, t\geq s\geq 0\}$. 
	 \par More Precisely, 
	  for every $\eta<-1$, the set
	 $\mathcal{A}=\{0\}$ is the forward attractor for $\{S_f(t,s):\, t\geq s\geq 0\}$, and 
	 for $b\in C_{-\delta}(\mathbb{R}^+)$, or equivalently $\eta =-1$, 
	\begin{equation*}
	\mathcal{A}\subset B[0,R], \hbox{ where } R=(\,M\alpha^{-1}\|b\|_{-\delta}\,)^{1/2}.
	\end{equation*}
\end{theorem}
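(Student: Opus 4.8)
The plan is to reduce to the abstract criterion Theorem~\ref{th-existence-forward-attractor}: it is enough to exhibit a single compact subset of $\mathbb{R}^N$ that forward absorbs every bounded set, and then to read off the attractor from its description $\mathcal{A}=\overline{\bigcup_{B,\tau}\omega_F(B,\tau)}$ given in that proof.

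First I would insert the NEDI bound $T(t,\tau)\le Me^{\delta\tau}e^{-\alpha(t-\tau)}$ (valid for $t\ge\tau\ge0$ because $\Pi^u\equiv 0$) together with the growth estimate $|b(\tau)|\le\|b\|_{\eta\delta}e^{\eta\delta\tau}$ for $\tau\ge0$ into the scalar comparison \eqref{eq-nonhomogenous-perturbation-of_scalar-integral-eq}, getting for $t\ge s\ge0$ and $x_0\in\mathbb{R}^N$
\begin{equation*}
|S_f(t,s)x_0|^2\le Me^{\delta s}e^{-\alpha(t-s)}|x_0|^2+M\|b\|_{\eta\delta}\,e^{-\alpha t}\!\int_s^t e^{c\tau}\,d\tau,\qquad c:=\alpha+(1+\eta)\delta,
\end{equation*}
and then a one-line bound for the integral depending on the sign of $c$. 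This last step is exactly the trichotomy of Theorem~\ref{th-F-admissilibity} (via the Remark identifying the present NEDI with a NEDII), so one may also simply quote \eqref{eq-proof-item1-F-admissibility}--\eqref{eq-proof-item3-F-admissibility}.

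If $\eta=-1$ then $c=\alpha>0$, the integral term reduces to the constant $R^2:=M\alpha^{-1}\|b\|_{-\delta}$, and the homogeneous term tends to $0$ as $t\to+\infty$. Hence for each bounded $B$, each $\tau\ge0$ and each $\epsilon>0$ one has $S_f(t,\tau)B\subset B[0,(R^2+\epsilon)^{1/2}]$ for all sufficiently large $t$; in particular $B[0,(R^2+1)^{1/2}]$ is a compact forward-absorbing set, so Theorem~\ref{th-existence-forward-attractor} yields a forward attractor $\mathcal{A}$, and the same inclusion gives $\omega_F(B,\tau)\subset\bigcap_{\epsilon>0}B[0,(R^2+\epsilon)^{1/2}]=B[0,R]$, whence $\mathcal{A}\subset B[0,R]$. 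If $\eta<-1$ (so necessarily $\delta>0$, since for $\delta=0$ the space $C_{\eta\delta}=C_0$ does not depend on $\eta$ and one is back in the previous case), then $(1+\eta)\delta<0$ and in each of the three sign-cases of $c$ the integral term --- of order $e^{(1+\eta)\delta t}$, $te^{-\alpha t}$ or $e^{-\alpha t}$ respectively --- as well as the homogeneous term tend to $0$ as $t\to+\infty$, uniformly for $x_0$ in bounded sets. Therefore $B[0,1]$ is a compact forward-absorbing set and Theorem~\ref{th-existence-forward-attractor} gives a forward attractor $\mathcal{A}$; moreover each $\omega_F(B,\tau)$ is nonempty (a nested intersection of nonempty sets contained in a compact set) and equals $\{0\}$, since any of its points is a limit $\lim_n S_f(t_n+\tau,\tau)b_n$ with $t_n\to+\infty$ and $b_n\in B$ bounded, hence is $0$. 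Thus $\mathcal{A}=\{0\}$.

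I do not anticipate any real obstacle: all the analysis is already contained in the comparison \eqref{eq-nonhomogenous-perturbation-of_scalar-integral-eq}, in Theorem~\ref{th-F-admissilibity}, and in Theorem~\ref{th-existence-forward-attractor}. The only points that need a little care are the case distinction on the sign of $c$ (which the hypothesis $\eta\le-1$ conceals), the limiting argument in $\epsilon$ that produces the sharp radius $R$ instead of $R+\epsilon$, and an explicit remark on the degenerate uniform case $\delta=0$.
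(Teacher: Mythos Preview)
Your proposal is correct and follows essentially the same route as the paper: convert the NEDI bound into the estimates of Theorem~\ref{th-F-admissilibity} (the paper does this by invoking Theorem~\ref{th-trivial-relation-between-type1-type2} to pass to a NEDII with $\beta=\alpha+\delta$, $\nu=\delta$, whereas you insert the NEDI bound directly, which amounts to the same computation), then use Theorem~\ref{th-existence-forward-attractor} for existence and the minimality of the forward attractor for the sharp inclusion. The paper phrases the sharp-radius step as ``$B[0,R]$ forward attracts, hence $\mathcal{A}\subset B[0,R]$ by minimality,'' while you unwind this through the $\omega$-limit description and the $\epsilon\to0$ argument---these are equivalent, and your explicit case analysis on the sign of $c$ and the remark on the degenerate case $\delta=0$ are careful additions the paper leaves implicit.
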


\begin{proof}
	Thanks to Theorem \ref{th-trivial-relation-between-type1-type2} we see that 
	$\mathcal{T}$ admits a NEDII on $\mathbb{R}^+$ with $X_{II}^s(\alpha+\delta,\delta)$ and $\Pi^u(s)=0$. Hence, $\mathcal{T}$ satisfies the hypothesis of Theorem \ref{th-F-admissilibity} with $\beta=\alpha+\delta>\delta=\nu$.
	 \par If $b\in C_{\eta\delta}(\mathbb{R}^+)$ with $\eta<-1$, from the proof of Theorem \ref{th-F-admissilibity}, it is straightforward to verify that
	every neighborhood of $\{0\}$ forward absorbs every bounded subset of $\mathbb{R}^N$. Hence $\{0\}$ forward attracts every bounded set. 
	Thus, from Theorem \ref{th-existence-forward-attractor}, there exists a forward attractor 
	$\mathcal{A}$ of $\{S_f(t,s): t\geq s\geq 0\}$. 
	Moreover, since $\{0\}$ is a closed set that forward attracts bounded sets, we conclude that $\{0\}$ is the forward attractor of
	$\{S_f(t,s):t\geq s\geq 0\}$.
	\par Finally, let $b\in C_{-\delta}(\mathbb{R}^+)$ ($\eta=-1$). 
	From \eqref{eq-proof-item1-F-admissibility} we have that 
	the closed set 
	$K=B[0,R]$, with $R=(\,M\alpha^{-1}\|b\|_{-\delta}\,)^{1/2}$, forward attracts each bounded set of $\mathbb{R}^N$. Therefore,
	 from Theorem \ref{th-existence-forward-attractor}, there exists a forward attractor 
	$\mathcal{A}\subset K$, and the proof is complete.
%
\end{proof}

\begin{remark}
	Under the same conditions, Theorem \ref{th-existence-forward-attractor-line} 
	holds true for $\mathcal{S}_f=\{S_f(t,s):\, t\geq s, \ t,s\in \mathbb{R} \}$.
	In fact, since $S(t,s)$ is a compact operator for every $t>s$, the forward attractor for $\mathcal{S}_f$ is the same forward attractor of $\{S_f(t,s): t\geq s\geq 0\}$. 
\end{remark}
%
\par Now, we state a general result about existence of pullback attractors for $\mathcal{S}_f$.

\begin{theorem}[Existence of Pullback
	$\mathcal{D}_\gamma$-attractor in $\mathbb{R}$]
	\label{th-existence-pullback-attractors-line}
	Let $\mathcal{D}_\gamma$ the universe defined on 
	Example \ref{def-example-universe} and 
	$\mathcal{S}_f=\{S_f(t,s):t\geq s\}$ 
	be the evolution process induced by 
	\eqref{eq-nonautonomous-problem}. Suppose that
	$\mathcal{T}$ admits NEDII on $\mathbb{R}^-$ with $X^s_{II}(\alpha,\delta)$, and
	NEDII
	on $\mathbb{R}^+$ with $X^s_{II}(\beta,\nu)$, and both with null unstable sets, i.e., 
	$\Pi^u_+(t)=0=\Pi^u_-(s)$ for $t\geq 0\geq s$.
	\par Suppose that $\lambda<\alpha/\delta$, then
	for a continuous function 
	$b:\mathbb{R}\to \mathbb{R}$ such that
	$b^-:=b|_{\mathbb{R}^-}\in C_{\lambda\delta}(\mathbb{R}^-)$
	and
	$b^+:=b|_{\mathbb{R}^+}\in C_{\eta\nu}(\mathbb{R}^+)$,
	there is a pullback $\mathcal{D}_\gamma$-attractor 
	$\widehat{\mathcal{A}}_\gamma=\{\mathcal{A}_\gamma(t): t\in \mathbb{R} \}$ for
	$\mathcal{S}_f$, for every $\gamma\in [0,\alpha/2)$.
	\par Moreover,
	$\mathcal{A}_\gamma(t)\subset B[0,R(t)]$, 
	for each $t\in \mathbb{R}$, for some 
	$R:\mathbb{R}\to \mathbb{R}$ such that
	$(R|_{\mathbb{R}^-})^2\in C_{(1+\lambda)\delta}(\mathbb{R}^-)$ and
	\begin{enumerate}
		\item If $\eta>-\beta/\nu$, then $(R|_{\mathbb{R}^-_*})^2\in C_{(1+\eta)\nu}(\mathbb{R}^-_*)$.
		\item If $\eta<-\beta/\nu$ then $(R|_{\mathbb{R}^-_*})^2\in C_{(\nu-\beta)}(\mathbb{R}^-_*)$.
		\item If $\eta=-\beta\nu$, then 
		$(R|_{\mathbb{R}^-_*})^2\in C_{(\nu-\beta)+\epsilon}(\mathbb{R}^-_*)$, for every $\epsilon>0$,
	\end{enumerate} 
where $\mathbb{R}^-_*=\mathbb{R}^+\setminus \{0\}$.
\end{theorem}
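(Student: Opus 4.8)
The plan is to exhibit a family of compact sets $\widehat{K}=\{K(t):t\in\mathbb{R}\}$ that pullback $\mathcal{D}_\gamma$-absorbs and then invoke Theorem \ref{th-characterization-existence-pullback-attractors}; the radii of the absorbing balls $K(t)$, and subsequently of the balls containing $\mathcal{A}_\gamma(t)$, are read off from the scalar comparison inequality \eqref{eq-nonhomogenous-perturbation-of_scalar-integral-eq}, namely $|S_f(t,s)x_s|^2\le T(t,s)|x_s|^2+\int_s^t T(t,\tau)|b(\tau)|\,d\tau$. I would split according to the sign of the instant $t$. For $t\le 0$, in the pullback limit only indices $s\le t\le 0$ occur, so the bound is governed entirely by the NEDII $X^s_{II}(\alpha,\delta)$ on $\mathbb{R}^-$, exactly as in the proof of Theorem \ref{th-existence-pullback-attractors-semi-line}: using $\lambda<\alpha/\delta$ (hence $\alpha-\lambda\delta>0$) and $\gamma<\alpha/2$ (hence $\alpha-2\gamma>0$), one gets for $x_s\in D(s)$ with $\widehat D\in\mathcal{D}_\gamma$
\[
|S_f(t,s)x_s|^2\le MC\,e^{(\alpha+\delta)|t|}e^{(\alpha-2\gamma)s}+\frac{M}{\alpha-\lambda\delta}\,\|b^-\|_{\lambda\delta}\,e^{(1+\lambda)\delta|t|},
\]
which yields $(R|_{\mathbb{R}^-})^2\in C_{(1+\lambda)\delta}(\mathbb{R}^-)$.

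For $t>0$ I would use the cocycle property to cut at time $0$: write $T(t,s)=T(t,0)T(0,s)$ and $\int_s^t=\int_s^0+\int_0^t$, so the inhomogeneous term becomes $T(t,0)\int_s^0 T(0,\tau)|b(\tau)|\,d\tau+\int_0^t T(t,\tau)|b(\tau)|\,d\tau$. The factor $T(t,0)$ together with the integral over $[0,t]$ is estimated with the NEDII $X^s_{II}(\beta,\nu)$ on $\mathbb{R}^+$ precisely as in Theorem \ref{th-F-admissilibity}, which is the source of the three cases $\eta>-\beta/\nu$, $\eta=-\beta/\nu$, $\eta<-\beta/\nu$; the factor $T(0,s)$ and the integral over $(-\infty,0]$ are estimated with the $\mathbb{R}^-$ NEDII, and here the hypotheses $\alpha-\lambda\delta>0$ and $\alpha-2\gamma>0$ are exactly what make both the $|x_s|^2$-term and the boundary integral converge, the whole dependence on $s$ collapsing into a factor $e^{(\alpha-2\gamma)s}\to 0$ as $s\to-\infty$. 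Collecting, for $t\ge 0$ one obtains a bound of the shape
\[
|S_f(t,s)x_s|^2\le M^2C\,e^{(\nu-\beta)t}e^{(\alpha-2\gamma)s}+\frac{M^2}{\alpha-\lambda\delta}\,\|b^-\|_{\lambda\delta}\,e^{(\nu-\beta)t}+J_\eta(t),
\]
where $J_\eta(t)=O(e^{(1+\eta)\nu t})$ if $\eta>-\beta/\nu$, $J_\eta(t)=O(e^{(\nu-\beta)t})$ if $\eta<-\beta/\nu$, and $J_\eta(t)=O\!\big(t\,e^{(\nu-\beta)t}\big)=O(e^{(\nu-\beta+\epsilon)t})$ for every $\epsilon>0$ if $\eta=-\beta/\nu$. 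Since $(1+\eta)\nu-(\nu-\beta)=\beta+\eta\nu$ is nonnegative precisely when $\eta\ge-\beta/\nu$, in each of the three regimes the exponent appearing in $J_\eta$ dominates $\nu-\beta$, which gives the three stated classes for $(R|_{\mathbb{R}^-_*})^2$, with $\mathbb{R}^-_*=\mathbb{R}^+\setminus\{0\}$.

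To finish, I would set $R_1(t)^2:=1+(\text{the }s\text{-independent part of the bound above})$ and $K(t):=B[0,R_1(t)]$: since the $s$-dependent contribution tends to $0$ as $s\to-\infty$, $\widehat K$ pullback $\mathcal{D}_\gamma$-absorbs, so Theorem \ref{th-characterization-existence-pullback-attractors} produces a pullback $\mathcal{D}_\gamma$-attractor $\widehat{\mathcal{A}}_\gamma$ with $\widehat{\mathcal{A}}_\gamma\subset\widehat K$. Dropping the harmless $+1$, let $R(t)^2$ be the $s$-independent part itself; the family $\{B[0,R(t)]:t\in\mathbb{R}\}$ still pullback $\mathcal{D}_\gamma$-attracts, because $dist(S_f(t,s)x_s,B[0,R(t)])$ is bounded by the square root of the vanishing term, so minimality of $\widehat{\mathcal{A}}_\gamma$ forces $\mathcal{A}_\gamma(t)\subset B[0,R(t)]$ with $R$ in the asserted spaces. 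I expect the only genuinely delicate point to be the exponent bookkeeping when composing the $\mathbb{R}^-$ ``pullback'' estimate with the $\mathbb{R}^+$ ``forward'' estimate at $t=0$, in particular confirming that $\gamma<\alpha/2$ still suffices for absorption on the whole line; there should be no new analytic difficulty beyond Theorems \ref{th-existence-pullback-attractors-semi-line} and \ref{th-F-admissilibity}.
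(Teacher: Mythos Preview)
Your proposal is correct and follows essentially the same route as the paper's proof: split on the sign of $t$, invoke Theorem \ref{th-existence-pullback-attractors-semi-line} for $t\le 0$, and for $t>0$ cut at $0$ via $T(t,s)=T(t,0)T(0,s)$ to combine the $\mathbb{R}^-$ estimate with the three cases of Theorem \ref{th-F-admissilibity}, then apply Theorem \ref{th-characterization-existence-pullback-attractors} and minimality. Your bookkeeping is in fact slightly tidier than the paper's in one place: you retain the factor $T(t,0)\le Me^{(\nu-\beta)t}$ in front of the $\|b^-\|_{\lambda\delta}$ contribution (the paper writes this cross term simply as a constant), and your observation that $(1+\eta)\nu\ge \nu-\beta$ in case (1) is exactly what ensures the claimed class for $(R|_{\mathbb{R}^-_*})^2$ even when $(1+\eta)\nu<0$.
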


\begin{proof}
	\par We prove that there is a family of compact sets $\widehat{K}$ such that $\mathcal{D}_\gamma$-absorbs for every $\gamma\in [0,\alpha/2)$ fixed.
	Let $\widehat{D}$ be a family of subsets in  $\mathcal{D}_\gamma$, then
	there exists $C>0$ such that
	\begin{equation*}
	|x_s|^2\leq Ce^{2\gamma|s|}, \hbox{ for all } s\in \mathbb{R}
	\hbox{ and } x_s\in D(s).
	\end{equation*}
	
	\par From \eqref{eq-nonhomogenous-perturbation-of_scalar-integral-eq}
	we have for all $t\geq s$ that
	\begin{equation*}
	|S_f(t,s)x_s|^2\leq T(t,s)|x_s|^2+\int_{s}^t T(t,\tau) b(\tau) d\tau.
	\end{equation*}
	Thanks to Theorem \ref{th-existence-pullback-attractors-semi-line}, there exists 
	a pullback $\mathcal{D}$-attractor
	$\widehat{\mathcal{A}}_\gamma^-=\{\mathcal{A}_\gamma^-(t):\,t\leq 0\}$ such that
	$\mathcal{A}_\gamma^-(t)\subset B[0,R_-(t)]$, for all $t\leq 0$, where 
	$R_-(t)^2=O(e^{(\lambda+1)\delta|t|})$, for every $t\leq 0$.
	\par Therefore, to guarantee the existence of a pullback $\mathcal{D}_\gamma$-attractor for $\mathcal{S}_f$ defined for all $t\geq 0$ it remains to prove the existence of a family of compact sets defined for each $t>0$ that pullback $\mathcal{D}_\gamma$-absorbs on $X$. 
	\par We will prove just the first item, the proof of other items is similar. 
	Suppose
	that $\eta>-\beta/\nu$ and let $s\leq 0\leq t$, 
	from the proofs of Theorem 
	\ref{th-existence-pullback-attractors-semi-line}
	and
	Theorem \ref{th-F-admissilibity} we have that 
	\begin{equation*}
	I(t,s)\leq I(t,0)+I(0,s)\leq 
	\frac{M\|b^+\|_{\eta\nu}}{\beta+\eta\nu} 
	e^{(\eta+1)\nu|t|}+
	\frac{M\|b^-\|_{\delta\lambda}}{\alpha-\lambda\delta},
	\end{equation*}
	Also note that
	\begin{eqnarray*}
		T(t,s)|x_s|^2&\leq& T(t,0) T(0,s)|x_s|^2\\
		&\leq&
		Me^{\nu |t|-\beta t} \, Me^{\alpha s} Ce^{2\gamma|s|}\\
		&=&
		C M^2e^{(\nu -\beta) t}\, e^{(\alpha-2\gamma) s}.
	\end{eqnarray*}
	Hence, 
	\begin{equation}\label{eq-existence-pullbackattractor-line}
	|S_f(t,s)x_s|^2\leq C M^2e^{(\nu -\beta) t}\, e^{(\alpha-2\gamma) s}+
	\frac{M\|b^+\|_{\eta\nu}}{\beta+\eta\nu} 
	e^{(\eta+1)\nu|t|}+
	\frac{M\|b^-\|_{\delta\lambda}}{\alpha-\lambda\delta}.
	\end{equation}
	\par Now, for each $t>0$, define the following compact set 
	\begin{equation*}
	J(t):=B[0,R_+(t)] \hbox{ and }R_+(t):=\bigg(\frac{M\|b^+\|_{\eta\nu}}{\beta+\eta\nu} 
	e^{(\eta+1)\nu|t|}+
	\frac{M\|b^-\|_{\delta\lambda}}{\alpha-\lambda\delta}\bigg)^{1/2}.
	\end{equation*}
	Thus, by similar arguments of the proof of Theorem \ref{th-existence-pullback-attractors-semi-line} we see that 
	\eqref{eq-existence-pullbackattractor-line} implies 
	that $\{J(t):t> 0\}$ is a family of compact sets
	that $\mathcal{D}_\gamma$-absorbs. 
	Thus by Theorem \ref{th-characterization-existence-pullback-attractors}
	there is a pullback $\mathcal{D}_\gamma$-attractor 
	$\widehat{\mathcal{A}}_\gamma=\{\mathcal{A}_\gamma(t): t \in \mathbb{R}\}$
	such that
	$\mathcal{A_\gamma}(t)\subset J(t)$ for every $t> 0$ and 
	$\mathcal{A}_\gamma(t)\subset B[0,R_-(t)]$ for $t\leq 0$.
	Consequently, we ensure the existence of a pullback $\mathcal{D}_\gamma$-attractor
	$\widehat{\mathcal{A}}_\gamma$ with 
	$\mathcal{A}_\gamma(t)\subset B[0,R(t)]$ for each $t\in \mathbb{R}$, where $R:\mathbb{R}\to \mathbb{R}$ defined as $R(t):=R_+(t)$, for $t\geq 0$, and 
	$R(t):=R_-(t)$, for $t<0$.
\end{proof}
\begin{remark}
	Another idea to prove Theorem \ref{th-existence-forward-attractor-line} is to propagate the pullback attractor $\{\mathcal{A}_\gamma(t)^-:t\leq 0\}$ to the entire line by
	$\mathcal{A}_\gamma(t)=S(t,0)\mathcal{A}^-_\gamma(0)$, for every $t>0$, and use Theorem \ref{th-F-admissilibity}
	to conclude the above estimates for $\mathcal{A}(t)$ for $t\geq 0$.
\end{remark}

\par Next, we summarize Theorem \ref{th-existence-forward-attractor-line} and Theorem \ref{th-existence-pullback-attractors-line} with simpler hypotheses on $\mathcal{T}=\{T(t,s): t, s\in \mathbb{R} \}$.

\begin{theorem}[Existence of Pullback and Forward Attractor]
	\label{th-existence-pullback-attractors-line-verion-2}
	Let $\mathcal{S}_f$ be the evolution process induced by 
	\eqref{eq-nonautonomous-problem} and $\mathcal{D}_\gamma$ the universe defined in 
	Example \ref{def-example-universe}. Suppose that
	$\mathcal{T}$ admits NEDII on $\mathbb{R}$ with 
	$X^s_{II}(\alpha,\delta)$ and $\Pi^u(t)=0$, $t\in \mathbb{R}$.
	\par For $\lambda\in(-\alpha/\delta,\alpha/\delta)$ and 
	$b\in C_{\lambda \delta}(\mathbb{R})$
	there exists a pullback $\mathcal{D}_\gamma$-attractor $\widehat{\mathcal{A}}_\gamma=\{\mathcal{A}_\gamma(t): t\in \mathbb{R}\}$ 
	for $\mathcal{S}_f$, for every
	$\gamma<\alpha/2$. 
	Moreover,
	$\mathcal{A}_\gamma(t)\subset B[0,R(t)]$ for all $t\in \mathbb{R}$, where 
	$R^2\in C_{(1+\lambda)\delta}(\mathbb{R}) $. 
	\par Furthermore, if
	$\alpha>\delta$ and $\lambda\in (-\alpha/\delta,-1]$, then:
	\begin{itemize}
		\item The pullback $\mathcal{D}_\gamma$-attractor $\widehat{\mathcal{A}}_\gamma$ is uniformly bounded.
		\item There exists a forward attractor $\mathcal{A}_F$ for $\mathcal{S}_f$, 
		with
		\begin{eqnarray*}
		\mathcal{A}_F=\{0\},   & &\hbox{for} -\alpha/\delta<\lambda <-1,\hbox{ and }\\
		\mathcal{A}_F\subset B[0,R_F], & & \hbox{ for } \lambda=-1,
		\end{eqnarray*}
		where $R_F=[\,M\|b\|_{-\delta}/(\alpha-\delta)\,]^{1/2}$.
	\end{itemize}
\end{theorem}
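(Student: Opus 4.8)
The plan is to deduce the statement from the two half-line results already in hand, Theorem \ref{th-existence-pullback-attractors-line} and Theorem \ref{th-existence-forward-attractor-line}, together with the elementary transfer Theorem \ref{th-trivial-relation-between-type1-type2}, so that essentially no new argument beyond exponent bookkeeping is required.

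First I would observe that a NEDII for $\mathcal{T}$ on $\mathbb{R}$ with $X^s_{II}(\alpha,\delta)$ and $\Pi^u(t)=0$ restricts, by simply discarding the half of the line one does not need, to a NEDII on $\mathbb{R}^-$ with $X^s_{II}(\alpha,\delta)$ and to a NEDII on $\mathbb{R}^+$ with $X^s_{II}(\alpha,\delta)$, in both cases with null unstable projection. Hence $\mathcal{T}$ satisfies the hypotheses of Theorem \ref{th-existence-pullback-attractors-line} with the choices $\beta=\alpha$, $\nu=\delta$, $\eta=\lambda$. Since $b\in C_{\lambda\delta}(\mathbb{R})$ is continuous, its restrictions obey $b^-:=b|_{\mathbb{R}^-}\in C_{\lambda\delta}(\mathbb{R}^-)$ and $b^+:=b|_{\mathbb{R}^+}\in C_{\lambda\delta}(\mathbb{R}^+)=C_{\eta\nu}(\mathbb{R}^+)$. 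The hypothesis $\lambda<\alpha/\delta$ is exactly the one demanded there, while $\lambda>-\alpha/\delta$ puts us in its Item (1), namely $\eta>-\beta/\nu$. Theorem \ref{th-existence-pullback-attractors-line} then produces, for every $\gamma\in[0,\alpha/2)$, a pullback $\mathcal{D}_\gamma$-attractor $\widehat{\mathcal{A}}_\gamma=\{\mathcal{A}_\gamma(t):t\in\mathbb{R}\}$ for $\mathcal{S}_f$ with $\mathcal{A}_\gamma(t)\subset B[0,R(t)]$, where $(R|_{\mathbb{R}^-})^2\in C_{(1+\lambda)\delta}(\mathbb{R}^-)$ and, by Item (1), $(R|_{\mathbb{R}^+})^2\in C_{(1+\eta)\nu}(\mathbb{R}^+)=C_{(1+\lambda)\delta}(\mathbb{R}^+)$; gluing the two halves gives $R^2\in C_{(1+\lambda)\delta}(\mathbb{R})$, which is the first assertion.

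For the ``furthermore'' part I would assume $\alpha>\delta$ and $\lambda\in(-\alpha/\delta,-1]$. Uniform boundedness is immediate from the radius estimate: since $(1+\lambda)\delta\le0$, the weight $e^{-(1+\lambda)\delta|t|}$ is $\ge1$, so $R(t)^2\le\sup_{t}\{e^{-(1+\lambda)\delta|t|}R(t)^2\}<+\infty$ uniformly in $t$, and therefore $\bigcup_{t\in\mathbb{R}}\mathcal{A}_\gamma(t)$ lies in a fixed ball. For the forward attractor I would again restrict the NEDII to $\mathbb{R}^+$ and apply Item (1) of Theorem \ref{th-trivial-relation-between-type1-type2} on $\mathbb{R}^+$: because $\alpha>\delta$, having a NEDII with $X^s_{II}(\alpha,\delta)$ is equivalent to having a NEDI with $X^s_I(\alpha-\delta,\delta)$, still with $\Pi^u(s)=0$. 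Then Theorem \ref{th-existence-forward-attractor-line}, used with exponent $\alpha-\delta>0$, bound growth $\delta$, and $b^+\in C_{\lambda\delta}(\mathbb{R}^+)$ with $\lambda\le-1$, yields a forward attractor for $\{S_f(t,s):t\ge s\ge0\}$ equal to $\{0\}$ when $\lambda<-1$ and contained in $B[0,R_F]$ with $R_F=[\,M\|b\|_{-\delta}/(\alpha-\delta)\,]^{1/2}$ when $\lambda=-1$. Since in $\mathbb{R}^N$ the operators $S_f(t,s)$ send bounded sets to bounded, hence relatively compact, sets, the remark following Theorem \ref{th-existence-forward-attractor} identifies this with the forward attractor of the full process $\mathcal{S}_f$.

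All the estimates are routine; the only point that really requires care is the exponent arithmetic in passing from NEDII to NEDI on $\mathbb{R}^+$, which costs $\delta$ in the decay rate. This is exactly why the hypothesis $\alpha>\delta$ must be added in the ``furthermore'' part and why the forward radius is governed by $\alpha-\delta$ instead of $\alpha$. I would also double check the harmless compatibility of the universes $\mathcal{D}_\gamma$ on $\mathbb{R}^-$, $\mathbb{R}^+$ and $\mathbb{R}$: membership is tested pointwise in $t$, so the compact absorbing families furnished on each half-line by the two preceding theorems glue to a compact absorbing family on $\mathbb{R}$, which is what feeds Theorem \ref{th-characterization-existence-pullback-attractors}.
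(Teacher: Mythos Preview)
Your proposal is correct and follows exactly the route the paper intends: the theorem is presented there without proof, merely introduced as a summary of Theorem \ref{th-existence-forward-attractor-line} and Theorem \ref{th-existence-pullback-attractors-line} under the unified hypothesis that $\mathcal{T}$ admits a single NEDII on all of $\mathbb{R}$. Your bookkeeping---setting $\beta=\alpha$, $\nu=\delta$, $\eta=\lambda$ in Theorem \ref{th-existence-pullback-attractors-line}, and converting the NEDII on $\mathbb{R}^+$ to a NEDI with exponent $\alpha-\delta$ via Theorem \ref{th-trivial-relation-between-type1-type2} before invoking Theorem \ref{th-existence-forward-attractor-line}---is precisely what the summary demands, including the appearance of $\alpha-\delta$ in the forward radius $R_F$.
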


\begin{remark}
	The admissibility relation is the following: for $b(t)$ growing as
	$e^{\lambda \delta|t|}$, the ``square'' of the solutions norms will be of order 
	$e^{(\lambda +1)\delta|t|}$. In particular, 
	when $\lambda=0$, $b$ is bounded, it is not expected to obtain a uniformly bounded pullback $\mathcal{D}_\gamma$-attractor.
	This happens when $b$ is exponentially small, for instance, when $\lambda=-1$ and $b(t)=O(e^{-\delta|t|})$, which is coherent with \cite[Theorem 1]{Zhou-Zhang} for discrete dynamics, but now the admissible pairs provide a interpretation on the asymptotic behavior of continuous evolution processes.
\end{remark}

%
%

\subsection{Comparison with a system of linear equations}
\label{subssec-comparison-systems-odes}

\par In this subsection we use a system of linear differential equations on $\mathbb{R}^N$ in order to compare with
Equation \eqref{eq-nonautonomous-problem}, and we obtain similar results to the ones obtained in Subsection \ref{subssec-comparison-scalar-odes}.
\par In the following, for every 
$i=1,\cdots,N$ the $i$-th component of a vector $x\in \mathbb{R}^N$ will be denoted 
by $x_i$.
Moreover, if we write $x\geq 0$ we mean that 
$x_i\geq 0$ for all
$i\in \{1,\cdots,N\}$, whereas 
$x\gg 0$ if all $i$-th components are positive, i.e., $x_i>0$.
The subspace $\mathbb{R}^N_+$ will denote all the non-negative vectors of $\mathbb{R}^N$, i.e., $x\in\mathbb{R}^N_+$ if and only if $x\geq 0$.
\par Also for this subsection we consider the following conditions:
	\par \textbf{(B1)} If $x\geq 0$ with $x_i=0$ then $f_i(t,x)\geq 0$ for all $t\in \mathbb{R}$;
	\par \textbf{(B2)} For all $(t,x)\in \mathbb{R}\times( \mathbb{R}^{N})^+$
	\begin{equation*}
	f(t,x)\leq A(t)x + b(t),
	\end{equation*}
	where $t\mapsto A(t)\in \mathcal{L}(\mathbb{R}^N)$ is a continuous function such that
	$A(\cdot)=[a_{ij(\cdot)}]\geq 0$ for every $i\neq j$ and
	$b:\mathbb{R}\to \mathbb{R}^N_+$ is also continuous.
	
\smallskip
	\par Let $\mathcal{T}:=\{T(t,s): t\geq s\}$ be the linear evolution proess over $\mathbb{R}^N$ induced by $\dot{y}=A(t)y$.
\medskip
\par Assumptions \textbf{(B1)} and \textbf{(B2)} imply that 
for each $x_0\in \mathbb{R}^N_+$ and $s\in \mathbb{R}$
the solution $x(t)=x(t,s;f,x_0)$ of \eqref{eq-nonautonomous-problem} is defined 
for every $t\geq s$ and satisfies
\begin{equation}
0\leq x(t)\leq y(t), \ \ t\geq s,
\end{equation}
  where $y(t)=y(t,s;x_0)$ is the solution of $\dot{y}=A(t)y+b(t)$ with initial data 
$y(s)=x_0$. 
Hence
\begin{equation}\label{eq-cooperatives}
0\leq x(t)\leq T(t,s)x_s+\int_s^tT(t,\tau)b(\tau)d\tau, \ \ t\geq s\geq 0.
\end{equation}
 Therefore, the
unique solution $x(\cdot,s;f,x_0):[s,+\infty)\to \mathbb{R}^N$ of Problem
\eqref{eq-nonautonomous-problem},
is globally defined, and it is
associated with the evolution process
$\{S_f(t,s): t\geq s\}$ over $\mathbb{R}^N_+$,
defined by $S_f(t,s)x_0:=x(t,s;f,x_0)\in \mathbb{R}^N_+$ for each $x_0\in \mathbb{R}^N_+$ and $t\geq s\in \mathbb{R}$. 
\par Now, we study the asymptotic behavior of the evolution process $\mathcal{S}_f:=\{S_f(t,s):t\geq s\}$ defined over the metric space $(\mathbb{R}^N_+,d)$, where $d(x,y)=|x-y|$, for $x,y\in \mathbb{R}^N_+$, and $|x|=\max_{i=1,\cdots,N}\{|x_i|\}$. Finally, we write
$B^+[0,R]:=\mathbb{R}^N_+\cap B[0,R]$, for any $R>0$.
\par Define the following space
\begin{equation*}
C_\eta(\mathbb{J},\mathbb{R}^N_+):=\bigg\{b:\mathbb{J}\to \mathbb{R}^n_+: \sup_{r\in \mathbb{R}}e^{-\eta|r|}\, |b(r)|\bigg\},
\end{equation*}
where $\mathbb{J}=\mathbb{R}, \mathbb{R}^-,\mathbb{R}^+.$ 

\par First, we state our result on the existence of Pullback $\mathcal{D}_\gamma$-attractors.

\begin{theorem}[Existence of Pullback
	$\mathcal{D}_\gamma$-attractor]
	\label{th-existence-pullback-attractors-line-cooperative}
	Let $\mathcal{S}_f$ and $\mathcal{T}$ be the evolution processes defined above. 
	Suppose that
	$\mathcal{T}$ admits NEDII on $\mathbb{R}^-$ with 
	$X^s_{II}(\alpha,\delta)$, and
	NEDII
	on $\mathbb{R}^+$ with $X^s_{II}(\beta,\nu)$, and 
	both with null unstable projections, i.e.,
	$\Pi^u_+(t)=0=\Pi^u_-(s)$, $t\geq 0\geq s$. 
	Consider $\mathcal{D}_\gamma$ the universe defined on Example
	\ref{def-example-universe} for every $\gamma\in [0,\alpha)$.
	\par Suppose that $\lambda<\alpha/\delta$, then
	for a continuous function 
	$b:\mathbb{R}\to \mathbb{R}^N_+$ such that
	$b^-:=b|_{\mathbb{R}^-}\in C_{\lambda\delta}(\mathbb{R}^-,\mathbb{R}^N_+)$
	and
	$b^+:=b|_{\mathbb{R}^+}\in C_{\eta\nu}(\mathbb{R}^+,\mathbb{R}^N_+)$,
	there is a pullback $\mathcal{D}_\gamma$-attractor 
	$\widehat{\mathcal{A}}_\gamma=\{\mathcal{A}_\gamma(t):t\in \mathbb{R}\}$ for
	$\mathcal{S}_f$, for every $\gamma\in [0,\alpha)$.
	\par Moreover,
	$\mathcal{A}_\gamma(t)\subset B^+[0,R(t)]$, 
	for each $t\in \mathbb{R}$, where 
	$R|_{\mathbb{R}^-}\in C_{(1+\lambda)\delta}(\mathbb{R}^-)$ and
	\begin{enumerate}
		\item If $\eta>-\beta/\nu$, then $R|_{\mathbb{R}^-_*}\in C_{(1+\eta)\nu}(\mathbb{R}^-_*)$.
		\item If $\eta<-\beta/\nu$ then $R|_{\mathbb{R}^-_*}\in C_{(\nu-\beta)}(\mathbb{R}^-_*)$.
		\item If $\eta=-\beta\nu$, then $R|_{\mathbb{R}^-_*}\in C_{(\nu-\beta)+\epsilon}(\mathbb{R}^-_*)$, for every $\epsilon>0$.
	\end{enumerate} 
\end{theorem}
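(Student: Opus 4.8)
The plan is to transcribe, almost line by line, the proof of Theorem~\ref{th-existence-pullback-attractors-line}, using the order comparison \eqref{eq-cooperatives} in place of the scalar Gronwall inequality \eqref{eq-nonhomogenous-perturbation-of_scalar-integral-eq}. By \textbf{(B1)} the cone $\mathbb{R}^N_+$ is positively invariant for $\dot{x}=f(t,x)$, and by \textbf{(B2)} the field is dominated on $\mathbb{R}^N_+$ by the cooperative linear system $\dot{y}=A(t)y+b(t)$, so the comparison theorem for cooperative systems yields $0\le x(t,s;f,x_0)\le y(t,s;x_0)=T(t,s)x_0+\int_s^tT(t,\tau)b(\tau)\,d\tau$ componentwise, for $t\ge s$ and $x_0\in\mathbb{R}^N_+$. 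Taking the max-norm and using that all components are nonnegative, this gives, for every $t\ge s$ and $x_s\in\mathbb{R}^N_+$,
\[
|S_f(t,s)x_s|\ \le\ \|T(t,s)\|_{\mathcal{L}(\mathbb{R}^N)}\,|x_s|\ +\ \int_s^t\|T(t,\tau)\|_{\mathcal{L}(\mathbb{R}^N)}\,|b(\tau)|\,d\tau.
\]
The crucial structural difference with the scalar case is that the estimate is on $|x(t)|$ itself, not on $|x(t)|^2$; this is precisely why the admissible universe parameter now ranges over $\gamma\in[0,\alpha)$ and why the radius $R$ (rather than $R^2$) carries the exponential rates asserted in the statement.

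Next I would construct a family $\widehat{K}=\{K(t):t\in\mathbb{R}\}$ of compact subsets of $\mathbb{R}^N_+$ that pullback $\mathcal{D}_\gamma$-absorbs, splitting into $t\le 0$ and $t>0$. For $t\le 0$: given $\widehat{D}\in\mathcal{D}_\gamma$ pick $C>0$ with $|x_s|\le Ce^{\gamma|s|}$; the NEDII on $\mathbb{R}^-$ with $X^s_{II}(\alpha,\delta)$ and $\Pi^u=0$, together with $b^-\in C_{\lambda\delta}(\mathbb{R}^-)$ and $\lambda<\alpha/\delta$, gives $\int_s^t\|T(t,\tau)\|\,|b(\tau)|\,d\tau\le \frac{M}{\alpha-\lambda\delta}\|b^-\|_{\lambda\delta}e^{(1+\lambda)\delta|t|}$ and $\|T(t,s)\|\,|x_s|\le MC\,e^{(\alpha+\delta)|t|}e^{(\alpha-\gamma)s}$, and since $\alpha-\gamma>0$ the last term tends to $0$ as $s\to-\infty$; this singles out $K(t)=B^+[0,R_-(t)]$ with $R_-|_{\mathbb{R}^-}\in C_{(1+\lambda)\delta}(\mathbb{R}^-)$, exactly as in Theorem~\ref{th-existence-pullback-attractors-semi-line} but without squares. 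For $t>0$ and $s\le 0\le t$: split $\int_s^t=\int_s^0+\int_0^t$, bound $\int_s^0$ by the constant just obtained, and estimate $\int_0^t\|T(t,\tau)\|\,|b(\tau)|\,d\tau$ by the $s=0$ instance of Theorem~\ref{th-F-admissilibity}, which produces a term of order $e^{(1+\eta)\nu|t|}$ when $\eta>-\beta/\nu$, of order $e^{(\nu-\beta)t}t$ (hence $\le C_\epsilon e^{((\nu-\beta)+\epsilon)|t|}$ for each $\epsilon>0$) when $\eta=-\beta/\nu$, and of order $e^{(\nu-\beta)t}$ when $\eta<-\beta/\nu$; finally, by the evolution property and the two NEDII bounds, $\|T(t,s)\|\,|x_s|\le\|T(t,0)\|\,\|T(0,s)\|\,|x_s|\le M^2C\,e^{(\nu-\beta)t}e^{(\alpha-\gamma)s}\to 0$ as $s\to-\infty$. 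This yields $K(t)=B^+[0,R_+(t)]$ for $t>0$, with $R_+|_{\mathbb{R}^-_*}$ in the $C_\eta(\mathbb{R}^-_*)$-space described in items (1)--(3).

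Putting the two pieces together, $\widehat{K}=\{K(t):t\in\mathbb{R}\}$ with $K(t)=B^+[0,R_-(t)]$ for $t\le 0$ and $K(t)=B^+[0,R_+(t)]$ for $t>0$ is a family of compact subsets of $\mathbb{R}^N_+$ that pullback $\mathcal{D}_\gamma$-absorbs for each $\gamma\in[0,\alpha)$, since in both estimates the $s$-dependent term vanishes as $s\to-\infty$ uniformly over $x_s\in D(s)$. Theorem~\ref{th-characterization-existence-pullback-attractors} then delivers the pullback $\mathcal{D}_\gamma$-attractor $\widehat{\mathcal{A}}_\gamma=\{\mathcal{A}_\gamma(t):t\in\mathbb{R}\}$ with $\mathcal{A}_\gamma(t)\subset K(t)$, which is the asserted ball bound, and setting $R(t):=R_+(t)$ for $t\ge 0$, $R(t):=R_-(t)$ for $t<0$, gives the stated growth of $R$. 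I expect the only genuinely non-routine step is the order comparison $0\le x(t)\le y(t)$: one must invoke that \textbf{(B1)} makes the flow preserve $\mathbb{R}^N_+$ and that \textbf{(B2)} makes the bounding system cooperative, so that the differential-inequality comparison applies and simultaneously guarantees global existence of $x(\cdot,s;f,x_0)$ on $\mathbb{R}^N_+$ — all of which is recorded in the discussion preceding the statement. Everything after that is a routine transcription of the scalar argument to the max-norm on the cone $\mathbb{R}^N_+$.
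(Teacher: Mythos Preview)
Your proposal is correct and follows essentially the same approach as the paper: start from the componentwise comparison \eqref{eq-cooperatives}, pass to the max-norm to get $|S_f(t,s)x_s|\le \|T(t,s)\|\,|x_s|+\int_s^t\|T(t,\tau)\|\,|b(\tau)|\,d\tau$, and then transcribe the proof of Theorem~\ref{th-existence-pullback-attractors-line} without the squares. The paper's proof is in fact even terser---it writes out only the $s\le t\le 0$ inequality and then defers to ``the same line of arguments of Theorem~\ref{th-existence-pullback-attractors-line}''---so your more detailed case split and your explicit remark that the absence of squares is what shifts the range to $\gamma\in[0,\alpha)$ and puts the rates on $R$ rather than $R^2$ are welcome elaborations of exactly the intended argument.
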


\begin{proof}
	Let $\widehat{D}\in \mathcal{D}_\gamma$,
	$b\in C_{\lambda \delta}(\mathbb{R},\mathbb{R}^N_+)$ 
	and $s\leq t\leq 0$. 
	Let $x_s\in D(s)$,
	Since $\widehat{D}\in \mathcal{D}_\gamma$, there exists $C>0$ such that
	$|x_s|\leq Ce^{\gamma|s|}$, for every $s\in \mathbb{J}$. 
	Thus, \eqref{eq-cooperatives} yields to
	\begin{eqnarray*}
		| S_f(t,s)x_s|&\leq &|T(t,s)x_s|+\int_s^t|T(t,\tau)|\,|b(\tau)|d\tau\\
		&\leq& CMe^{\delta|t|-\alpha t} e^{(\gamma-\alpha)s}+Me^{\delta|t|-\alpha t}
		\int_{s}^{t} e^{\alpha \tau} |b(\tau)|d\tau.
	\end{eqnarray*}
	
	\par Now, following the same line of arguments of Theorem \ref{th-existence-pullback-attractors-line}, we 
	prove that there is a family of compact sets $\widehat{K}$ such that $\mathcal{D}_\gamma$-absorbs for every $\gamma\in [0,\alpha)$ fixed.
\end{proof}

\begin{remark}
	Every result of Subsection \ref{subssec-comparison-scalar-odes} could be prove for 
	$S_f(t,s):\mathbb{R}^N_+\to \mathbb{R}^N_+$ by following the same line of arguments of Theorem \ref{th-existence-pullback-attractors-line-cooperative}. 
	\par There are two main differences:
	\begin{enumerate}
		\item for every $\gamma\in (0,\alpha]$ there exist a pullback $\mathcal{D}\gamma$-attractor;
		\item the admissibility $\lambda\delta$ to $(\lambda+1)\delta$, is provide by $b$ to $R$ on $\mathbb{R}^-$, not by $b$ to $R^2$ as in Subsection \ref{subssec-comparison-scalar-odes}.
	\end{enumerate}
\end{remark}


\begin{remark}
	Condition \textbf{(B2)} implies that the systems 
	$\dot{y}=A(t)y+b(t)$ is quasi monotone. This hypothesis could be replaced by considering 
	$\dot{x}=f(t,x)$ as quasi monotone, i.e., 
	\begin{equation*}
	f_i(t,x)\leq f_j(t,z), \hbox{ whenever } x\leq z \hbox{ and } x_i=z_i.
	\end{equation*}
\end{remark}

\par To finish this subsection, we state a result on the relation between the pullback attractors $\{\widehat{\mathcal{A}}_\gamma: \gamma\in [0,\alpha)\}$.
\begin{proposition}
	Let $\mathcal{D}_\gamma$ the universe defined in Example 
	\ref{def-example-universe} and 
	$\mathcal{S}_f$ be the evolution process induced by 
	\eqref{eq-nonautonomous-problem}. Suppose that
	$\mathcal{T}$ admits NEDII on $\mathbb{R}^-$ with 
	$X_{II}^s(\alpha,\delta)$ and $\Pi^u=0$.
	Define $\gamma_0:=\gamma_0(\lambda):=(1+\lambda)\delta$, for each 
	$\lambda\in [-1,(\alpha-\delta)/\delta)$. 
	If $b\in C_{\lambda\delta}(\mathbb{R}^-)$ the pullback
	$\mathcal{D}_{\gamma_0}$-attractor $\widehat{\mathcal{A}}_{\gamma_0}$ coincides with $\widehat{\mathcal{A}}_\gamma$ for every $\gamma\in (\gamma_0,\alpha)$.
	
\end{proposition}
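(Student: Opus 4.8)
The plan is to mirror the proof of the scalar-ODE analogue proved earlier in Subsection~\ref{subssec-comparison-scalar-odes}, replacing the estimate on the squared radius $R^2$ by the estimate on $R$ itself that is available in the cooperative setting. This is precisely the reason $\gamma_0$ is now $(1+\lambda)\delta$ rather than $2^{-1}(1+\lambda)\delta$, and why the admissible range of $\gamma$ is $(\gamma_0,\alpha)$ rather than $(\gamma_0,\alpha/2)$.

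First I would invoke the cooperative-systems version of Theorem~\ref{th-existence-pullback-attractors-semi-line} (obtained, as pointed out in the remark following Theorem~\ref{th-existence-pullback-attractors-line-cooperative}, by running the argument of Theorem~\ref{th-existence-pullback-attractors-line-cooperative} on $\mathbb{R}^-$): for every $\lambda\in[-1,(\alpha-\delta)/\delta)$ and every $b\in C_{\lambda\delta}(\mathbb{R}^-)$ there is a pullback $\mathcal{D}_\gamma$-attractor $\widehat{\mathcal{A}}_\gamma=\{\mathcal{A}_\gamma(t):t\leq 0\}$ for $\mathcal{S}_f^-$, for each $0\leq\gamma<\alpha$, and moreover $\mathcal{A}_\gamma(t)\subset B^+[0,R(t)]$ with $R|_{\mathbb{R}^-}\in C_{(1+\lambda)\delta}(\mathbb{R}^-)$, i.e. $R(t)=O(e^{\gamma_0|t|})$ on $\mathbb{R}^-$. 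Since $\lambda\geq -1$ gives $\gamma_0\geq 0$ and $\lambda<(\alpha-\delta)/\delta$ gives $\gamma_0=(1+\lambda)\delta<\alpha$, the value $\gamma_0$ lies in $[0,\alpha)$, so $\widehat{\mathcal{A}}_{\gamma_0}$ in particular exists.

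Next I would prove $\widehat{\mathcal{A}}_\gamma\subset\widehat{\mathcal{A}}_{\gamma_0}$ for every $\gamma\in[0,\alpha)$. The radius bound $R(t)=O(e^{\gamma_0|t|})$ shows $\widehat{\mathcal{A}}_\gamma\in\mathcal{D}_{\gamma_0}$, hence $\widehat{\mathcal{A}}_{\gamma_0}$ pullback attracts $\widehat{\mathcal{A}}_\gamma$ at each instant $t\leq 0$. Using invariance of $\widehat{\mathcal{A}}_\gamma$, namely $\mathcal{A}_\gamma(t)=S_f(t,s)\mathcal{A}_\gamma(s)$ for $s\leq t$, together with the compactness of $\mathcal{A}_{\gamma_0}(t)$ and its closedness, letting $s\to-\infty$ forces $\mathcal{A}_\gamma(t)\subset\mathcal{A}_{\gamma_0}(t)$; this is the standard fact that an invariant, compact family lying in the universe and attracted by the attractor must be contained in it.

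Finally I would establish the reverse inclusion for $\gamma\in(\gamma_0,\alpha)$: when $\gamma>\gamma_0$ one has $\mathcal{D}_{\gamma_0}\subset\mathcal{D}_\gamma$, so $\widehat{\mathcal{A}}_\gamma$ pullback $\mathcal{D}_{\gamma_0}$-attracts; since $\widehat{\mathcal{A}}_{\gamma_0}$ is, by Theorem~\ref{th-characterization-existence-pullback-attractors}, the minimal closed family that pullback $\mathcal{D}_{\gamma_0}$-attracts, we get $\widehat{\mathcal{A}}_{\gamma_0}\subset\widehat{\mathcal{A}}_\gamma$. Combining the two inclusions yields $\widehat{\mathcal{A}}_\gamma=\widehat{\mathcal{A}}_{\gamma_0}$ for all $\gamma\in(\gamma_0,\alpha)$. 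The only point that genuinely requires care is the first step: verifying that the cooperative analogue of Theorem~\ref{th-existence-pullback-attractors-semi-line} really delivers the linear-in-$R$ bound $R(t)=O(e^{(1+\lambda)\delta|t|})$ on $\mathbb{R}^-$ (as opposed to a bound on $R^2$) and that $\gamma_0$ stays in $[0,\alpha)$; once that is in place, the inclusion arguments are purely formal and identical to the scalar case.
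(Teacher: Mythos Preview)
Your proposal is correct and follows essentially the same approach as the paper: the paper does not give a separate proof for this proposition but treats it as the cooperative-systems analogue of the scalar proposition in Subsection~\ref{subssec-comparison-scalar-odes}, whose proof is exactly the two-inclusion argument you describe (radius bound $\Rightarrow \widehat{\mathcal{A}}_\gamma\in\mathcal{D}_{\gamma_0}$, invariance plus compactness for one inclusion; $\mathcal{D}_{\gamma_0}\subset\mathcal{D}_\gamma$ plus minimality for the other). Your identification of the only substantive change---the linear bound on $R$ replacing the bound on $R^2$, which shifts $\gamma_0$ to $(1+\lambda)\delta$ and the range to $(\gamma_0,\alpha)$---matches the remark following Theorem~\ref{th-existence-pullback-attractors-line-cooperative}.
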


\section{Applications in nonautonomous parabolic PDEs}
\label{sec-application-pdes}
\par In this section we consider parabolic partial differential equation. We first present a general setting for
reaction-diffusion equation, with Dirichlet, Neumann or Robin boundary conditions. 
Then we study asymptotic behavior for these equations via comparison results, using the same approach of Section \ref{sec-app-odes} for ODEs.
Then, we study existence of nonuniform exponential dichotomies using the examples of Subsection \ref{subsec-examples_NEDII}.  Finally, consider the adjoint problem to study the relation between NEDI and NEDII. 
\vspace{0.3cm}
\par  Let us consider a scalar parabolic PDE
\begin{equation}
\left\{ 
\begin{array}{l l} 
v_t=\Delta v+ f(t,x,v),& \ t>s, \  x\in \Omega,\\
v(s)=v_0,&  \ x\in \Omega,\\
B(v):=\alpha(x)v+\kappa\frac{\partial v}{\partial n}=0,& \  t\geq s, \ x\in \partial \Omega. 
\end{array} 
\right.
\end{equation}
where $\Omega$ is a open, bounded, and connected subset of $\mathbb{R}^N$ ($N\geq 1$) with a smooth boundary $\partial \Omega$; $\Delta$ is the Laplacian operator; 
$\partial/\partial n$ denotes the outward normal derivative at the boundary; 
 and $f$ satisfies the following conditions:
\par \textbf{(H)} $f:\mathbb{R}\times \overline{\Omega}\times \mathbb{R}\to \mathbb{R}$ is continuous; and the mapping 
$\mathbb{R}\ni v\mapsto f(t,x,v)$ is Lipschitz in bounded sets uniformly for $x\in \overline{\Omega}$.
\par The problem has Dirichlet boundary conditions if $\kappa=0$ and $\alpha(x)\equiv1$; Neumann boundary conditions if $\kappa=1$ and $\alpha\equiv 0$; and Robin boundary conditions if
$\kappa=1$ and $\alpha:\partial \Omega\to \mathbb{R}$ is a nonnegative sufficiently regular function.
\par \textbf{The case of Neumann or Robin boundary conditions:} we consider the Banach space $Y:=C(\overline{\Omega})$ of continuous functions on $\overline{\Omega}$ with the sup-norm $\|\cdot\|_Y$. Let $A$ be the closure of the differential operator $A_0:D(A_0)\subset Y\to Y$, $A_0u=\Delta u$, defined on 
\begin{equation*}
D(A_0):=\{u\in C^2(\Omega)\cap C^1(\overline{\Omega}) : \, A_0u\in C(\overline{\Omega}), \ Bu=0, \hbox{ on }\partial \Omega  \}.
\end{equation*}
\par \textbf{The case of Dirichlet boundary conditions:} we consider the Banach space $C_0(\overline{\Omega})$ of the continuous maps vanishing on the boundary $\partial \Omega$ with the sup-norm $\|\cdot\|_Y$, trying to keep a common notation for all the types of boundary conditions. 
Consider the operator
$A_0:D(A_0)\subset Y\to Y$, $A_0u=\Delta u$, defined on 
\begin{equation*}
D(A_0):=\{u\in C^2(\Omega)\cap C_0(\overline{\Omega}) : \, A_0u\in C_0(\overline{\Omega})\} 
\end{equation*}
which is closable in $C_0(\overline{\Omega})$, and let $A$ be its closure. 

\smallskip

\par Thus, for all types of boundary conditions describe above, $A$ generates an analytic semigroup $\{e^{At}: t\geq 0\}$ on $Y$. Moreover, $e^{At}$ is a compact operator for each $t>0$.
\par Define $f^e:\mathbb{R}\times Y\to Y$, $(t,u)\mapsto f^e(t,u)$, 
$f^e(t,u)(x):=f(t,x,u(x))$, for each $x\in \overline{\Omega}$, so the regularity conditions
\textbf{(H)} on $f$ are transfer to $f^e$. 
Then we obtain an abstract Cauchy problem in $X$
\begin{equation}\label{th-abstract-Cauchy-problem}
\left\{ 
\begin{array}{l l} 
\dot{u}=A u+ f^e(t,u),& \ t>s,\\
u(s)=u_0.&
\end{array} 
\right.
\end{equation}
Hence Problem \eqref{th-abstract-Cauchy-problem} has a unique \textit{mild solution}, i.e., for each $[s,u_0)\in \mathbb{R} \times Y$ there exists a unique
map $u(t)=u(t,s;u_0)$ defined on a maximal interval $[s,\sigma)$, for some $\sigma(s,u_0)>s$ (possibly $+\infty$) which satisfies the integral equation 
\begin{equation*}
u(t)=e^{A(t-s)}u_0+\int_{s}^{t} e^{A(t-\tau)} f^e(\tau,u(\tau))\, d\tau, \ t\in [s,\sigma).
\end{equation*}
Moreover, $u(\cdot,s;u_0):(s,\sigma(s,u_0))\to Y$ is continuous, and $u(t)\to u_0$, as $t\to s^+$ if and only if $u_0\in \overline{D(A)}$, see Lorenzi \textit{et al.} \cite{Lorenzi-Lunardi-Metafune-Pallara}.
\par Now, define $X:=\overline{D(A)}\subset Y$, which is a Banach space with the induced norm of $Y$. For Neumann or Robin boundary conditions the phase space is $X=C(\overline{\Omega})=Y$, and for Dirichlet boundary conditions we choose $X=C_0(\overline{\Omega})$. Then, in both situations, the mild solution $u(\cdot,s;u_0):[s,\sigma(s,u_0))\to X$ is continuous.
\par Moreover, if a mild solution remains bounded, then it is defined for every 
$t\in [s,+\infty)$. 
Then, the mild solution $u$ generates a nonlinear evolution process 
$\mathcal{S}_f:=\{S_f(t,s): t\geq s\}$ over $X$, defined by $S_f(t,s)u_0=u(t,s;u_0)$.
Additionally, $\{S_f(t,s): t> s\}$ is a family of compact operators over $X$, see
Travis and Webb \cite{Travis-Webb}.

\subsection{Asymptotic behavior for parabolic PDEs}
\label{subssec-comparison-scalar-pdes}
\par Following the ideas of Section \ref{sec-app-odes}, we study pullback and forward dynamics for parabolic partial differential equations using nonuniform exponential dichotomies and comparison methods.

\par First, we recall some basic notions of monotone semi-flows. 
\begin{definition}
	We say that $X$ is a \textbf{ordered Banach space} if there is a closed convex \textit{cone of 
		nonnegative vectors} $X_+$ where an order relation on $X$ is defined by
	\begin{eqnarray}
	u\leq v &\Leftrightarrow& u-v\in X_+;\\
	u<v  &\Leftrightarrow&  u-v\in X_+ \hbox{ and } u\neq v.
	\end{eqnarray}
\end{definition}

\par Next, we need a result that allow us to compare mild solutions. 
\begin{theorem}
	\label{th-comparison-parabolic}
	Suppose that $f_1$ and $f_2$ satisfy hypothesis \textbf{(H)} and such that $f_1\leq f_2$. 
	For each $(s,u_0)\in \mathbb{R}\times X$, denote $u_1(t,s,u_0)$ and $u_2(t,s,u_0)$ 
	the mild solutions of the associated abstract Cauchy problems, respectively.
	Then,
	$u_1(t,s;u_0)\leq u_2(t,s;u_0)$ for every $t\geq s$, where both solutions are defined.
\end{theorem}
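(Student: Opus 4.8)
The plan is to reduce the comparison to two facts: the linear semigroup $\{e^{At}:t\ge 0\}$ is \emph{positive} on the ordered Banach space $X$, i.e. $e^{At}X_+\subset X_+$ for every $t\ge 0$, and the mild solutions can be obtained by a \emph{monotone} Picard iteration. The positivity of $e^{At}$ is the only place where the particular boundary operator $B$ enters: for each of the three admissible choices (Dirichlet, Neumann, Robin) it is precisely the classical maximum principle for the heat equation, equivalently the nonnegativity of the associated heat kernel, and I take it as known.

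Since the inequality is to be proved only on a compact subinterval $[s,T]$ on which both solutions are defined, $u_1$ and $u_2$ are bounded there, say $\|u_i(t)\|_Y\le\rho$ for $t\in[s,T]$, $i=1,2$. By \textbf{(H)} and a compactness argument on $[s,T]\times\overline\Omega\times[-\rho',\rho']$ with $\rho':=\rho+1$, there is $L>0$ bounding the Lipschitz constants of $f_1,f_2$ in the last variable on that set; fix $\lambda\ge L$, so that $v\mapsto f_i(t,x,v)+\lambda v$ is nondecreasing on $[-\rho',\rho']$ for all $(t,x)\in[s,T]\times\overline\Omega$ and $i=1,2$. Put $\tilde f_i^e(t,u):=f_i^e(t,u)+\lambda u$ and $A_\lambda:=A-\lambda I$, so $e^{A_\lambda t}=e^{-\lambda t}e^{At}$ is again a positive semigroup, $f_1\le f_2$ gives $\tilde f_1^e(t,u)\le\tilde f_2^e(t,u)$ for all $u$, and $u\mapsto\tilde f_i^e(t,u)$ is order preserving on the ball of radius $\rho'$. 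Each $u_i$ is then the mild solution of $\dot u=A_\lambda u+\tilde f_i^e(t,u)$, i.e.
\[
u_i(t)=e^{A_\lambda(t-s)}u_0+\int_s^t e^{A_\lambda(t-\sigma)}\,\tilde f_i^e\big(\sigma,u_i(\sigma)\big)\,d\sigma .
\]

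Next I would run, on a small interval $[s,s+\tau]$ with $\tau=\tau(L)$ chosen so that the scheme is a contraction and its iterates stay in $\overline{B(0,\rho')}$ (this is the standard local existence argument), the Picard iteration $u_i^0(t)\equiv e^{A_\lambda(t-s)}u_0$,
\[
u_i^{n+1}(t)=e^{A_\lambda(t-s)}u_0+\int_s^t e^{A_\lambda(t-\sigma)}\,\tilde f_i^e\big(\sigma,u_i^n(\sigma)\big)\,d\sigma ,
\]
which converges uniformly to $u_i$ on $[s,s+\tau]$. Induction on $n$: $u_1^0=u_2^0$, and if $u_1^n(t)\le u_2^n(t)$ on $[s,s+\tau]$ then, since $\tilde f_1^e\le\tilde f_2^e$ and $\tilde f_2^e(\sigma,\cdot)$ is order preserving, $\tilde f_1^e(\sigma,u_1^n(\sigma))\le\tilde f_2^e(\sigma,u_2^n(\sigma))$, whence positivity of $e^{A_\lambda(t-\sigma)}$ yields $u_1^{n+1}(t)\le u_2^{n+1}(t)$. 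As $X_+$ is closed, letting $n\to\infty$ gives $u_1\le u_2$ on $[s,s+\tau]$. Exactly the same reasoning, started from $u_1^0\equiv e^{A_\lambda(\cdot-s)}v_0\le e^{A_\lambda(\cdot-s)}w_0\equiv u_2^0$, proves the slightly more general local statement: if $v_0\le w_0$ with $\|v_0\|,\|w_0\|\le\rho$ and $f_1\le f_2$, then $u_1(\cdot,s;v_0)\le u_2(\cdot,s;w_0)$ on $[s,s+\tau]$.

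Finally, to cover all of $[s,T]$, I would pick a partition $s=t_0<t_1<\dots<t_m=T$ of mesh at most $\tau$ — a uniform $\tau$ and $\lambda$ are available because $\|u_i\|_Y\le\rho$ throughout $[s,T]$ — and argue by induction on $k$: assuming $u_1(t_k,s;u_0)\le u_2(t_k,s;u_0)$, apply the local statement on $[t_k,t_{k+1}]$ with initial data $u_i(t_k,s;u_0)$ and use the evolution property together with uniqueness of mild solutions, $u_i(t,s;u_0)=u_i(t,t_k;u_i(t_k,s;u_0))$, to obtain $u_1\le u_2$ on $[t_k,t_{k+1}]$. The delicate point is not the iteration itself but carrying it out entirely at the level of mild solutions: one cannot differentiate $u_2-u_1$ and invoke the PDE maximum principle directly, so the argument must go through the positivity of $\{e^{A_\lambda t}\}$, the closedness of the cone $X_+$, and the uniform-in-$[s,T]$ choice of the time step $\tau$ and the shift $\lambda$ — all resting on the a priori bound $\|u_i(t)\|_Y\le\rho$.
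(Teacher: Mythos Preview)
Your argument is correct and is, in fact, the standard route to comparison at the mild-solution level: the $\lambda$-shift to render the nonlinearity order preserving on the relevant ball, positivity of $e^{A_\lambda t}=e^{-\lambda t}e^{At}$ inherited from the maximum principle for each boundary condition, an order-preserving Picard iteration on short intervals, and a uniform-step continuation across $[s,T]$ using the a priori bound. The only detail worth double-checking is that your choice of $\tau$ guarantees the iterates remain in $\overline{B(0,\rho')}$ at \emph{every} step of the partition (not just on $[s,s+\tau]$); this is fine because the bound $\|u_i(t)\|_Y\le\rho$ holds on all of $[s,T]$ and the contraction/invariance estimate depends only on $\rho$, $L$ and the semigroup constants, so the same $\tau$ works on each subinterval---you state this, just make sure it is explicit when you write the proof out.

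The paper itself does not prove this theorem: it simply refers to \cite[Theorem 3.1]{Caraballo-Langa-Obaya-Sanz}. Your self-contained argument is exactly the kind of proof one finds behind such citations, so there is nothing to compare in terms of strategy.
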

\par For the proof of Theorem \ref{th-comparison-parabolic} see Caraballo \textit{et al.} \cite[Theorem 3.1]{Caraballo-Langa-Obaya-Sanz}.

\medskip

\par Now we proceed ad the previous sections, first we consider a linear problem that we are going to compare Problem \eqref{th-abstract-Cauchy-problem}.
\par Let $a,b: \mathbb{R}\times\overline{\Omega}\to \mathbb{R}$ be continuous functions. We consider a non-homogeneous PDE
\begin{equation}\label{eq-applications-nonhomogenous-parabolic}
\left\{ 
\begin{array}{l l} 
u_t=\Delta u+ a(t,x) u +b(t,x),& \ t>s, \  x\in \Omega\\
u(s)=u_0,&  \ x\in \Omega\\
B(u)=0,& \ 
\end{array} 
\right.
\end{equation}
\par Define $g(t,x,u):=a(t,x)u+b(t,x)$. Then \eqref{eq-applications-nonhomogenous-parabolic} generates an evolution process 
$\mathcal{S}_g:=\{S_g(t,s): t\geq s\}$. 
 If $b=0$, then \eqref{eq-applications-nonhomogenous-parabolic} is associated with an
linear evolution process $\mathcal{S}_a:=\{S_a(t,s): t\geq s\}$.  
\par It is known that, if $a,b$ are locally Lipshitz on $t$, then
the evolution process $\mathcal{S}_g$ is given by the formula of variation of constants in terms of $\mathcal{S}_a$ and $b^e$,
see Henry \cite[Theorem 7.1.4]{Henry-1}.
This fact also holds true under weaker conditions over $a$ and $b$, as we show in the following lemma. The proof follows the same line of arguments of \cite[pp. 224-225]{Carvalho-Langa-Robison-book}.
\begin{lemma}\label{Lemma-constant-variation-mild-solutions}
	Suppose that $a,b$ are continuous. Then
	the evolution process $\mathcal{S}_g$ associate to \eqref{eq-applications-nonhomogenous-parabolic} is given by
	\begin{equation}\label{eq-nonautonomous-variation-constant-formula}
	S_g(t,s)=S_a(t,s)+\int_{s}^{t}S_a(t,r)b^e(r)dr, \ t\geq s.
	\end{equation}	
\end{lemma}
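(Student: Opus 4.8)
The plan is to show that the right-hand side of \eqref{eq-nonautonomous-variation-constant-formula} defines an evolution process whose associated "mild solution" coincides, by uniqueness, with the mild solution $u(t,s;u_0)=S_g(t,s)u_0$ of \eqref{eq-applications-nonhomogenous-parabolic}. First I would fix $(s,u_0)\in\mathbb{R}\times X$ and set $w(t):=S_a(t,s)u_0+\int_s^t S_a(t,r)b^e(r)\,dr$ for $t\ge s$; here $b^e(r)\in X$ is the Nemytskii realization $b^e(r)(x)=b(r,x)$, which is continuous in $r$ by continuity of $b$ on $\mathbb{R}\times\overline\Omega$. Since $\{e^{At}:t\ge 0\}$ is analytic (hence strongly continuous) and $\mathcal{S}_a$ is the linear evolution process of $\dot u=Au+a(t,x)u$, the map $(t,r)\mapsto S_a(t,r)$ is strongly continuous on $\{t\ge r\}$, so the integral defining $w$ converges in $X$ and $w$ is continuous on $[s,\infty)$ with $w(s)=u_0$. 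The key observation is that $\mathcal{S}_a$ itself satisfies the variation-of-constants formula relative to the semigroup $e^{At}$: by definition of mild solution for the linear problem with source $a(t,x)u$,
\begin{equation*}
S_a(t,s)v=e^{A(t-s)}v+\int_s^t e^{A(t-\tau)}\big(a(\tau,\cdot)\,S_a(\tau,s)v\big)\,d\tau,\qquad t\ge s.
\end{equation*}

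Next I would substitute this identity into the definition of $w$ and interchange the order of integration (Fubini, justified by strong continuity of the integrands and local boundedness on compact time intervals) to rewrite
\begin{equation*}
w(t)=e^{A(t-s)}u_0+\int_s^t e^{A(t-\tau)}\Big(a(\tau,\cdot)w(\tau)+b^e(\tau)\Big)\,d\tau.
\end{equation*}
This is exactly the integral equation characterizing the mild solution of \eqref{eq-applications-nonhomogenous-parabolic} with data $(s,u_0)$, i.e.\ of the Cauchy problem with nonlinearity $g(t,x,u)=a(t,x)u+b(t,x)$ (whose Nemytskii map $g^e$ is Lipschitz on bounded sets uniformly in $x$, so \textbf{(H)} holds and the mild solution is unique). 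By the uniqueness part of the existence theory for \eqref{th-abstract-Cauchy-problem} recalled above, $w(t)=S_g(t,s)u_0$ for all $t$ where the solution is defined; since the right-hand side of \eqref{eq-nonautonomous-variation-constant-formula} is globally defined, this also shows $S_g(t,s)u_0$ is globally defined. As $u_0\in X$ was arbitrary this gives the operator identity \eqref{eq-nonautonomous-variation-constant-formula}.

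The main obstacle is the Fubini interchange together with verifying that $\mathcal{S}_a$ obeys the variation-of-constants formula against $e^{At}$ under only continuity (not local Lipschitz continuity) of $a$ in $t$: one must check that $\tau\mapsto a(\tau,\cdot)\in\mathcal{L}(X)$ is strongly continuous and locally bounded, so that all the iterated integrals over the triangle $\{s\le\tau\le r\le t\}$ are absolutely convergent in $X$ and the exchange is legitimate. This is precisely the point treated in \cite[pp.~224--225]{Carvalho-Langa-Robison-book}, and I would follow that argument, noting that the singularity of $e^{At}$ is only a strong-continuity issue here (there is no fractional-power smoothing needed since $b^e(\tau)\in X$ directly), so the estimates reduce to $\sup_{0\le t\le T}\|e^{At}\|_{\mathcal L(X)}<\infty$ and dominated convergence. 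The remaining verifications — continuity of $w$, the value $w(s)=u_0$, and that $w$ stays in $X=\overline{D(A)}$ — are routine consequences of strong continuity of the semigroup and of $\mathcal{S}_a$.
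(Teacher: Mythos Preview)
Your proposal is correct and follows essentially the same route as the paper. Both arguments define the candidate $w(t)=S_a(t,s)u_0+\int_s^t S_a(t,r)b^e(r)\,dr$, expand $S_a$ via its own mild-solution identity against $e^{At}$, and apply Fubini on the triangle $\{s\le r\le\tau\le t\}$. The only cosmetic difference is in the final step: the paper subtracts and obtains $v(t)-u(t)=\int_s^t e^{A(t-\tau)}a^e(\tau)\big(v(\tau)-u(\tau)\big)\,d\tau$ and then invokes Gr\"onwall directly, whereas you show $w$ satisfies the mild-solution integral equation for $g$ and invoke uniqueness; since uniqueness of mild solutions is itself proved by Gr\"onwall, the two conclusions are the same argument in different clothing. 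One small caution: your claim that ``$b^e(r)\in X$'' need not hold in the Dirichlet case, where $X=C_0(\overline\Omega)$ but $b$ need not vanish on $\partial\Omega$; the paper silently works in $Y=C(\overline\Omega)$ here and only addresses this point later, so be careful not to overstate it.
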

\begin{proof}
	\par Let $(s,u_0)\in \mathbb{R}\times X$ fixed and define 
	$u(t):=S_g(t,s)u_0$ the mild solution of \eqref{eq-applications-nonhomogenous-parabolic} and 
	\begin{equation*}
	v(t):=S_a(t,s)u_0+\int_{s}^{t}S_a(t,r)b^e(r)dr, \ \ t\geq s.
	\end{equation*}	
	We will show that $v(t)=u(t)$, for every $t\geq s$. 
	Indeed, note that
	\begin{equation*}
	\begin{split}
	v(t)-u(t)=&\int_{s}^{t}e^{A(t-r)}a^e(r)S_a(r,s)u_0dr+\int_{s}^tS_a(t,r)b^e(r)dr\\
	&-\int_{s}^{t}e^{A(t-r)}[a^e(r)u(r)+b^e(r)]dr\\
	=&  \int_{s}^{t}e^{A(t-r)}a^e(r)S_a(r,s)u_0dr-\int_{s}^{t}e^{A(t-r)}a^e(r)u(r)dr\\
	&+\int_{s}^{t}\int_{r}^{t}e^{A(t-\tau)}a^e(\tau)S_a(\tau,r)b^e(r) d\tau dr.
	\end{split}
	\end{equation*}
	Now, by Fubini's Theorem 
	\begin{equation*}
	\int_{s}^{t}\int_{r}^{t}e^{A(t-\tau)}a^e(\tau)S_a(\tau,r)b^e(r) d\tau dr
	=\int_{s}^{t}e^{A(t-\tau)}a^e(\tau)\int_{s}^{\tau}S_a(\tau,r)b^e(r) dr d\tau.
	\end{equation*}
	Therefore,
	\begin{equation*}
	v(t)-u(t)=\int_{s}^{t}e^{A(t-\tau)}a^e(\tau)(v(\tau)-u(\tau))d\tau.
	\end{equation*}
	By Gr\" ownwall's inequality, we conclude that $v(t)=u(t)$ for every $t\geq s$, and the proof is complete.
\end{proof}

\par Now, we are in position to apply the same approach provided for nonautonomous ODEs, in Section \ref{sec-app-odes}, to parabolic nonautonomous PDEs. 
\smallskip
In this Subsection we assume that:
\par \textbf{(H1)} $f(t,x,0)\geq 0$, for all $(t,x)$.
\par \textbf{(H2)} there exist $a,b:\mathbb{R}\times\overline{\Omega}\to \mathbb{R}$ continuous functions such that $f(t,x,u)\leq a(t,x)u+b(t,x)$, $u\geq 0$.

%
\smallskip
\par Conditions \textbf{(H1)} and \textbf{(H2)} over $f$ imply that
for each $(s,u_0)\in \mathbb{R}\times X$ the mild solution $u(t,s;u_0)$ of 
\eqref{th-abstract-Cauchy-problem} is defined for every $t\geq s$. 
Furthermore, by Theorem \ref{th-comparison-parabolic}, the associated evolution process $\mathcal{S}_f$ satisfies
\begin{equation}\label{eq-fundamental-comparison-parabalic}
	0\leq S_f(t,s)u_0\leq S_g(t,s)u_0, \  u_0\in X_+.
\end{equation}

\par Now, we study the asymptotic behavior of the evolution process $\mathcal{S}_f:=\{S_f(t,s):t\geq s\}$ over the metric space $(X_+,d)$, where $d(u,v)=\|u-v\|_X$, for $u,v\in X_+$. 
Finally, we write
$B^+[0,R]:=X_+\cap B[0,R]$, for any $R>0$.

\par From \textbf{(H1)} and \textbf{(H2)}, we have $b(t,x)\geq 0$, $(t,x)\in \mathbb{R}\times X$, so $b^e$ is a function that take values in $Y=C(\overline{\Omega})$, not necessarily in $C_0(\overline{\Omega})$. Thus, we define the family of spaces for non-homogeneous $b^e$ as
\begin{equation*}
C_\eta(\mathbb{R},Y):=\{b:J\to Y: \sup_{r\in \mathbb{J}}e^{-\eta|r|}\|b(r)\|_Y \}, \hbox{ for each } \eta\in \mathbb{R}.
\end{equation*}
\par For Neumann or Robin boundary conditions $X=\overline{D(A)}=Y$. Thus, to fix the notation, we use the space defined above for any boundary condition, Dirichlet, Neumann or Robin.

\begin{theorem}[Existence of Pullback
	$\mathcal{D}_\gamma$-attractor]
	\label{th-existence-pullback-attractors-line-parabolic}
	Let $f:\mathbb{R}\times \overline{\Omega}\times \mathbb{R}\to \mathbb{R}$ satisfies 
	\textbf{(H), (H1)} and \textbf{(H2)} and 
	$\mathcal{S}_f$ be the evolution process induced by 
	\eqref{th-abstract-Cauchy-problem} and $\mathcal{D}_\gamma$ the universe defined on 
	Example \ref{def-example-universe}. 
	\par Assume that $\mathcal{S}_a$ admits a NEDII on $\mathbb{R}^-$ with $X^s_{II}(\alpha,\delta)$ and $\Pi^u=0$ and NEDII on $\mathbb{R}^+$ with $X^s_{II}(\beta,\nu)$ and $\Pi^u_+(t)=0=\Pi^u_-(s)$, $t\geq 0\geq s$.
	\par Suppose that $\lambda<\alpha/\delta$, then
	for 
	$b:\mathbb{R}\times \overline{\Omega}\to \mathbb{R}$ such that
	$b^e_-:=b^e|_{\mathbb{R}^-}\in C_{\lambda\delta}(\mathbb{R}^-,Y)$
	and
	$b^e_+:=b^e|_{\mathbb{R}^+}\in C_{\eta\nu}(\mathbb{R}^+,Y)$,
	there is a pullback $\mathcal{D}_\gamma$-attractor 
	$\widehat{\mathcal{A}}=\{\mathcal{A}_\gamma(t):t\in \mathbb{R}\}$
	for
	$\mathcal{S}_f$, for every $\gamma\in [0,\alpha)$.
	\par Moreover,
	$\mathcal{A}_\gamma(t)\subset B^+[0,R(t)]$, 
	for each $t\in \mathbb{R}$, for some $R:\mathbb{R}\to \mathbb{R}$ satisfying
	$R|_{\mathbb{R}^-}\in C_{(1+\lambda)\delta}(\mathbb{R}^-)$ and
	\begin{itemize}
		\item If $\eta>-\beta/\nu$, then $R|_{\mathbb{R}^-_*}\in C_{(1+\eta)\nu}(\mathbb{R}^-_*)$.
		\item If $\eta<-\beta/\nu$ then $R|_{\mathbb{R}^-_*}\in C_{(\nu-\beta)}(\mathbb{R}^-_*)$.
		\item If $\eta=-\beta\nu$, then 
		$R|_{\mathbb{R}^-_*}\in C_{(\nu-\beta)+\epsilon}(\mathbb{R}^-_*)$, for every $\epsilon>0$.
	\end{itemize} 
\end{theorem}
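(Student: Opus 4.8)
The plan is to reduce the nonlinear parabolic problem to the linear non-homogeneous one by comparison, then repeat the admissibility estimates of Section \ref{sec-app-odes} almost verbatim; the only genuinely new ingredient is that in an infinite-dimensional phase space a bounded pullback-absorbing family need not be compact.

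First I would exploit the hypotheses. By \textbf{(H1)} the null nonlinearity lies below $f$ on $X_+$, and by \textbf{(H2)} one has $f\le g$ with $g(t,x,u)=a(t,x)u+b(t,x)$, so Theorem \ref{th-comparison-parabolic} gives, for every $u_0\in X_+$, the squeeze $0\le S_f(t,s)u_0\le S_g(t,s)u_0$, i.e.\ \eqref{eq-fundamental-comparison-parabalic}; in particular the mild solution stays bounded and hence is global. Combining this with the monotonicity of the sup-norm on the ordered Banach space $X$, the variation of constants formula \eqref{eq-nonautonomous-variation-constant-formula} of Lemma \ref{Lemma-constant-variation-mild-solutions}, and positivity of $\mathcal S_a$ (parabolic maximum principle, so that $\int_s^t S_a(t,r)b^e(r)\,dr\ge 0$), I obtain the scalar inequality
$$\|S_f(t,s)u_0\|_X\le \|S_a(t,s)u_0\|_X+\int_s^t\|S_a(t,r)b^e(r)\|_X\,dr,\qquad t\ge s,$$
with both summands nonnegative. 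One has to observe here that for $t>r$ the operator $e^{A(t-r)}$, hence $S_a(t,r)$, maps $Y=C(\overline\Omega)$ boundedly into $X=\overline{D(A)}$, so the $Y$-valued forcing $b^e(r)$ is harmless and the NEDII bounds (stated on $X$) apply to $S_a(t,r)b^e(r)$.

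Next I would run exactly the estimates of Theorems \ref{th-existence-pullback-attractors-semi-line}, \ref{th-F-admissilibity} and \ref{th-existence-pullback-attractors-line}. For $s\le t\le 0$, the NEDII on $\mathbb R^-$ with $X^s_{II}(\alpha,\delta)$ and $\Pi^u=0$, together with $b^e_-\in C_{\lambda\delta}(\mathbb R^-,Y)$ and $\lambda<\alpha/\delta$, yields $\int_s^t\|S_a(t,r)b^e(r)\|_X\,dr\le \frac{M}{\alpha-\delta\lambda}\|b^e_-\|_{\lambda\delta}\,e^{(1+\lambda)\delta|t|}$ and $\|S_a(t,s)u_0\|_X\le CM\,e^{(\alpha+\delta)|t|}e^{(\alpha-\gamma)s}$ for $\widehat D\in\mathcal D_\gamma$ with $\gamma\in[0,\alpha)$ (no squares appear, which is why the range is $[0,\alpha)$ rather than $[0,\alpha/2)$). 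For $s\le 0\le t$ one splits $S_a(t,s)=S_a(t,0)S_a(0,s)$ and the integral at $r=0$, so the NEDII on $\mathbb R^+$ with $X^s_{II}(\beta,\nu)$ and $b^e_+\in C_{\eta\nu}(\mathbb R^+,Y)$ produces the three cases $\eta\gtreqless -\beta/\nu$ with growth orders $e^{(1+\eta)\nu|t|}$, $e^{(\nu-\beta)|t|}$, $e^{((\nu-\beta)+\epsilon)|t|}$ exactly as in Theorem \ref{th-F-admissilibity}. Altogether one reaches an inequality of the shape \eqref{eq-existence-pullbackattractor-line}, from which $\{J(t):=B^+[0,\widetilde R(t)]\}$, with $\widetilde R$ of the asserted orders, is a bounded family that pullback $\mathcal D_\gamma$-absorbs.

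Finally, compactness. Since $\{S_f(t,s):t>s\}$ is a family of compact operators (Travis–Webb), I would set $K(t):=\overline{S_f(t,t-1)J(t-1)}$, which is compact, and still pullback $\mathcal D_\gamma$-absorbs: given $\widehat D\in\mathcal D_\gamma$ and $t$, pick $s_0\le t-1$ with $S_f(t-1,s)D(s)\subset J(t-1)$ for $s\le s_0$, whence $S_f(t,s)D(s)\subset K(t)$ for such $s$. Theorem \ref{th-characterization-existence-pullback-attractors} then gives the pullback $\mathcal D_\gamma$-attractor $\widehat{\mathcal A}_\gamma$ for every $\gamma\in[0,\alpha)$. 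The family of closed sets $\{B^+[0,R(t)]\}$, with $R$ the radius obtained from $\widetilde R$ by discarding the additive constants (same growth orders), pullback $\mathcal D_\gamma$-attracts, so minimality of $\widehat{\mathcal A}_\gamma$ among closed pullback $\mathcal D_\gamma$-attracting families yields $\mathcal A_\gamma(t)\subset B^+[0,R(t)]$ with $R$ as stated. The main obstacle I anticipate is precisely this last step: replacing the trivial compactness of balls used in the ODE proofs by the smoothing of the evolution process, while keeping the $Y$-valued forcing compatible with the NEDII formulated on $X$.
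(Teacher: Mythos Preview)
Your proposal is correct and follows essentially the same route as the paper: comparison via \eqref{eq-fundamental-comparison-parabalic} and Lemma \ref{Lemma-constant-variation-mild-solutions}, the admissibility estimates of Theorems \ref{th-existence-pullback-attractors-semi-line}--\ref{th-existence-pullback-attractors-line}, and compactification by $K(t)=S_f(t,t-1)B^+[0,R(t-1)]$ before invoking Theorem \ref{th-characterization-existence-pullback-attractors}. The only point the paper treats more carefully is the one you flag at the end: in the Dirichlet case $X=C_0(\overline\Omega)\subsetneq Y$, the paper splits into Case I ($X=Y$, trivial) and Case II, where it shows explicitly, via a short-time bound on $\|S_a(s+\epsilon,s)v\|_Y$ and the factorization $S_a(t,r)=\lim_{\epsilon\to 0}S_a(t,r+\epsilon)S_a(r+\epsilon,r)$, that $\|S_a(t,r)b^e(r)\|_X\le CMe^{\delta|t|}e^{-\alpha(t-r)}\|b^e(r)\|_Y$; your sentence ``the NEDII bounds (stated on $X$) apply to $S_a(t,r)b^e(r)$'' is precisely this claim, and the paper's argument is what fills it in.
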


\begin{proof}
	Let 
	$b:\mathbb{R}\times \Omega\to \mathbb{R}$ be a continuous function satisfying hypotheses above, 
	$\widehat{D}=\{D(t): t\in \mathbb{R}\}\in \mathcal{D}_\gamma$, and $u_s\in D(s)$, for each $\gamma\in [0,\alpha)$ and $s\leq t\leq 0$ fixed. 
	From \eqref{eq-fundamental-comparison-parabalic} and Lemma
	\ref{Lemma-constant-variation-mild-solutions},
	\begin{eqnarray*}
		\|S_f(t,s)u_s\|_X&\leq& \|S_g(t,s)u_s\|_X\\
		&=& \|S_a(t,s)u_s\|_X+\int_{s}^{t}\|S_a(t,r)b^e(r)\|_Xdr.
	\end{eqnarray*}
	At this point, to deal with the integral above, we split the argument in two cases.
	\par \textbf{Case I:} If $X=Y$, then 
	\begin{equation}\label{eq-caseI}
	\|S_a(t,s)b^e(s)\|_X\leq \|S_a(t,s)\|_{\mathcal{L}(X)}\|b^e(s)\|_X, \ \ t\geq s.
	\end{equation}
	\par \textbf{Case II:} If $X=C_0(\overline{\Omega})$. Note that,
	there exists $C>0$ such that
	\begin{equation}\label{eq-boundedness-between-X-and-Y}
	\|S_a(t,r)b^e(r)\|_X\leq CMe^{\delta|t|}e^{-\alpha(t-r)}\|b^e(r)\|_Y, \ \ t\geq r,
	\end{equation}
	Indeed, for $v\in Y$ and $\epsilon>0$ small,
	\begin{equation*}
	\begin{split}
	\|S_a(s+\epsilon,s)v\|_{Y}\leq &\, \|e^{A \epsilon} v\|_{Y} +
	\int_{s}^{s+\epsilon} \|e^{A(s+\epsilon-r)}a^e(r) S_a(r,s)v\|_{Y} dr\\
	\leq &\, Ce^{\omega\epsilon}\|v\|_Y+ Ce^{\omega\epsilon}\sup_{s\leq r\leq s+1}\{\|a^e(r)\|_Y\} \int_{s}^{s+\epsilon}\|S_a(r,s)v\|_{Y} dr,
	\end{split}
	\end{equation*}
	where in the last inequality was used that 
	$\{e^{At}: t\geq 0\}$ is a semigroup on $Y$, and hence, there are $C,\omega>0$ such that $\|e^{At}\|_{\mathcal{L}(Y)}\leq Ce^{\omega t}$, $t\geq0$.
	Then 
	\begin{equation*}
	\limsup_{\epsilon\to 0}\|\|S(r+\epsilon,r)b(r)\|_X\leq C \|b(r)\|_Y.
	\end{equation*}
	\par Thus, \eqref{eq-boundedness-between-X-and-Y} will follows from
	\begin{equation}
	\|S(t,r)b(r)\|_X\leq \limsup_{\epsilon\to 0} \|\|S(t,r+\epsilon)\|_{\mathcal{L}(X)}
	\limsup_{\epsilon\to 0}\|\|S(r+\epsilon,r)b(r)\|_X.
	\end{equation}
	\par Now, following similar ideas of the proof of Theorem \ref{th-existence-pullback-attractors-line}, 
	we show that exists $R$ satisfying the properties of the statement such that
	the family of bounded sets
	\begin{equation*}
	\widehat{B}:=\{B^+[0,R(t)]: t\in \mathbb{R}\}, 
	\end{equation*}
	pullback absorbs $\widehat{D}$, i.e., for each $t$ fixed, there exist $s_0=s_0(t,\widehat{D})\leq t$ such that 
	\begin{equation*}
	S_f(t,s)D(s) \subset B^+[0,R(t)], \hbox{ for every } s \geq s_0.
	\end{equation*}
	\par Since $\{S_f(t,s):t>s\}$ is a family of compact operators, the sets
	$K(t):=S(t,t-1)B^+[0,R(t-1)]$, define a family of precompact sets 
	$\widehat{K}:=\{K(t): t\in \mathbb{R}\}$ that pullback absorbs every element of $\mathcal{D}_\gamma$, for each $\gamma\in (0,\alpha)$.
	\par Therefore, by Theorem \ref{th-characterization-existence-pullback-attractors},
	there exists a pullback $\mathcal{D}_\gamma$-attractor $\widehat{\mathcal{A}}_\gamma$
	for the evolution process $\mathcal{S}_f$. Moreover, by the minimality property,
	$\widehat{\mathcal{A}}_\gamma\subset \widehat{B}$, and the proof is complete.
\end{proof}

\begin{remark}
	Every result of Subsection \ref{subssec-comparison-systems-odes} has a corresponding version for parabolic PDEs. 
\end{remark}

\par As a consequence of Theorem \ref{th-existence-pullback-attractors-line-parabolic}, we state the following result on pullback and forward attraction.

\begin{theorem}[Complete Asymptotic Analysis]
	\label{th-existence-pullback-attractors-line-verion-2-parabolic}
Let $f:\mathbb{R}\times \overline{\Omega}\times \mathbb{R}\to \mathbb{R}$ satisfies 
\textbf{(H), (H1)} and \textbf{(H2)} and 
$\mathcal{S}_f$ be the evolution process induced by 
\eqref{th-abstract-Cauchy-problem} and $\mathcal{D}_\gamma$ the universe defined on 
Example \ref{def-example-universe}.  
\par Suppose that $\mathcal{S}_a$ admits a NEDII on $\mathbb{R}$ with $X^s(\alpha,\delta)$ and $\Pi^u=0$. 
\par For $\lambda\in(-\alpha/\delta,\alpha/\delta)$ and $b\in C_{\lambda \delta}(\mathbb{R})$,
then, for every $\gamma\in [0,\alpha)$,
there exists a pullback $\mathcal{D}_\gamma$-attractor 
$\widehat{\mathcal{A}}_\gamma
=\{{\mathcal{A}}_\gamma(t): t\in \mathbb{R\}}$ for $\mathcal{S}_f$ such that
$\mathcal{A}_\gamma(t)\subset B^+[0,R(t)]$ for all $t\in \mathbb{R}$, for some
$R\in C_{(1+\lambda)\delta}(\mathbb{R})$. 
\par Additionally, if
$\alpha>\delta$ and $\lambda\in (-\alpha/\delta,-1]$, then
	\begin{itemize}
		\item  There exists a closed, bounded subset $B$ of $X$ which forward absorbs every bounded subset of $X$.
		\item There exists a family of compact subsets $\{K(t): t\in \mathbb{R}\}$ of $X$ that forward absorbs bounded subsets of $X$ and $\cup_{t\in \mathbb{R}}K(t)\subset B$.
		\item The pullback $\mathcal{D}_\gamma$-attractor $\widehat{\mathcal{A}}_\gamma$ is uniformly bounded, for every $\gamma\in [0,\alpha)$.
	\end{itemize}
\end{theorem}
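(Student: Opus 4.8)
The plan is to deduce this from the machinery already established: the pullback part is a direct specialisation of Theorem~\ref{th-existence-pullback-attractors-line-parabolic}, while the forward part is obtained from the comparison \eqref{eq-fundamental-comparison-parabalic}, the variation-of-constants formula of Lemma~\ref{Lemma-constant-variation-mild-solutions} and the NEDII bound for $\mathcal{S}_a$, in the spirit of Theorem~\ref{th-F-admissilibity}. For the pullback $\mathcal{D}_\gamma$-attractor I would restrict the NEDII of $\mathcal{S}_a$ on $\mathbb{R}$ to each half-line, so that Theorem~\ref{th-existence-pullback-attractors-line-parabolic} applies with $\beta=\alpha$, $\nu=\delta$ and, since $b^e$ restricted to each half-line lies in the corresponding $C_{\lambda\delta}$ space, with $\eta=\lambda$. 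The hypothesis $\lambda\in(-\alpha/\delta,\alpha/\delta)$ gives both $\lambda<\alpha/\delta$ and $\eta=\lambda>-\alpha/\delta=-\beta/\nu$, so the first alternative of that theorem holds and produces, for every $\gamma\in[0,\alpha)$, a pullback $\mathcal{D}_\gamma$-attractor $\widehat{\mathcal{A}}_\gamma$ with $\mathcal{A}_\gamma(t)\subset B^+[0,R(t)]$, the bound on $R$ being of order $e^{(1+\lambda)\delta|t|}$ on both half-lines; after replacing $R$ by a continuous majorant of the same exponential growth one gets $R\in C_{(1+\lambda)\delta}(\mathbb{R})$. When moreover $\lambda\le -1$ the exponent $(1+\lambda)\delta$ is $\le 0$, so $R$ is bounded and $\bigcup_{t\in\mathbb R}\mathcal{A}_\gamma(t)\subset B^+[0,\sup_tR(t)]$; this is the uniform boundedness assertion.

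For the closed, bounded forward absorbing set $B$ I would combine $\|S_f(t,s)u_0\|_X\le\|S_g(t,s)u_0\|_X$ for $u_0\in X_+$ (from \eqref{eq-fundamental-comparison-parabalic} together with monotonicity of the sup-norm on the cone) with Lemma~\ref{Lemma-constant-variation-mild-solutions} and the NEDII bound for $\mathcal{S}_a$, handling the cases $X=Y$ and $X=C_0(\overline{\Omega})$ exactly as in the proof of Theorem~\ref{th-existence-pullback-attractors-line-parabolic}. This yields, for $t\ge\tau$ and $u_0$ in a bounded set,
\[
\|S_f(t,\tau)u_0\|_X\le Me^{\delta|t|}e^{-\alpha(t-\tau)}\|u_0\|_X+CM\|b^e\|_{\lambda\delta}\,e^{\delta|t|}\int_\tau^{\,t} e^{-\alpha(t-r)}e^{\lambda\delta|r|}\,dr .
\]
Because $\alpha>\delta$ the first term tends to $0$ as $t\to+\infty$ for fixed $\tau$, and because $\lambda\in(-\alpha/\delta,-1]$ one has $\alpha+\lambda\delta>0$ and $(1+\lambda)\delta\le 0$, so splitting the integral at $0$ and integrating shows it is bounded by a constant $R_0'$ independent of $\tau$ and of the bounded set (for $\lambda=-1$ this constant is of order $\|b^e\|_{-\delta}/(\alpha-\delta)$, matching the ODE case). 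Hence for each $\tau$ there is a time after which $\|S_f(t,\tau)u_0\|_X\le R_0:=R_0'+1$, so $B:=B^+[0,R_0]$ is a closed, bounded set that forward absorbs every bounded subset of $X_+$.

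The genuinely delicate point is the compact forward absorbing family $\{K(t)\}$, since the nonuniform factor $e^{\delta|t|}$ makes a fixed-length window useless: $\bigcup_t\overline{S_f(t,t-1)B}$ need not be bounded. Using that $\{S_f(t,s):t>s\}$ consists of compact operators, I would let the look-back window grow linearly with $|t|$ to neutralise this factor: set $\sigma(t):=t-1-\tfrac{\delta}{\alpha}|t|$ and $K(t):=\overline{S_f(t,\sigma(t))B}$. Then $t-\sigma(t)=1+\tfrac{\delta}{\alpha}|t|\ge 1$, so each $K(t)$ is compact and $\|S_a(t,\sigma(t))\|_{\mathcal{L}(X)}\le Me^{\delta|t|}e^{-\alpha(t-\sigma(t))}=Me^{-\alpha}$; running the estimate of the previous paragraph with $\tau=\sigma(t)$ (again using $\alpha+\lambda\delta>0$ and $(1+\lambda)\delta\le 0$) shows the $b^e$-contribution is again uniformly bounded, whence $\bigcup_tK(t)\subset B^+[0,R_1]$ for some $R_1$. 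Since $\alpha>\delta$, $\sigma(t)=\tfrac{\alpha-\delta}{\alpha}t-1\to+\infty$ as $t\to+\infty$, and forward absorption of $\{K(t)\}$ follows in two steps: given a bounded $D_0$ and $\tau$, pick $t_1$ with $S_f(t,\tau)D_0\subset B$ for $t\ge t_1$, then $t_2$ with $\sigma(t)\ge\max\{t_1,\tau\}$ for $t\ge t_2$; for $t\ge t_2$ one has $S_f(t,\tau)D_0=S_f(t,\sigma(t))S_f(\sigma(t),\tau)D_0\subset K(t)$. Finally I would enlarge $B$ to $B^+[0,\max\{R_0,R_1\}]$ (still closed, bounded and forward absorbing, and now containing $\bigcup_tK(t)$), which completes the proof. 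The main obstacle is precisely this compensation: a fixed window, as suffices in the finite-dimensional or uniformly hyperbolic settings, produces an unbounded union, and it is only because $\alpha>\delta$ that the enlarged window $t-\sigma(t)\sim\tfrac{\delta}{\alpha}|t|$ is still compatible with $\sigma(t)\to+\infty$, which is needed for absorption.
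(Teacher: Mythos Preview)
Your proposal is correct, and in fact more detailed than the paper's own treatment: the paper merely states this theorem as a consequence of Theorem~\ref{th-existence-pullback-attractors-line-parabolic}, in analogy with the ODE summary Theorem~\ref{th-existence-pullback-attractors-line-verion-2}, and gives no explicit proof. Your pullback argument---specialising Theorem~\ref{th-existence-pullback-attractors-line-parabolic} with $\beta=\alpha$, $\nu=\delta$, $\eta=\lambda$---is exactly the intended reduction, and the uniform boundedness for $\lambda\le-1$ follows immediately.

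Where your argument genuinely adds to the paper is the construction of the uniformly bounded compact forward absorbing family $\{K(t)\}$. In the ODE analogue (Theorem~\ref{th-existence-forward-attractor-line}) closed balls in $\mathbb{R}^N$ are already compact, so the issue does not arise; in the PDE setting the paper's proof of Theorem~\ref{th-existence-pullback-attractors-line-parabolic} uses the fixed one-step smoothing $K(t)=S_f(t,t-1)B^+[0,R(t-1)]$, but as you correctly point out this fails to give a \emph{uniformly bounded} union in the forward direction because the nonuniform factor $e^{\delta|t|}$ survives. Your time-dependent look-back $\sigma(t)=t-1-\tfrac{\delta}{\alpha}|t|$ is the right fix: the choice of slope $\delta/\alpha$ exactly cancels the nonuniformity in $\|S_a(t,\sigma(t))\|$, while $\alpha>\delta$ ensures $\sigma(t)\to+\infty$ so that the two-step absorption argument goes through. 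This point is glossed over in the paper, so your treatment is the more careful one.
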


\begin{example}\label{example-simpler-example-parabolic}
	Define $a(t):=-c-dt\sin(t)I_X$ where $c,d>0$ such that 
	$A_0:=A-c$ has spectrum satisfying $\sigma(A_0)\subset (-\infty,-r)$, for some $r>d$. Then $\mathcal{S}_a$ satisfies hypothesis \textbf{(H3)}.
\end{example}

%

\subsection{Existence of nonuniform exponential dichotomies}
\label{subsec-existence-NEDII}
In this subsection our goal is to provide examples evolution processes that admits NEDII.
We propose a theory that relates nonuniform exponential dichotomies with continuous separation for \textit{strongly monotone dynamical systems}. From these analysis, we provide examples of evolution processes that admits NEDII, using the examples presented in Subsection \ref{subsec-examples_NEDII}.
\par To obtain a continuous separation we have to consider additional properties for monotone dynamical systems:

\begin{definition}
	Let $(X,\leq)$ a ordered Banach space with a nonnegative cone $X_+$. 
	We say that 
	$X$ is a \textbf{strongly ordered Banach space} if the interior of the cone
	$Int X_+$ is nonempty and there is a stronger order relation on $X$ is defined by
		\begin{equation}
	u\ll v \Leftrightarrow u-v\in  \hbox{Int}\,X_+.
	\end{equation}
\end{definition}
\par Since for Neumann or Robin boundary conditions $X=C(\overline{\Omega})$, the cone of nonnegative vectors is defined by
$X_+:=\{u\in X: u(x)\geq 0,  \ \forall \,x\in \overline{\Omega}\}$ and its interior by
$\hbox{Int}\,X_+:=\{u\in X: u(x)> 0,  \ \forall \,x\in \overline{\Omega}\}$. 
\par However, for Dirichlet boundary conditions, since the choice $X$ as $C_0(\overline{\Omega})$
has a positive cone with an empty interior, we will have to resort to an intermediate space. Hence we will treat each case of boundary condition separately.

\par  First, let us consider a linear scalar parabolic PDE with Neumann or Robin boundary conditions
\begin{equation}
\label{eq-parabolic-strogly-positive}
\left\{ 
\begin{array}{l l} 
u_t=\Delta u+ h(t,x) u,& \ t>s, \  x\in \Omega,\\
u(s)=u_0,&  \ x\in \Omega,\\
B(u)=0,& \ x\in \partial \Omega,
\end{array} 
\right.
\end{equation}
where $h(t,x)$ continuous, bounded and uniformly continuous in $t$. 
The hull of $h$,
$\mathcal{P}:=Hull(h)$ is the closure for the compact-open topology of the set of 
$t$-translates of $h$, $\{h(t+\cdot,\cdot): t\in \mathbb{R}\}$.
On the compact metric space $\mathcal{P}$ we define the translation map 
$\mathbb{R}\times \mathcal{P}\to \mathcal{P}$, $(t,p)\mapsto p\cdot t$ given by 
$p\cdot t (s,x)=p(s+t,x)$ ($s\in \mathbb{R} \hbox{ and } x\in \overline{\Omega}$) is
a continuous flow over $\mathcal{P}$. 
We denote by $\tau:\mathbb{R}^+\times \mathcal{P}\times X\to \mathcal{P}\times X$ the linear skew product semi-flow 
\begin{equation*}
\tau(t)(p,z):=(p\cdot t, \phi(t,p)z),
\end{equation*}
induced by the mild solutions $\phi(t,p)z:=S_p(t,0)z$ 
of the abstract Cauchy problem associated to 
\eqref{eq-parabolic-strogly-positive}. 
In particular, $\phi(t,p)$ are bounded linear operators on $X$ which are compact for $t>0$
and satisfy the co-cycle property $\phi(t+s,p)=\phi(t,p\cdot s)\phi(s,p)$, $t,s\geq 0$, $p\in \mathcal{P}$. 
Moreover, these operators $\phi(t,p)$ are \textbf{strongly 
positive} in $X$, i.e., for $p\in \mathcal{P}$ 
and $t>0$, $\phi(t,p)z\gg0$ if $z>0$. 
\par Therefore, 
$\tau$ admits a \textbf{continuous separation} (see Poláčik and Tereščák \cite{Polacik-Terescak} in the discrete case and Shen and Yi \cite{Shen-Yi} in the continuous case).
This means that there are two families of subspaces
$\{X_1(p)\}_{p\in \mathcal{P}}$ and $\{X_2(p)\}_{p\in \mathcal{P}}$ of $X$ which satisfy:
\begin{enumerate}
	\item $X=X_1(p)\oplus X_2(p)$;
	\item $X_1(p)=\langle e(p) \rangle$, with $e(p)\gg0$ and
	$\|e(p)\|_X=1$ for any $p\in \mathcal{P}$;
	\item $X_2(p)\cap X_+={0}$, for any $p\in \mathcal{P}$;
	\item for any $t>0$, $p\in \mathcal{P}$,
	\begin{eqnarray*}
		&\phi(t,p)X_1(p)=X_1(p\cdot t),&\\
		&\phi(t,p)X_2(p)\subset X_2(p\cdot t);& 
	\end{eqnarray*}
	\item there are $M,\nu>0$ such that for any $p\in P$, $z\in X_2(p)$ with $\|z\|=1$ and 
	$t>0$ we have
	\begin{equation}\label{eq-property-5-continuous-separation}
	\|\phi(t,p)z\|_X\leq Me^{-\nu t}\|\phi(t,p)e(p)\|_X. 
	\end{equation}
\end{enumerate}
In this situation, the $1$-dim invariant subbundle
\begin{equation*}
\bigcup_{p\in P} \{p\}\times X_1(p)
\end{equation*}
is called the \textbf{principal bundle}.
\par Let $Q_i(p):X\to X_i(p)$ be the projection of $X$ onto $X_i(p)$, for each $p\in \mathcal{P}$
and $i=1,2$. Since the mapping $p\mapsto Q_i(p)$ is continuous
and $\mathcal{P}$ is compact, the projections are uniformly bounded over $\mathcal{P}$, i.e., there exist 
$C_1,C_2>0$ such that
\begin{equation}\label{eq-projections-continuous-separation-uniformlybounded}
\sup_{p\in P}\|Q_i(p)\|_{\mathcal{L}(X)}\leq C_i, \ i=1,2.
\end{equation} 

\begin{remark}\label{remark-principle-bundle}
\par The continuous separation allows to associate to $\phi$ a $1$-dim 
continuous linear co-cycle $c(t,p)>0$, given by
\begin{equation*}
\phi(t,p)e(p)=c(t,p)e(p\cdot t), \ t\geq 0, \ p\in P.
\end{equation*}
For each $p\in \mathcal{P}$ and $t\geq s$, we set
\begin{equation*}
S_p(t,s):=\phi(t-s,p\cdot s),
\end{equation*}
then
$\mathcal{S}_p:=\{S_p(t,s): t\geq s\}$ is an evolution process associated with
$\phi$, and similarly, we consider a scalar evolution process associated with the scalar co-cycle $c$
\begin{equation*}
\mathfrak{s}_p(t,s):=c(t-s,p\cdot s).
\end{equation*}
Therefore,
\begin{equation}\label{eq-evolutionprocess-principalbunddle}
S_p(t,s)e(p\cdot s)=\mathfrak{s}_p(t,s)e(p\cdot t), \ t\geq s, \ p\in \mathcal{P}.
\end{equation}
\par Since $(p\cdot s)^e(t-s)=p^e(t)$, $t\geq s$, 
this notation is coherent with this section: 
given $p\in \mathcal{P}$,
for each $(s,u_0)\in \mathbb{R}\times X$,
the mapping $t\mapsto S_p(t,s)u_0$ is the mild solution of 
the abstract Cauchy problem
\begin{equation*}
\left\{ 
\begin{array}{l l} 
u_t=A u+ p^e(t) u,& \ t>s,\\
u(s)=u_0.& 
\end{array} 
\right.
\end{equation*}
For $p=h$ we obtain the abstract Cauchy problem induced by 
\eqref{eq-parabolic-strogly-positive}.
\end{remark}

\par Now, we are read to characterized existence of nonuniform
exponential dichotomies for these dynamical systems that admits a continuous separation.

\begin{theorem}\label{th-1-dim-to-infinite-dim--existence-NEDII}
	Let $p\in \mathcal{P}$, and $\mathcal{S}_p$ and $\{\mathfrak{s}_p(t,s): t\geq s\}$ the evolution processes defined above. 
	Then, $\mathcal{S}_p$ admits a NEDII with $\Pi^u_\mathcal{S}=0\in \mathcal{L}(X)$ if and only if $\{\mathfrak{s}_p(t,s): t\geq s\}$ admits a NEDII with $\Pi^u_\mathfrak{s}=0\in \mathcal{L}(\mathbb{R})$.
\end{theorem}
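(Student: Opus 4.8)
The plan is to reduce the equivalence to a comparison between the operator norm $\|S_p(t,s)\|_{\mathcal{L}(X)}$ and the scalar quantity $\mathfrak{s}_p(t,s)=c(t-s,p\cdot s)>0$. Since $\Pi^u_\mathcal{S}=0$ (resp. $\Pi^u_\mathfrak{s}=0$) forces the stable projection to be the identity, the invariance and isomorphism requirements in Definition \ref{def-nonuniform-exp-dichotomy-2} are automatic and the second inequality (the ``$t<s$'' one) is vacuous; so in both cases ``NEDII with null unstable projection'' means exactly a bound of the form $\|\cdot\|\le M e^{\delta|t|}e^{-\alpha(t-s)}$ for $t\ge s$. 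The implication ``$\mathcal{S}_p$ admits NEDII $\Rightarrow$ $\mathfrak{s}_p$ admits NEDII'' is then immediate: from \eqref{eq-evolutionprocess-principalbunddle} and $\|e(q)\|_X=1$ we get $\mathfrak{s}_p(t,s)=\|S_p(t,s)e(p\cdot s)\|_X\le\|S_p(t,s)\|_{\mathcal{L}(X)}$, so any bound for $\mathcal{S}_p$ transfers verbatim, with the same exponent and the same growth order.

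The content is the converse. Assuming $\mathfrak{s}_p(t,s)\le M_0 e^{\delta|t|}e^{-\alpha(t-s)}$ for $t\ge s$, fix $u_0\in X$ with $\|u_0\|_X=1$ and decompose $u_0=Q_1(p\cdot s)u_0+Q_2(p\cdot s)u_0$. On the principal bundle we have $Q_1(p\cdot s)u_0=\zeta\,e(p\cdot s)$ with $|\zeta|=\|Q_1(p\cdot s)u_0\|_X\le C_1$ by \eqref{eq-projections-continuous-separation-uniformlybounded}, hence by \eqref{eq-evolutionprocess-principalbunddle}
$\|S_p(t,s)Q_1(p\cdot s)u_0\|_X=|\zeta|\,\mathfrak{s}_p(t,s)\le C_1 M_0 e^{\delta|t|}e^{-\alpha(t-s)}$. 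For the complementary part, set $w:=Q_2(p\cdot s)u_0\in X_2(p\cdot s)$ with $\|w\|_X\le C_2$; using $S_p(t,s)=\phi(t-s,p\cdot s)$ and applying property \eqref{eq-property-5-continuous-separation} of the continuous separation to $w/\|w\|_X$ at base point $p\cdot s$,
\begin{equation*}
\|S_p(t,s)w\|_X\le \|w\|_X\, M e^{-\nu(t-s)}\,\|\phi(t-s,p\cdot s)e(p\cdot s)\|_X = \|w\|_X\, M e^{-\nu(t-s)}\,\mathfrak{s}_p(t,s),
\end{equation*}
where the last identity is again \eqref{eq-evolutionprocess-principalbunddle} together with $\|e(p\cdot t)\|_X=1$. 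Since $t\ge s$ gives $e^{-\nu(t-s)}\le 1$, this is at most $C_2 M M_0 e^{\delta|t|}e^{-\alpha(t-s)}$.

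Adding the two estimates and taking the supremum over $\|u_0\|_X=1$ yields $\|S_p(t,s)\|_{\mathcal{L}(X)}\le (C_1+C_2 M)M_0\, e^{\delta|t|}e^{-\alpha(t-s)}$ for $t\ge s$, i.e. $\mathcal{S}_p$ admits a NEDII with $\Pi^u_\mathcal{S}=0$, the \emph{same} exponent $\alpha$ and the \emph{same} growth order $\delta$ — which is exactly what is needed for the examples in Subsection \ref{subsec-examples_NEDII} to propagate to the parabolic setting. The only genuinely delicate point is the bookkeeping with the shifted cocycle variables: one must consistently use $S_p(t,s)=\phi(t-s,p\cdot s)$ and $\mathfrak{s}_p(t,s)=c(t-s,p\cdot s)$ so that \eqref{eq-property-5-continuous-separation}, which is stated at base point $p\cdot s$, is invoked on the correct element, and it is the uniformity over $\mathcal{P}$ of the constants $C_1,C_2,M,\nu$ (guaranteed by compactness of $\mathcal{P}$ via \eqref{eq-projections-continuous-separation-uniformlybounded}) that prevents any spurious $s$-dependence from entering beyond the allowed $e^{\delta|t|}$.
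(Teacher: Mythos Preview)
Your proof is correct and follows essentially the same approach as the paper: for one direction you use $\mathfrak{s}_p(t,s)=\|S_p(t,s)e(p\cdot s)\|_X\le\|S_p(t,s)\|_{\mathcal{L}(X)}$, and for the converse you decompose along the continuous separation, bound the $X_1$ part directly by $C_1\,\mathfrak{s}_p(t,s)$ and the $X_2$ part via property \eqref{eq-property-5-continuous-separation} by $C_2 M e^{-\nu(t-s)}\mathfrak{s}_p(t,s)$, arriving at the same constant $C_1+MC_2$. Your write-up is in fact somewhat more careful than the paper's (you spell out why the unstable inequality is vacuous and why no $s$-dependence leaks in), but the argument is the same.
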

\begin{proof}
	\par First, assume that $\mathcal{S}_p$ admits a NEDII with $\Pi^s_\mathcal{S}=Id_X$, 
	there exist $\widetilde{M},\alpha>0$, and 
	$\delta\geq 0$ such that
	\begin{equation*}
		\|S_p(t,s)\|_{\mathcal{L}(X)}\leq \widetilde{M}e^{\delta|t|}e^{-\alpha(t-s)}, \ t\geq s.
	\end{equation*}
	To conclude that $\mathfrak{s}_p$ admits a NEDII with $\Pi^s_\mathfrak{s}=Id_\mathbb{R}$,
	note that
	\begin{eqnarray}
	\mathfrak{s}_p(t,s)=\|S_p(t,s)e_1(p\cdot s)\|_X, \ t\geq s.
	\end{eqnarray}
	\par Reciprocally, suppose there are $\widetilde{M},\alpha>0$, and 
	$\delta\geq 0$ such that
	\begin{equation*}
		\mathfrak{s}_p(t,s)\leq \widetilde{M}e^{\delta|t|}e^{-\alpha(t-s)}, \ t\geq s.
	\end{equation*}
	Let $x\in X$, then there exists $x_i(p\cdot s)\in X_i(p\cdot s)$, for $i=1,2$, such that
	$u=u_1(p\cdot s)+u_2(p\cdot s)$. 
	Suppose without loss of generality that
	$u_2(p\cdot s)\neq 0$. 
	Thus, from \eqref{eq-property-5-continuous-separation}, 
	\eqref{eq-projections-continuous-separation-uniformlybounded} and \eqref{eq-evolutionprocess-principalbunddle}, we have that 
	\begin{eqnarray*}
		\|S_p(t,s)u\|_X&\leq& \|S_p(t,s)u_1(p\cdot s)\|_X+\|S_p(t,s)u_2(p\cdot s)\|_X\\
		&\leq& \mathfrak{s}_p(t,s)\|u_1(p\cdot s)\|_X+ Me^{-\nu(t-s)} \mathfrak{s}_p(t,s)\|u_2(p\cdot s)\|_X\\
		&\leq& (C_1+MC_2e^{-\nu(t-s)}) \mathfrak{s}_p(t,s)\|u\|_X\\
		&\leq & C \widetilde{M} e^{\delta|t|}e^{-\alpha(t-s)}\|u\|_X, \ t\geq s,
	\end{eqnarray*}
	where $C=C_1+MC_2$. The proof is complete.
\end{proof}

%

\par Now, we consider the case of Dirichlet boundary condition, i.e., Problem 
\ref{eq-parabolic-strogly-positive} with $Bu=u$. 
To obtain a continuous separation, we introduced the strongly ordered Banach space $E^\alpha$ defined as follows. 
\par Let us consider the realization of the Laplacian operator on $E:=L^p(\Omega)$ for a fixed 
$p\in (N,+\infty)$, i.e., the operator 
$A_p:D(A_p)\subset L^p(\Omega)\to L^p(\Omega)$ defined by 
$A_pu=\Delta u$ (in the weak sense) for $u\in D(A_p)$. Then $-A_p$ is sectorial, densely defined and $0\in \rho (A_p)$. 
Then the fractional power $E^\alpha=(D(-A_p^\alpha),\|\cdot\|_\alpha)$ is a Banach space with the norm $\|u\|_\alpha=\|(-A_p)^\alpha u\|_p$ and satisfies 
$E^\alpha \hookrightarrow C^1(\overline{\Omega})\cap C_0(\overline{\Omega})$ for $\alpha\in (1/2+N/(2p), 1)$, see Henry \cite[Theorem 1.6.1]{Henry-1}.
Then the partial strong order in the Banach space $E^\alpha$ with the cone of positive elements $E^\alpha_+=\{u\in E^\alpha: u(x)>0, \hbox{ for } x\in \Omega \}$, which has a nonempty interior
\begin{equation*}
	Int \, E^\alpha_+=\{u\in E^\alpha_+: u(x)>0, \hbox{ for } x\in \Omega \hbox{ and } \frac{\partial u}{\partial n}(x)=0, \hbox{ for } x\in \partial \Omega \}.
\end{equation*}
Furthermore, $\{e^{A_pt} t\geq 0\}$ is a strongly continuous analytic semigroup over 
$E^\alpha$, and $e^{A_pt}:E\to E^\alpha$ is a compact operator, for each $t>0$.
\par Recall that $\mathcal{P}$ is the hull of $h$ and consider the linear skew product semiflow $\tau:\mathbb{R}^+\times \mathcal{P}\times E^\alpha\to \mathcal{P}\times E^\alpha$  
\begin{equation*}
	\tau(t)(p,z):=(p\cdot t, \phi(t,p)z), \ (p,z)\in \mathcal{P}\times E^\alpha
\end{equation*}
induced by the mild solutions $\phi(t,p)z:=S_p(t,0)z\in E^\alpha$ 
of the abstract Cauchy problem
\begin{equation}
\label{eq-parabolic-strogly-positive-Dirichlet-abstract}
\left\{ 
\begin{array}{l l} 
u_t=Au+ p^e(t) u, & t>s\\
u(s)=u_0\in E^\alpha,&\end{array} 
\right.
\end{equation}
As in the case of Neumann/Robin boundary conditions, $\varphi(t,p)$ is strongly positive over $E^\alpha$, for each $(t,p)\in \mathbb{R}\times E^\alpha$. Hence, $\tau$ admits a continuous separation over $\mathcal{P}\times E^\alpha$. Hence, the same analysis for Neumann and Robin boundary conditions in the phase space $X$ holds true for Dirichlet boundary conditions in $E^\alpha$. 

\par Now we provide conditions that guarantees that the existence of nonuniform exponential dichotomies in $E^\alpha$ actually implies in $X$.

\begin{lemma}\label{lemma-NEDII-E^a-to-X}
	Let $a:\mathbb{R}\times \overline{\Omega}\to \mathbb{R}$ be a bounded continuous real valued function. Assume that
	the evolution process $\mathcal{S}_a=\{S_a(t,s): t\geq s\}$ associated with 
	\eqref{eq-applications-nonhomogenous-parabolic} with $b\equiv 0$
	admits NEDII on 
	$E^\alpha$ with $\Pi^u=0$. Then $\mathcal{S}_a$ admits NEDII on $X$  with $\Pi^u=0$.
\end{lemma}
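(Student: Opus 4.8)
The plan is to reduce matters to the single inequality that $\Pi^u=0$ demands, and to transfer it from $E^\alpha$ to $X=C_0(\overline{\Omega})$ by combining the NEDII estimate on $E^\alpha$ with the smoothing action of the analytic semigroup and the variation of constants formula for $\mathcal{S}_a$. Write the hypothesis (with $\Pi^u=0$, so that the unstable inequality is vacuous and the invariance/isomorphism conditions are automatic) as
\begin{equation}\label{eq-hyp-NEDII-Ea}
\|S_a(t,s)\|_{\mathcal{L}(E^\alpha)}\le M e^{\delta|t|}e^{-\omega(t-s)},\qquad t\ge s,
\end{equation}
for some $M,\omega>0$ and $\delta\ge 0$. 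Recall that $E^\alpha\hookrightarrow C_0(\overline{\Omega})=X$ with some constant $c_0>0$, that $X\hookrightarrow L^p(\Omega)$ continuously since $\Omega$ is bounded, and that the analytic semigroup $\{e^{At}\}$ (viewed as the $L^p$-realization of the Laplacian, consistent with its action on $X$) satisfies $\|e^{At}\|_{\mathcal{L}(X)}\le C e^{\mu t}$ and $\|e^{At}\|_{\mathcal{L}(X,E^\alpha)}\le C t^{-\alpha}e^{\mu t}$ for $t>0$; recall also that $a$ is bounded.

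First I would extract two uniform-in-$s$ bounds from the formula $S_a(t,s)u=e^{A(t-s)}u+\int_s^t e^{A(t-r)}a^e(r)S_a(r,s)u\,dr$. A singular Gr\"onwall argument, using $\|a\|_\infty<+\infty$ and $\int_0^1\sigma^{-\alpha}\,d\sigma=(1-\alpha)^{-1}<+\infty$, produces a constant $L\ge 1$, independent of $s$, with $\|S_a(t,s)\|_{\mathcal{L}(X)}\le L$ for all $0\le t-s\le 1$; inserting this back into the formula at $t=s+1$ gives moreover $\|S_a(s+1,s)\|_{\mathcal{L}(X,E^\alpha)}\le L$ for every $s$.

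With these in hand, split into two regimes. For $s\le t\le s+1$ the bound $\|S_a(t,s)\|_{\mathcal{L}(X)}\le L\le Le^{\omega}e^{\delta|t|}e^{-\omega(t-s)}$ is immediate since $\delta\ge 0$ and $e^{-\omega(t-s)}\ge e^{-\omega}$. For $t\ge s+1$, factor $S_a(t,s)=S_a(t,s+1)S_a(s+1,s)$, use $\|S_a(s+1,s)\|_{\mathcal{L}(X,E^\alpha)}\le L$, apply \eqref{eq-hyp-NEDII-Ea} to $S_a(t,s+1)$ acting on $E^\alpha$, and then the embedding $E^\alpha\hookrightarrow X$:
\begin{equation*}
\|S_a(t,s)u\|_X\le c_0\,\|S_a(t,s+1)S_a(s+1,s)u\|_\alpha\le c_0 M e^{\delta|t|}e^{-\omega(t-s-1)}L\,\|u\|_X\le c_0 M L e^{\omega}\,e^{\delta|t|}e^{-\omega(t-s)}\|u\|_X.
\end{equation*}
Setting $\widetilde{M}:=\max\{Le^{\omega},\,c_0MLe^{\omega}\}$ we obtain $\|S_a(t,s)\|_{\mathcal{L}(X)}\le\widetilde{M} e^{\delta|t|}e^{-\omega(t-s)}$ for all $t\ge s$, so $\mathcal{S}_a$ admits a NEDII on $X$ with $\Pi^u=0$, bound $\widetilde{M} e^{\delta|t|}$ and exponent $\omega>0$.

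The main obstacle is the uniform-in-$s$ smoothing bound $\|S_a(s+1,s)\|_{\mathcal{L}(X,E^\alpha)}\le L$: it rests on the singular Gr\"onwall inequality and, more delicately, on the consistency between the $L^p$-realization of the Laplacian (which supplies the $X\to E^\alpha$ regularization) and its action on $X=C_0(\overline{\Omega})$, together with the fact that the $X$-valued and $E^\alpha$-valued mild solutions of \eqref{eq-applications-nonhomogenous-parabolic} with $b\equiv 0$ coincide wherever both make sense. This is classical for parabolic equations; the remaining steps are routine composition estimates.
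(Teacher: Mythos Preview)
Your proposal is correct and follows essentially the same approach as the paper: factor $S_a(t,s)=S_a(t,s+1)S_a(s+1,s)$, use the parabolic smoothing $X\to E^\alpha$ in one unit of time (via the variation-of-constants formula, boundedness of $a$, and a Gr\"onwall argument), and then apply the NEDII estimate on $E^\alpha$ together with $E^\alpha\hookrightarrow X$. Your treatment is in fact a bit cleaner than the paper's: you first obtain the uniform $X$-bound on $[s,s+1]$ and then feed it into the $X\to E^\alpha$ estimate (avoiding the delicate issue that $\|S_a(r,s)u\|_{E^\alpha}$ may blow up as $r\to s^+$ when $u\in X\setminus E^\alpha$), and you explicitly cover the short-time case $0\le t-s\le 1$, which the paper leaves implicit.
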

\begin{proof}
	Assume that $\mathcal{S}_a$ admits a NEDII on $E^\alpha$ with $\Pi^u=0$, i.e.,
	there exist $M,\delta,\alpha>0$ such that
	\begin{equation}
	\|S_a(t,s)\|_{\mathcal{L}(E^\alpha)}\leq Me^{\delta|t|} e^{-\alpha(t-s)}, \ \ t\geq s.
	\end{equation}
	Let $u\in X$ with $\|u\|_X=1$. 
	Since $E^\alpha \hookrightarrow X$, there exists $C>0$ such that
	\begin{equation}\label{eq-lemma-NEDII-E^a-to-X-1}
	\|S_a(t,s)u\|_X\leq C \|S_a(t,s)u\|_{E^\alpha}, \ \ t\geq s.
	\end{equation}
	Now, suppose that $t\geq s+1$, we have that $S_a(s+1,s)u\in E^\alpha$, and 
	\begin{equation}\label{eq-lemma-NEDII-E^a-to-X-2}
	\|S_a(t,s)u\|_{E^\alpha}\leq  Me^{\delta|t|} e^{-\alpha(t-(s+1))} \|S_a(s+1,s)u\|_{E^\alpha}, \ \ t\geq s.
	\end{equation}
	Note that, 
	\begin{equation}
	\|S_a(s+1,s)u\|_{E^\alpha}\leq \|e^{A 1} u\|_{E^\alpha} +
	\int_{s}^{s+1} \|e^{A(s+1-r)}a^e(r) S_a(r,s)u\|_{E^\alpha} dr
	\end{equation}
	First, since $e^{A 1}: X\to E^\alpha$ is a compact operator and
	$\|u\|_X=1$, there exists 
	$C>0$ such that $\|e^{A 1} u\|_{E^\alpha}\leq C$. 
	Second, the fact that $a$ is bounded
	$\sup_{r\in \mathbb{R}} \|a(r)\|_E\leq C_1$, for some $C_1>0$, and
	\begin{equation}
	\|S_a(s+1,s)u\|_{E^\alpha}\leq C +
	C_1 \int_{s}^{s+1} (s+1-r)^{-\alpha}\|S_a(r,s)u\|_{E^\alpha} dr.
	\end{equation}
	Form a Singular Gronwall's Lemma \cite[Lemma 6.24]{Henry-1}, we obtain that 
	$\|S_a(s+1,s)u\|_{E^\alpha}\leq C_3$, for some $C_3=C_3(C,C_1,\alpha)>0$.
	Thus, from \eqref{eq-lemma-NEDII-E^a-to-X-1} and \eqref{eq-lemma-NEDII-E^a-to-X-2}
	we conclude that $\mathcal{S}_a$ admits NEDII on $X$ with $\Pi^u=0$, and the proof is complete.
\end{proof}

\par Now, we are ready to state the main result of this subsection.
The proof use similar ideas presented in \cite[page 18]{Caraballo-Langa-Obaya-Sanz}.
\begin{theorem}\label{th-exsitence-NEDII-parabolic}
		Let $g:\mathbb{R}\to \mathbb{R}$ be such that the scalar ODE
		\begin{equation}\label{eq-scalar-to-parabolic-NEDII}
		\dot{x}=g(t)x, \ x(s)=x_0.
		\end{equation}
		is associated with an evolution process
	$\mathcal{T}=\{T(t,s): t\geq s\}$  over $\mathbb{R}$
	that admits a NEDII with $\Pi^u_\mathcal{T}=0 \in \mathcal{L}(\mathbb{R})$. 
	\par Then, there exists a function 
	$a:\mathbb{R}\times \overline{\Omega}\to \mathbb{R}$ such that 
	the abstract Cauchy problem 
	\begin{equation*}
	\left\{ 
	\begin{array}{l l} 
	u_t=A u+ a^e(t) u,& \ t>s,\\
	u(s)=u_0,& 
	\end{array} 
	\right.
	\end{equation*}
	induces an evolution process 
	 $\mathcal{S}_a=\{S_a(t,s): t\geq s\}$ 
	 that admits a NEDII with $\Pi^u=0$ over $X$ in the case of Neumann or Robin boundary conditions and over $E^\alpha$ in the case of Dirichlet boundary condition.
	 \par Additionally, if $g$ is bounded, then
	 $\mathcal{S}_a=\{S_a(t,s): t\geq s\}$ admits a NEDII in $X$, for any type of boundary condition.
	\end{theorem}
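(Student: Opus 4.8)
The plan is to produce the coefficient by realizing the scalar dynamics of $\dot x=g(t)x$ as the motion on the principal bundle of a linear parabolic problem with a spatially \emph{homogeneous} coefficient. Let $\mu_1\ge 0$ be the principal eigenvalue of $-\Delta$ under the boundary condition $B$ (so $\mu_1=0$ for Neumann and $\mu_1\ge 0$ for Dirichlet or Robin), and let $\psi\gg 0$ be an associated principal eigenfunction, $-\Delta\psi=\mu_1\psi$. Define $a(t,x):=g(t)+\mu_1$; then $a$ inherits the regularity of $g$ (continuity, and boundedness when $g$ is bounded), and $a^e(t)$ acts as multiplication by the constant $g(t)+\mu_1$. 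This is the natural candidate because it forces the principal bundle of $u_t=\Delta u+a^e(t)u$ to carry exactly the co-cycle $\mathcal{T}$.

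First I would identify $\mathcal{S}_a$ explicitly. Since $a^e(t)$ is a scalar multiple of the identity, it commutes with $e^{A\tau}$, so uniqueness of mild solutions gives $S_a(t,s)=e^{\int_s^t a(r)\,dr}\,e^{A(t-s)}=e^{\mu_1(t-s)}\,T(t,s)\,e^{A(t-s)}$, where $T(t,s)=e^{\int_s^t g(r)\,dr}$. In particular $\langle\psi\rangle$ is invariant and $S_a(t,s)\psi=T(t,s)\psi$, i.e. the scalar co-cycle on the principal bundle is $\mathcal{T}$. Next I would use the decay of the diffusion semigroup: on $X=C(\overline\Omega)$ (Neumann/Robin) it is positive and a sup-norm contraction, which together with $e^{A\tau}\psi=e^{-\mu_1\tau}\psi$ gives $\|e^{A\tau}\|_{\mathcal{L}(X)}\le C e^{-\mu_1\tau}$ for $\tau\ge 0$; on $E^\alpha$ (Dirichlet) the analytic semigroup with $0\in\rho(A_p)$ and spectral bound $-\mu_1$ satisfies $\|e^{A_p\tau}\|_{\mathcal{L}(E^\alpha)}\le C_\alpha e^{-\mu_1\tau}$ for $\tau\ge 0$. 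Hence in either setting $\|S_a(t,s)\|\le C' T(t,s)$; since $\mathcal{T}$ admits a NEDII with $\Pi^u=0$, $T(t,s)\le M e^{\delta|t|}e^{-\alpha(t-s)}$, and therefore $\mathcal{S}_a$ admits a NEDII with $\Pi^u=0$, the same exponent, and bound $C'M e^{\delta|t|}$, over $X$ in the Neumann/Robin case and over $E^\alpha$ in the Dirichlet case. When $g$ is bounded this last step can equivalently be phrased through the continuous separation of the skew-product semiflow over the hull of $a$ together with Theorem~\ref{th-1-dim-to-infinite-dim--existence-NEDII}, which reduces NEDII for $\mathcal{S}_a$ to NEDII for the scalar co-cycle $\mathfrak{s}_a=\mathcal{T}$.

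Finally, assume $g$, hence $a$, is bounded. In the Dirichlet case we then have a NEDII with $\Pi^u=0$ on $E^\alpha$ for a bounded continuous coefficient $a$, so Lemma~\ref{lemma-NEDII-E^a-to-X} transfers it to $X=C_0(\overline\Omega)$; combined with the Neumann/Robin case this yields a NEDII with $\Pi^u=0$ on $X$ for every boundary condition. The delicate point is precisely the Dirichlet case: the cone of $C_0(\overline\Omega)$ has empty interior, so the strong monotonicity/continuous separation theory and the principal eigenfunction estimates must be carried out in the intermediate space $E^\alpha$, and returning to $X$ genuinely requires boundedness of $a$ — exactly the hypothesis of Lemma~\ref{lemma-NEDII-E^a-to-X}, whose proof rests on a singular Gronwall estimate for $\|S_a(s+1,s)\|_{\mathcal{L}(E^\alpha)}$ controlled by $\sup_r\|a(r)\|$. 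One should also check that $a=g+\mu_1$ is admissible for the continuous-separation construction (continuity in all variables, and uniform continuity in $t$ if a compact metrizable hull is wanted); this is automatic when $g$ is continuous and bounded, while for unbounded $g$ one simply relies on the explicit formula for $S_a(t,s)$ above together with the semigroup estimates.
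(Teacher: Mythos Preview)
Your argument is correct and considerably more direct than the paper's. You choose the spatially homogeneous coefficient $a(t,x)=g(t)+\mu_1$, which makes $S_a(t,s)=T(t,s)\,e^{\mu_1(t-s)}e^{A(t-s)}$ explicit; the NEDII estimate then follows from the semigroup bound $\|e^{A\tau}\|\le C e^{-\mu_1\tau}$ (obtained via positivity and comparison with the principal eigenfunction on $C(\overline\Omega)$ for Neumann/Robin, and via the spectral bound of the analytic semigroup on $E^\alpha$ for Dirichlet). The paper instead starts from an \emph{arbitrary} bounded, uniformly continuous $h(t,x)$, invokes the continuous separation of the skew-product semiflow over $Hull(h)$ to get a scalar cocycle $c$, uses the Johnson--Palmer--Sell cohomology lemma to replace $c$ by a smooth cocycle $c_0(t,p)=\exp\int_0^t d(p\cdot r)\,dr$, and then sets $a(t,x)=h(t,x)+g(t)-d(h\cdot t)$, so that the principal-bundle cocycle of $\mathcal{S}_a$ becomes exactly $\mathcal{T}$; Theorem~\ref{th-1-dim-to-infinite-dim--existence-NEDII} then lifts the scalar NEDII to $\mathcal{S}_a$. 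What the paper's route buys is a genuinely $x$-dependent coefficient and a template showing how \emph{any} linear parabolic problem can be corrected by a scalar additive term to inherit a prescribed scalar dichotomy; what your route buys is a one-line construction that avoids hulls, continuous separation, and cocycle cohomology entirely. For the theorem as stated (mere existence of some $a$), your approach suffices, and both constructions reduce the final Dirichlet step on $X$ to Lemma~\ref{lemma-NEDII-E^a-to-X} under the boundedness of $g$.
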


\begin{proof} 
	From Johnson \textit{et al.} \cite[Lemma 3.2]{Johnson-Palmer-Sell-85}, we know that there exists
	a smooth cocycle $c_0(t,p)=exp\int_{0}^{t}d(p\cdot r) \,dr$ for some
	continuous map $d:\mathcal{P}\to \mathbb{R}$, and a continuous map $f:\mathcal{P}\to \mathbb{R}\setminus \{0\}$ such that 
	\begin{equation}\label{eq-cohomology-c_0-c}
	c(t,p)=f(p\cdot t) c_0(t,p) f(p)^{-1}, \ t\in \mathbb{R}, \ p\in \mathcal{P}.
	\end{equation}
	Consider $k_p(t):=g(t)-d(p\cdot t)$ and
	define
	\begin{equation*}
	\tilde{\mathfrak{s}}_0(t,s;p):=c_0(t-s,p \cdot s)e^{\int_s^tk_p(r)dr}, \ t\geq s, \ p\in \mathcal{P}.
	\end{equation*} 
	Then, for each $(s,x_0)\in \mathbb{R}^2$, the mapping $t\mapsto \tilde{\mathfrak{s}}_0(t,s;p)x_0$ is the solution of \eqref{eq-scalar-to-parabolic-NEDII} and
	$T(t,s)=\tilde{\mathfrak{s}}_0(t,s;p)$, $t\geq s$ and $p\in \mathcal{P}$. 
	Since $\mathcal{T}$ admits NEDII with $\Pi^u_\mathcal{T}=0 \in \mathcal{L}(\mathbb{R})$ and 
	$f$ is bounded,
	from \eqref{eq-cohomology-c_0-c} we obtain that 
	the scalar evolution process $\{\tilde{\mathfrak{s}}(t,s;p):t\geq s\}$ defined by
	\begin{equation}
	\tilde{\mathfrak{s}}(t,s;p):=\mathfrak{s}(t,s;p) e^{\int_s^tk_p(r)dr},
	\end{equation}
	admits a NEDII with $\Pi^u_\mathcal{T}=0 \in \mathcal{L}(\mathbb{R})$, where $\mathfrak{s}(t,s;p) $ is defined in Remark \ref{remark-principle-bundle}.
	 Then, by similar arguments to those used in the proof of Theorem \ref{th-1-dim-to-infinite-dim--existence-NEDII}, we obtain that the evolution process defined by
	\begin{equation*}
	\widetilde{S}(t,s;p):=S_p(t,s)e^{\int_s^tk_p(r)dr}, \ t\geq s, \ p\in \mathcal{P},
	\end{equation*} 
	admits NEDII with $\Pi^u=0$ in $X$ in the case of Neumann or Robind boundary conditions or in $E^\alpha$ for Dirichlet boundary conditions.
	\par Let $a(t,x):=h(t,x)+k_h(t)$, then $S_a(t,s)=\widetilde{S}(t,s;h)$, for $t\geq s$.
	\par Finally, if $g$ is bounded then $a:\mathbb{R}\times \overline{\Omega}\to \mathbb{R}$ is bounded and the proof follows from Lemma \ref{lemma-NEDII-E^a-to-X}.
	\end{proof}

	Thanks to Theorem \ref{th-exsitence-NEDII-parabolic}, each example from Subsection \ref{subsec-examples_NEDII} induces a linear evolution process over $X$ that admits NEDII. For instance:

	\begin{corollary}
		There exists $a:\mathbb{R}\times \Omega\to \mathbb{R}$ such that the associated evolution process $\mathcal{S}_a$ over $X$ admits a NEDII on $\mathbb{R}$, and does not admit any NEDI over $X$. 
	\end{corollary}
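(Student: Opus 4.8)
The plan is to feed the continuous bounded scalar example of Proposition~\ref{Example-f-continuous-bounded} into Theorem~\ref{th-exsitence-NEDII-parabolic}, and then to rule out a NEDI by reducing, through the continuous separation, to the scalar obstruction already recorded in Proposition~\ref{Example-f-continuous-bounded}. First I would fix a continuous $g:\mathbb{R}\to\mathbb{R}$ with $\lim_{t\to+\infty}g(t)=1$ and $\lim_{t\to-\infty}g(t)=-1$; such a $g$ is bounded. By Proposition~\ref{Example-f-continuous-bounded}, the scalar evolution process $\mathcal{T}=\{T(t,s):t,s\in\mathbb{R}\}$ generated by $\dot{x}=g(t)x$ admits a NEDII on $\mathbb{R}$ with $\Pi^u_{\mathcal{T}}=0\in\mathcal{L}(\mathbb{R})$ and admits no NEDI on $\mathbb{R}$. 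Since $g$ is bounded, Theorem~\ref{th-exsitence-NEDII-parabolic} then produces a function $a:\mathbb{R}\times\overline{\Omega}\to\mathbb{R}$ such that the linear evolution process $\mathcal{S}_a$ on $X$ associated with $u_t=Au+a^e(t)u$ admits a NEDII on $\mathbb{R}$ with $\Pi^u=0$. This is the first half of the statement, so everything reduces to showing that $\mathcal{S}_a$ admits no NEDI over $X$.

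For that I would argue by contradiction, running --- now for NEDI --- the reduction used in the proof of Theorem~\ref{th-1-dim-to-infinite-dim--existence-NEDII}. Suppose $\mathcal{S}_a$ admits a NEDI on $\mathbb{R}$ with projections $\{\Pi^u(t)\}$. Since $S_a(t,s)$ is injective (backward uniqueness) and compact for $t>s$, while it must restrict to an isomorphism of $R(\Pi^u(s))$ onto $R(\Pi^u(t))$, the unstable subbundle is finite-dimensional; so either $\Pi^u\equiv0$, or $R(\Pi^u(t))$ is a nontrivial finite-dimensional bundle. Recall also that $\mathcal{S}_a$, having continuous bounded coefficient $a$, fits the continuous-separation framework ($X=X_1(p)\oplus X_2(p)$, with unit generators $e(\cdot)$ of $X_1$, uniformly bounded projections $Q_i(p)$ as in \eqref{eq-projections-continuous-separation-uniformlybounded}, and $X_2$ strongly dominated by the principal direction as in \eqref{eq-property-5-continuous-separation}), and that its principal-bundle cocycle $\mathfrak{s}(t,s)$ differs from $T(t,s)$ by a factor bounded above and below uniformly in $t,s$ (the cohomology relation \eqref{eq-cohomology-c_0-c} from the proof of Theorem~\ref{th-exsitence-NEDII-parabolic}).

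If $\Pi^u\equiv0$, then evaluating the stable NEDI estimate of Definition~\ref{def-nonuniform-exp-dichotomy-1} on the unit vectors $e(\cdot)$ yields a NEDI on $\mathbb{R}$ for $\mathcal{T}$ with trivial unstable projection, contradicting Proposition~\ref{Example-f-continuous-bounded}. In the nontrivial case I would first show that the unstable bundle is contained in the $X_2$-subbundle along the relevant orbit: if some $u_0\in R(\Pi^u(s))$ had a nonzero $X_1$-component, propagating it backward inside the unstable bundle and projecting onto $X_1$ with $Q_1$ shows --- because $g(\tau)\to-1$ as $\tau\to-\infty$, so $\mathfrak{s}(s,\tau)\to0$ --- that this component blows up in norm as $\tau\to-\infty$, whereas the unstable estimate forces the whole backward orbit to tend to $0$; contradiction. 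Finally, for $u_0\in R(\Pi^u(s))\subset X_2$ with $\|u_0\|_X=1$, I would combine $\|S_a(t,s)u_0\|_X\ge M^{-1}e^{-\nu|t|}e^{\beta(t-s)}$, extracted from the unstable estimate via its inverse on the unstable bundle, with $\|S_a(t,s)u_0\|_X\le Me^{-\nu(t-s)}\mathfrak{s}(t,s)$, coming from \eqref{eq-property-5-continuous-separation}, and evaluate both along $s=-2n$, $t=-n$ as $n\to+\infty$; there $g\approx-1$ forces $\mathfrak{s}(t,s)\le Ce^{-(1-\epsilon)n}$ for any $\epsilon>0$, so the comparison gives $e^{(\beta+1-\epsilon)n}\le C'$, which fails for large $n$ once $\epsilon<1$ (recall $\beta>0$). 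Hence $\mathcal{S}_a$ admits no NEDI over $X$.

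The hard part will be this last case, in which the unstable projection of the hypothetical NEDI is not controlled a priori. Its resolution uses only the structure of the continuous separation: compactness of $S_a(t,s)$ makes the unstable bundle finite-dimensional; the domination \eqref{eq-property-5-continuous-separation} together with the uniform bounds \eqref{eq-projections-continuous-separation-uniformlybounded} let estimates pass to and from the one-dimensional principal bundle with only bounded distortion; and the asymptotics $g(\pm\infty)=\pm1$ do double duty --- forcing any backward-expanding invariant bundle into $X_2$ and making $X_2$ too strongly contracting on $\mathbb{R}^-$ to sustain such expansion. An equally clean route would be to isolate first a NEDI-analogue of Theorem~\ref{th-1-dim-to-infinite-dim--existence-NEDII} (the computation above is essentially its proof) and then deduce the corollary in one line; I would likely organize the final write-up that way.
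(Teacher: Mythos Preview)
Your route to the NEDII part is exactly the paper's: choose a bounded continuous $g$ as in Proposition~\ref{Example-f-continuous-bounded} and invoke Theorem~\ref{th-exsitence-NEDII-parabolic}. The paper's proof is in fact only that one line; it does not spell out the ``no NEDI'' half at all, implicitly relying on the Remark following the corollary (that the results of the subsection hold for NEDI as well) together with the NEDI analogue of Theorem~\ref{th-1-dim-to-infinite-dim--existence-NEDII}. Notice that this analogue is an if-and-only-if with $\Pi^u=0$, so the paper's argument, taken literally, rules out only a NEDI with trivial unstable projection. You go further and treat the nontrivial case explicitly via the continuous separation; this is genuine added content relative to the paper.

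Your handling of that case is essentially right, but there is a slip to fix. You use the same symbol $\nu$ for two different exponents: the growth order of the hypothetical NEDI bound (call it $\nu_1$, appearing as $e^{-\nu_1|t|}$ in the lower estimate) and the domination exponent from the continuous separation (call it $\nu_0$, appearing as $e^{-\nu_0(t-s)}$ in the upper estimate). With $s=-2n$, $t=-n$ the two contributions are $e^{-\nu_1 n}$ and $e^{-\nu_0 n}$, which do \emph{not} cancel, so your final display $e^{(\beta+1-\epsilon)n}\le C'$ does not follow as written; for large $\nu_1$ there is no contradiction. The fix is painless: take $s=-kn$, $t=-n$ with $k$ chosen so that $(\beta+\nu_0+1-\epsilon)(k-1)>\nu_1$, and then let $n\to\infty$. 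The argument showing $R(\Pi^u(s))\subset X_2(p\cdot s)$ via backward propagation of the $X_1$-component is correct and uses only the invariance of $X_1$, the uniform bound on $Q_1$, and $\sigma(s,\tau)\to0$ as $\tau\to-\infty$ (which follows from $g(\tau)\to-1$).
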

	\begin{proof}
		Let $f$ be the function satisfying conditions of Proposition \ref{Example-f-continuous-bounded} and apply Theorem \ref{th-exsitence-NEDII-parabolic}.
		\end{proof}
	
\begin{remark}
	Every result of this subsection holds true for any nonuniform exponential dichotomy of type I and II over any interval $\mathbb{R},\mathbb{R}^+,\mathbb{R}^-$.
\end{remark}	
%
%
%
%

\subsection{The adjoint problem}
\label{subsec-adjoint-parbolic-pde}
\par We consider the adjoint problem of a parabolic
and we established a fundamental relation between NEDI and NEDII similar to the invertible case Subsection \ref{subsec-NED-invertile-ep}.
\par Now, we consider the adjoint problem of \eqref{eq-applications-nonhomogenous-parabolic} with $b\equiv 0$, i.e., the backward parabolic equation
\begin{equation*}
\left\{ 
\begin{array}{l l} 
u_t=-\Delta u- a(t,x) u,& \ t<s, \  x\in \Omega\\
u(s)=u_0,& \ x\in \Omega\\
B^*(u)=0,& \ x\in \partial \Omega 
\end{array} 
\right.
\end{equation*}
\par Denote $S^*(t,s)$ to be the weak solution operator defined for each $t<s$, for details we recommend see \cite{Henry-1,Mierczynski-Shen-02}.

\par The proof of the following proposition can be found in 
\cite[Proposition 2.3.3]{Mierczynski-Shen-02}.
\begin{proposition}
	The adjoint operator of $S(t,s)$ is given by
	\begin{equation*}
	[S(t,s)]^*=S^*(s,t), \hbox{ for any } s<t.
	\end{equation*}
\end{proposition}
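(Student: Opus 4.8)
The plan is to produce a bilinear pairing that is constant along the combined forward/backward flow; this is the standard mechanism behind identities of the form $[S(t,s)]^{*}=S^{*}(s,t)$. First I would fix $s<t$, pick $\phi\in X$ and $\psi$ in the relevant dual space, and set $U(r):=S(r,s)\phi$ for $r\in[s,t]$ (the mild solution of the forward problem) and $V(r):=S^{*}(r,t)\psi$ for $r\in[s,t]$ (the weak solution of the adjoint problem with terminal datum $V(t)=\psi$). On the common interval I would study the scalar function
\[
G(r):=\langle U(r),V(r)\rangle ,
\]
where the pairing is most conveniently realized through the $L^{2}(\Omega)$ inner product, or the $L^{p}$--$L^{p'}$ duality used in Subsection~\ref{subsec-existence-NEDII} for the Dirichlet case.

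The key step is to show that $G$ is constant on $[s,t]$. By parabolic smoothing, $U$ and $V$ become classical solutions on the open interval $(s,t)$, so I would differentiate: formally
\[
G'(r)=\langle \partial_{r}U,V\rangle+\langle U,\partial_{r}V\rangle
=\langle \Delta U+a(r)U,\,V\rangle+\langle U,\,-\Delta V-a(r)V\rangle
=\langle \Delta U,V\rangle-\langle U,\Delta V\rangle .
\]
Green's second identity rewrites the right-hand side as a boundary integral, and the adjoint boundary operator $B^{*}$ is by construction exactly the one making this boundary term vanish whenever $B(U)=0$ and $B^{*}(V)=0$ (Dirichlet pairs with Dirichlet, Neumann with Neumann, Robin with the same-coefficient Robin condition). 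Hence $G'\equiv0$ on $(s,t)$ and, using continuity of $U$ and $V$ up to the endpoints, $G(s)=G(t)$, that is,
\[
\langle S(t,s)\phi,\psi\rangle=\langle \phi,S^{*}(s,t)\psi\rangle .
\]
As $\phi$ and $\psi$ range over dense subsets, this is precisely $[S(t,s)]^{*}=S^{*}(s,t)$.

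Because the differentiation above is only legitimate for smooth data, I would finish with a density argument: approximate $\phi$ by elements of $D(A)$ and $\psi$ by elements of $D(A^{*})$, invoke the variation-of-constants representations of $S(t,s)$ and $S^{*}(s,t)$ together with the analyticity of the generating semigroups recalled in Section~\ref{sec-application-pdes}, and pass to the limit using continuity of both solution operators and of the pairing.

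The hard part will be exactly this rigour: handling the limited regularity of mild/weak solutions near the endpoints $r=s$ and $r=t$, justifying the integration by parts for all three boundary conditions with the unbounded operators $A$ and $A^{*}$, and choosing spaces in which the duality pairing and Green's formula are simultaneously available --- which is the real reason the Dirichlet case is treated in $E^{\alpha}$ rather than in $C_{0}(\overline{\Omega})$. A convenient shortcut is to first establish the identity at the level of the semigroups, $[e^{A\sigma}]^{*}$ versus its adjoint, where it is classical, and then transfer it through the variation-of-constants formula, so that the whole statement reduces to Fubini's theorem; the technical details are carried out in Mierczyński and Shen \cite[Proposition~2.3.3]{Mierczynski-Shen-02} and Henry \cite{Henry-1}.
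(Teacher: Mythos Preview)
Your proposal is correct in spirit and follows the standard route for establishing identities of the form $[S(t,s)]^*=S^*(s,t)$: introduce the pairing $G(r)=\langle U(r),V(r)\rangle$, differentiate, use Green's identity together with the compatibility of $B$ and $B^*$, and conclude by density. This is indeed essentially the argument behind \cite[Proposition~2.3.3]{Mierczynski-Shen-02}.

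It is worth pointing out, however, that the paper does \emph{not} give its own proof of this proposition at all: it simply refers the reader to \cite[Proposition~2.3.3]{Mierczynski-Shen-02}. So in this case there is nothing to compare against beyond the citation, which you already include. Your write-up therefore goes further than the paper itself, supplying the underlying mechanism that the paper delegates entirely to the reference. The caveats you raise about regularity near the endpoints and the choice of duality pairing are exactly the technical points that make the cited reference necessary, and your suggested shortcut via the semigroup identity and the variation-of-constants formula is a clean way to reduce the work.
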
	
\par The \textbf{dual evolution operator} of the evolution process $\mathcal{S}$ is define as
\begin{equation*}
\widetilde{S}(t,s):=S^*(-t,-s), \ \hbox{ for every } t\geq s.
\end{equation*}
\par Thus $\widetilde{\mathcal{S}}:=\{\widetilde{S}(t,s):\, t\geq s\}$ defines the dual evolution process correspondent to $\mathcal{S}$, and it is associated with the solutions of
\begin{equation*}
\left\{ 
\begin{array}{l l} 
u_t=\Delta u+ a(-t,x) u,& \ t>s, \  x\in \Omega\\
B(u)=0,& \ x\in \partial \Omega 
\end{array} 
\right.
\end{equation*}
\par Similarly as Theorem \ref{th-fundamental-relation-between-type1-type2} we have the following result for the adjoint solution operator.
\begin{theorem}\label{th-fundamental-relation-between-type1-type2-parabolic}
	 Suppose that $\mathcal{S}$ admits a NEDI (NEDII) with bound $M(t)=Me^{\upsilon|t|}$, for $t\in \mathbb{J}$, exponent $\omega>0$, and family of projections $\Pi^u$, for some $M,\upsilon>0$. Then the dual evolution process $\widetilde{\mathcal{S}}$ admits a NEDII (NEDI) with bound $M(t)$ and exponent $\omega>0$, and family of projections
	\begin{equation*}
			\widetilde{\Pi}^u
				:=\{[\Pi^{u}(-t)]^*: t\in \mathbb{R}\}.
			\end{equation*}
\end{theorem}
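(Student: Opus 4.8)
The plan is to mimic the proof of Theorem \ref{th-fundamental-relation-between-type1-type2}, replacing "take the inverse of $S$" by "take the adjoint of the backward solution operator $S^*$" via the Proposition above. By that Proposition, $[S(t,s)]^*=S^*(s,t)$ for $s<t$, so the dual evolution process can be rewritten as
\begin{equation*}
\widetilde{S}(t,s)=S^*(-t,-s)=[S(-s,-t)]^*,\qquad t\ge s .
\end{equation*}
First I would check that $\widetilde{\mathcal{S}}$ is an evolution process on the relevant dual space: $\widetilde{S}(t,t)=Id$ is immediate, and the cocycle identity holds because taking adjoints reverses the order of composition while the reflection $t\mapsto -t$ reverses the direction of time, so the two reversals cancel, namely $\widetilde{S}(t,s)\widetilde{S}(s,\tau)=[S(-s,-t)]^*[S(-\tau,-s)]^*=[S(-\tau,-s)S(-s,-t)]^*=[S(-\tau,-t)]^*=\widetilde{S}(t,\tau)$ for $t\ge s\ge\tau$; continuity is inherited from $\mathcal{S}$ exactly as in the proof of Theorem \ref{th-fundamental-relation-between-type1-type2}.

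Next I would set $\widetilde{\Pi}^u(t):=[\Pi^u(-t)]^*$ and $\widetilde{\Pi}^s(t):=Id-\widetilde{\Pi}^u(t)=[\Pi^s(-t)]^*$, which are complementary continuous projections. The invariance $\widetilde{\Pi}^u(t)\widetilde{S}(t,s)=\widetilde{S}(t,s)\widetilde{\Pi}^u(s)$ is obtained by dualizing the invariance $\Pi^u(-s)S(-s,-t)=S(-s,-t)\Pi^u(-t)$ of $\mathcal{S}$ (valid since $-s\ge -t$). The fact that $\widetilde{S}(t,s)$ restricts to an isomorphism on $R(\widetilde{\Pi}^u(s))$, together with the identification of its inverse, follows, as in Theorem \ref{th-fundamental-relation-between-type1-type2}, from the corresponding property of $S(-s,-t)$ on $R(\Pi^u(-t))$ and the identity $\|[A]^*\|_{\mathcal{L}(X^*)}=\|A\|_{\mathcal{L}(X)}$. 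For the dichotomy estimates, writing $\widetilde{S}(t,s)\widetilde{\Pi}^s(s)=[\Pi^s(-s)S(-s,-t)]^*=[S(-s,-t)\Pi^s(-t)]^*$ yields, for $t\ge s$,
\begin{equation*}
\|\widetilde{S}(t,s)\widetilde{\Pi}^s(s)\|_{\mathcal{L}(X^*)}=\|S(-s,-t)\Pi^s(-t)\|_{\mathcal{L}(X)}\le M e^{\upsilon|-t|}e^{-\omega((-s)-(-t))}=M e^{\upsilon|t|}e^{-\omega(t-s)},
\end{equation*}
where the middle step is the stable NEDI bound of $\mathcal{S}$ with initial time $-t$ and final time $-s$, and similarly, for $t<s$,
\begin{equation*}
\|\widetilde{S}(t,s)\widetilde{\Pi}^u(s)\|_{\mathcal{L}(X^*)}=\|S(-s,-t)\Pi^u(-t)\|_{\mathcal{L}(X)}\le M e^{\upsilon|t|}e^{\omega(t-s)} .
\end{equation*}
Since $|-t|=|t|$, the dependence of the bound on the \emph{initial} time in the NEDI of $\mathcal{S}$ becomes dependence on the \emph{final} time for $\widetilde{\mathcal{S}}$, which is precisely a NEDII; the converse implication NEDII$\Rightarrow$NEDI is the same computation read with $\mathcal{S}$ carrying the final-time bound, so that $|-s|=|s|$ reappears as an initial-time bound for $\widetilde{\mathcal{S}}$.

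The main obstacle is not the sign bookkeeping above but the functional-analytic setting of the adjoint: one must make sure that the identity $\widetilde{S}(t,s)=[S(-s,-t)]^*$, the composition rule, and the isometry $\|[A]^*\|=\|A\|$ are legitimate on the space where $S^*$ lives (the weak solution operator of the backward parabolic problem, in the sense of Mierczyński and Shen), and that the ranges of the dual projections $[\Pi^u(-t)]^*$ are closed and complemented with $\widetilde{S}(t,s)$ restricting to an isomorphism between them. These points are consequences of parabolic smoothing and the cited results on the adjoint problem, after which the statement reduces to the elementary computation displayed above.
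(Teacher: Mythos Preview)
Your proposal is correct and follows essentially the same route as the paper: define $\widetilde{\Pi}^u(t)=[\Pi^u(-t)]^*$, use the identity $\widetilde{S}(t,s)=[S(-s,-t)]^*$ coming from the Proposition on the adjoint solution operator, dualize the invariance relation, construct the inverse on the unstable range via $[S(-t,-s)]^*$, and read off the stable and unstable estimates exactly as you wrote, so that the NEDI bound at initial time $-t$ becomes a NEDII bound at final time $t$. The paper's proof contains precisely these steps (with slightly less detail on the evolution-process axioms and the converse direction, which you spell out), so there is no substantive difference in approach.
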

\begin{proof}
	Assume that $\mathcal{S}$ admits a NEDI, we will show that the dual evolution process $\widetilde{\mathcal{S}}$ admits NEDII. 
	Let $\{\Pi^u(t): t\in\mathbb{R}\}$ be the family of projections that determines a NEDII for $\mathcal{S}$.
	Define $\widetilde{\Pi}^u(t)=\Pi^u(-t)^*$ and $\widetilde{\Pi}^s(t)=Id_X-\widetilde{\Pi}^u(t)$, $t\in \mathbb{R}$.
	It is straightforward to verify that
	\begin{equation*}
	\widetilde{S}(t,s)\widetilde{\Pi}^u(s)=\widetilde{\Pi}^u(t)\widetilde{S}(t,s), \  t\geq s.
	\end{equation*}
	Note that
	\begin{equation}\label{eq-fundamental-relation-betweeen-type1-type2-parabolic-1}
	\|\tilde{S}(t,s)\widetilde{\Pi}^s(s)\|_{\mathcal{L}(X)}	=\|[S(-s,-t)\Pi^s(-t)]^*\|_{\mathcal{L}(X^*)}\leq 
	De^{\nu |t|} e^{-\alpha(t-s)}, \ t\geq s.
	\end{equation}
	It is also straightforward to verify that
	\begin{equation*}
	\widetilde{S}(t,s)\widetilde{\Pi}^u(s)=\widetilde{\Pi}^u(t)\widetilde{S}(t,s), \  t\geq s.
	\end{equation*}
	\par Now, we prove that 
	$\tilde{S}(t,s):R(\widetilde{\Pi}^u(s)) \to R(\widetilde{\Pi}^u(t))$ is an isomorphism, for every $t\geq s$. 
	Indeed, since $S(-s,-t):R(\Pi^u(-t))\to R(\Pi^u(-s))$ is an isomorphism for every $t\geq s$, there exists 
	$S(-t,-s)\in \mathcal{L}(R(\Pi^u(-s)),R(\Pi^u(-t))$ the inverse of $S(-s,-t)$. 
	Hence
	\begin{equation}\label{eq-inverse-adjoint-projection}
	\widetilde{S}(s,t):=[S(-t,-s)]^*\in \mathcal{L}([R(\Pi^u(-t))]^*,[R(\Pi^u(-s))]^*).
	\end{equation} 
	Moreover, $\widetilde{S}(s,t)$ is the inverse of $\tilde{S}(t,s)$, for every $s\leq t$, which completes the prove of the statement.
	\par Finally, similar to the proof of \eqref{eq-fundamental-relation-betweeen-type1-type2-parabolic-1}, we have that
	\begin{equation}\label{eq-fundamental-relation-betweeen-type1-type2-parabolic-2}
	\|\widetilde{S}(t,s)\widetilde{\Pi}^u(s)\|_{\mathcal{L}(X^*)}\leq
	De^{\nu |-t|} e^{\alpha(-s+t)}, \ t\leq s,
	\end{equation}
	and the proof of the Theorem is complete.
\end{proof}
%

\begin{remark}
	Note that the if the stable family of $\mathcal{S}$ is $X^s_I$, then the stable sets for $\widetilde{\mathcal{S}}$ are $X^s_{II}(t)=[X^s_I(-t)]^*$, for every $t\in \mathbb{R}$.
	\par Therefore if $\mathcal{S}$ admits a NEDI with a null unstable family, then 
	$\widetilde{\mathcal{S}}$ posses a NEDII with a null unstable family.
\end{remark}

\section{Final Remarks and Conclusions}

\par Instead of standard nonuniform exponential dichotomies, we could consider a $\rho$-NED, where $\rho$ is an increasing real function such that $\rho(0)=0$, and
$\lim_{t\to \pm \infty}\rho(t)=\pm \infty$. 
The main difference between a NED and a \textit{$\rho$-NEDII} is in the inequalities that measure the hyperbolicity:
\begin{equation*}\label{eq-rho-NEDI}
\begin{split}
&\|T(t,s)\Pi^s(s)\|_{\mathcal{L}(X)}\leq Me^{\delta|\rho(t)|} e^{-\alpha(\rho(t)-\rho(s))}, 
\ \ t\geq s\\
&\|T(t,s)\Pi^u(s)\|_{\mathcal{L}(X)}\leq Me^{\nu|\rho(t)|} e^{\beta(\rho(t)-\rho(s))}.
\ \ t< s,
\end{split}
\end{equation*}
\par Almost every result of this work can be extended for the case of a $\rho$-NED. The only exception is the robustness result for NEDII, Theorem \ref{th-robustness-invertible-NED2}. In fact, at this situation one has to use a robustness result for a 
$\rho$-NED, which can be found in \cite{Barreira-Valls-Robustness-noninvertible}.
\par For the applications on asymptotic behavior, Section \ref{sec-app-odes} and Subsection \ref{subssec-comparison-scalar-pdes}, one has to consider the additional condition that $\rho^\prime>0$ and in the space of non-homogeneous functions the derivative of $\rho$ will appear as an additional weight:
\begin{equation*}
C_{\eta,\rho}(\mathbb{J})=\big\{\,
b:\mathbb{J}\rightarrow \mathbb{R}:\,
\sup_{r\in \mathbb{J}} \big\{e^{-\eta|r|}[\rho^\prime(r)]^{-1}|b(r)|\big\}	<+\infty\,
\big\}.
\end{equation*}
Then all the results of Section \ref{sec-app-odes} and Subsection \ref{subssec-comparison-scalar-pdes} can be extend for the case $\rho$-NEDII, by a change of variables.

\par In our work, the results when $\mathcal{T}$ admits uniform exponential dichotomies, the non-homogeneous function $b$ must be bounded.
For this type of analysis we recommend \cite{Carvalho-Langa-Robison-book} and  \cite{Longo-Novo-Obaya}.

\section{Acknowledgements}

\par This work was started during the visit of Alexandre N. Oliveira-Sousa to
the Department of Applied Mathematics of the University of Valladolid.
He acknowledges the warm atmosphere and encouragement during this stay.
\par We acknowledge the financial support from the following institutions: J.A. Langa by FEDER Ministerio de Econom\'{\i}a, Industria y Competitividad grants PGC2018-096540-B-I00, and Proyecto I+D+i Programa Operativo FEDER Andalucia US-$1254251$; 
R. Obaya by FEDER Ministerio de Econom\'{\i}a y Competitividad
grants MTM2015-66330-P and RTI2018-096523-B-I00 and by Universidad de Valladolid under
project PIP-TCESC-2020; and A.N. Oliveira-Sousa by S\~ao Paulo Research Foundation (FAPESP) grants 2017/21729-0 and 2018/10633-4, and CAPES grant PROEX-9430931/D.

\bibliographystyle{abbrv}
\bibliography{references_NEDII}

\end{document}